\numberwithin{equation}{section}
\renewcommand{\P}{\mathbb{P}}
\newcommand{\ED}{\mathrm{ED}}
\newcommand{\R}{\mathbb{R}}
\newcommand{\N}{\mathbb{N}}
\newcommand{\E}{\mathbb{E}}
\newcommand{\1}{\mathbf{1}}
\newcommand{\dd}{\mathrm{d}}
\newtheorem{thm}{Theorem}[section]
\newtheorem{lemma}[thm]{Lemma}
\newtheorem{corol}[thm]{Corollary}
\theoremstyle{definition}
\newtheorem{defin}[thm]{Definition}
\newtheorem{remark}[thm]{Remark}
\newcommand{\LDM}{\mathrm{LDM}}
\def\ess{{\mathrm{ess}}}
\title[Perpetuities with light tails and the local dependence measure]{Perpetuities with light tails \\and the local dependence measure}
\author{Julia Le Bihan}
\email{julia.le\_bihan.stud@pw.edu.pl}
\author[B. Ko\l{}odziejek]{Bartosz Ko\l{}odziejek}
\email{bartosz.kolodziejek@pw.edu.pl}
\address{Faculty of Mathematics and Information Sciences, Warsaw University of Technology, Koszykowa 75, \mbox{00-662} Warsaw, Poland}
\thanks{ 
BK: This research was funded in part by National Science Centre, Poland, 2023/51/B/ST1/01535.
\\ For the purpose of Open Access, the authors have applied a CC-BY public copyright licence to any Author Accepted Manuscript (AAM) version arising from this submission.}
\keywords{Tail estimates, stochastic fixed point equation, iterated random sequence, perpetuity}
\subjclass{Primary 60H25, 60G10, 37M10; Secondary 60J05}
\begin{document}

\begin{abstract}
This work investigates the tail behavior of solutions to the affine stochastic fixed-point equation of the form $X\stackrel{d}{=}AX+B$, where $X$ and $(A,B)$ are independent. Focusing on the light-tail regime,
following [Burdzy et al. (2022), Ann. Appl. Probab.] we introduce a local dependence measure along with an associated Legendre-type transform. These tools allow us to effectively describe the logarithmic right-tail asymptotics of the solution $X$. 

Moreover, we extend our analysis to a related recursive sequence $X_n=A_n X_{n-1}+B_n$, where $(A_n,B_n)_{n}$ are i.i.d. copies of $(A,B)$. For this sequence, we construct deterministic scaling $(f_n)_{n}$ such that $\limsup_{n\to\infty} X_n/ f_n$ is a.s. positive and finite, with its non-random explicit value provided.
\end{abstract}

\maketitle

\section{Introduction}
We study the tail behavior of a solution $X$ to the stochastic fixed-point equation 
\begin{align}\label{eq:affine}
X\stackrel{d}{=}AX+B,\qquad X\mbox{ and }(A,B)\mbox{ are independent}.
\end{align}
 When such a solution exists, it is referred to as a perpetuity. to Our primary focus is on the light-tail regime. Throughout the paper, we assume that  
$A$ and $B$ are almost surely nonnegative and nonzero with positive probability, thus excluding trivial cases. For comprehensive background material, we refer to \cite{BurBook16}.

The influence of the joint distribution of $(A,B)$ on the tail behavior of $X$ is notably diverse. When conditions ensure a light-tailed solution (see \cite{GG96, HitWes09, Hit10, BK17, BKT22}), the analysis of \eqref{eq:affine} differs fundamentally from that in the heavy-tailed setting (cf. \cite{Kes73, Kev1, Kev2, Gol91, Gri75, DK18}). Roughly, the distinction between the light- and heavy-tailed cases hinges upon whether $\P(A\leq 1)=1$ with light-tailed $B$ or $\P(A>1)>0$. 
In particular, for light-tailed distributions, the dependence structure between $A$ and $B$ significantly affects the tail asymptotics. Conversely, for heavy-tailed solutions, marginal distributions predominantly determine asymptotics, with dependence structure influencing only multiplicative constants (see e.g. \cite{Dysz16, Kev1, DD18}).

Our methodology builds upon the framework established by \cite{BKT22}, which analyzed the left-tail behavior (near $0^+$) of $X$. Following these ideas, we introduce a local dependence measure ($\LDM$) along with its associated Legendre-type transform. These tools form the cornerstone of our analysis and effectively describe the logarithmic right-tail asymptotics of $X$. 

Specifically, we assume the following limit exists for all $y > 0$: (a $\LDM$)
\[
g(y)=\lim_{t\to\infty} \frac{\log\P(A ty + B > t)}{\log\P(B>t)},
\]
where the function $t\mapsto -\log\mathbb{P}(B>t)$ is regularly varying with index $\rho>0$. 
Our first main result (Theorem \ref{theorem4.1}) shows that if a solution to \eqref{eq:affine} exists and the function $g$ satisfies a technical condition (termed admissibility), then 
\[
\lim_{t\to\infty} \frac{\log\P(X>t)}{\log\P(B>t)} = \lambda^\ast,
\]
where $\lambda^\ast$ is the unique nonzero fixed point of the function $\phi_\rho(\lambda) = \inf_{y>0}\{y^\rho \lambda + g(y)\}$. Moreover, we show that $\lambda^\ast=\inf_{y\in(0,1)}\left\{\frac{g(y)}{1-y^\rho}\right\}$.

It is natural, given \eqref{eq:affine}, to consider the recursive sequence
\begin{align}\label{eq:seq}
X_n = A_n X_{n-1} + B_n, \quad n \geq 1,
\end{align}
where $(A_n, B_n)_{n \geq 1}$ are i.i.d. copies of $(A,B)$, and $X_0$ is independent of this sequence. This sequence forms a Markov chain whose stationary distribution solves \eqref{eq:affine}. Our second main result (Theorem \ref{nowe6.1}) identifies an upper envelope for $(X_n)_{n\geq 0}$ when $X_0 = 0$. We construct a deterministic scaling sequence $(f_n)_n$ such that
\[
\limsup_{n\to\infty}\frac{X_n}{f_n}
\]
is almost surely positive and finite, with its deterministic explicit value provided. To the best of our knowledge, this constitutes the first characterization of the upper envelope for a sequence defined by \eqref{eq:seq}. The proof extends techniques from \cite{BKT22}, which derived a lower envelope under related but distinct assumptions.

Our study is inspired by several previous investigations into the tail behavior of $X$. A common strategy in earlier work (cf. \cite{Gri75, Gre94, Palmowski07, BDIM}) involves bounding the tail probability of $X$ from above and below as
\[
\P(X_n>t) \leq \P(X>t)\leq \P(X_n'>t),
\]
where both $(X_n)_{n\geq0}$ and $(X_n')_{n\geq0}$ satisfy \eqref{eq:seq} for some special $X_0$ and $X_0'$. 
Thus, understanding $\mathbb{P}(X>t)$ effectively reduces to analyzing the tail asymptotics of $\mathbb{P}(A Y + B > t)$, with $Y$ independent of $(A,B)$ but not necessarily satisfying \eqref{eq:affine}.

\subsection{Relation to the literature}
We note that similar ideas to the definition of $\LDM$ have appeared in related literature. In \cite[Theorem 1]{BDIM}, for instance, it is assumed that there exists a finite function $f$ such that 
\begin{align}\label{eq:BDIM}
f(y)=\lim_{t\to\infty} \frac{\P(Ay+B>t)}{\P(B>t)}.
\end{align}
If $A$ and $B$ are independent, $A$ has a finite moment generating function, and $t\mapsto \P(B> \log t)$ is regularly varying with index $-\alpha\leq 0$, then by the Breiman Lemma (see e.g.
\cite{Breiman07}) one obtains explicit form of $f(y) = \E[\exp(\alpha y A)]$. However, if $A$ and $B$ are not independent, yet \eqref{eq:BDIM} holds,
the form of $f$ may differ (see \cite[Remark 2.3]{BDIM}). 
Under \eqref{eq:BDIM} with $\P(B>t)\sim a\,t^c e^{-b t}$, $\E[e^{b B}\1_{A=1}]<1$ and some technical assumptions, \cite{BDIM} established that 
\[
\lim_{t\to\infty} \frac{\P(X>t)}{\P(B>t)}= \frac{ \E[f(X)]}{1-\E[e^{b B}\1_{A=1}]}.
\]

A closely related scenario is presented in \cite{DD18}, where it is assumed that $t\mapsto \P(A>t)$ is regularly varying with index $-\alpha<0$, along with the conditions $\E[A^\alpha]<1$ and $\limsup_{t\to\infty}\P(B>t)/\P(A>t)< \infty$. In this setting, $f$ is defined as
\[
f(y) = \lim_{t\to\infty} \frac{\P(Ay+B>t)}{\P(A>t)},
\]
and, under further technical conditions on the distribution of $A$, the tail asymptotics of $X$ are described by 
\[
\lim_{t\to\infty} \frac{\P(X>t)}{\P(A>t)} =\frac{ \E[f(X)]}{1-\E[A^\alpha]}. 
\]
In the special case of independence between $A$ and $B$, explicit computation of $f$ becomes possible.

Thus, our contribution can be viewed as an advancement of this line of research, relaxing independence assumptions and thereby enabling analysis of a broader class of models.

Additionally, logarithmic asymptotics of the tail behavior of $X$ were considered in \cite{BK18}, under the assumption that the functions
\begin{align*}
    t\mapsto -\log\P(1/(1-A)>t)\quad\mbox{and}\quad  t\mapsto -\log\P(B>t)
\end{align*}
are regularly varying (or when either $A$ or $B$ is bounded), again requiring independence of $A$ and $B$. For general dependent $A$ and $B$, however, only a lower bound on the tail was provided. 
As noted, the light tails heavily depend on the dependence structure of $(A,B)$. This relation seems to be captured quite well by a function $h$, which was defined in \cite{BK18} by 
\[
h(t) = \inf_{s\geq 1}\left\{
-s\log\P\left( \frac{1}{1-A}>s, B>\frac{t}{s}\right)
\right\},\qquad t>0. 
\]
There it was shown (see \cite[Theorem 5.1]{BK18}) that
\[
\liminf_{t\to\infty} \frac{\log\P(X>t)}{h(t)} \geq -c
\]
with an explicit constant $c$ depending on the index of regular variation of $h$. In Section \ref{sec:ex} we present an example demonstrating that this lower bound is generally not optimal.

In contrast, our paper addresses this gap by deriving precise logarithmic asymptotics within a general framework built upon the local dependence measure. Our results extend and complement previous findings in light-tailed scenarios where the tail behavior of $X$ is predominantly governed by the distribution of $B$ rather than $A$.

In \cite{BKT22} a less common problem in the theory of \eqref{eq:affine} was considered.  If $A$ and $B$ are nonnegative, then $X$, if exists, is also nonnegative. Thus, one may investigate the tail behavior of $X$ near the left endpoint of the support. 
 One of the main results in \cite{BKT22} establishes that if the limit
\[
g(y) = \lim_{\varepsilon\to 0^+} \frac{\log\P(A \varepsilon y+B<\varepsilon)}{\log\P(B<\varepsilon)}
\]
exists for $y\in[0,\infty)$, where $\varepsilon\mapsto -\log\P(B<\varepsilon)$ is a regularly varying function with index $\rho<0$ at $0$, then---under some technical assumptions (analogous to the admissibility condition in the present paper)---one has
\[
\lim_{\varepsilon\to 0^+} \frac{\log\P(X<\varepsilon)}{\log\P(B<\varepsilon)} = \inf_{y>1}\left\{ \frac{y^\rho}{y^\rho-1} g(y)\right\}. 
\]
Another result in \cite{BKT22} concerns the lower envelope of the sequence \eqref{eq:seq} with $X_0=0$; specifically, there exists an explicit deterministic scaling sequence $(h_n)_n$ such that
\[
\liminf_{n\to\infty} \frac{X_n}{h_n}
\]
is deterministic, positive and finite. 

There are clear analogies between the statements and proofs in the present paper and those in \cite{BKT22}. However, aside from Lemmas \ref{6.10.} and \ref{noweA.1}, which are taken directly from \cite{BKT22}, all other results in our paper have independent proofs. Notably, our analysis reveals quantitatively different behavior for $\rho\in(0,1]$ and $\rho>1$ (see, e.g., Theorem \ref{thm7.2.}), a phenomenon that was not observed in \cite{BKT22}. While \cite{BKT22} applied the left-tail results to a Fleming–Viot-type process, in Section \ref{sec:ex} we illustrate our findings using a new family of distributions. 

We also note that the assumptions in the present paper and in \cite{BKT22} are compatible.  Consequently, one can readily construct a distribution of $(A,B)$ such that both the left and right tail behaviors of $X$ satisfying \eqref{eq:affine} are asymptotically available and the  sequence $X_n=A_n X_{n-1}+B_n$  exhibits explicit lower and upper envelopes. For example, assume that the random pair $(A,B)$ is positively quadrant dependent (see Section \ref{sec:PQD}) and that for $\rho,\sigma>0$ one has
\[
\lim_{t\to\infty}\frac{-\log\P(B>t)}{t^{\rho}} = \lambda_+\qquad\mbox{and}\qquad \lim_{\varepsilon\to0^+}\frac{-\log\P(B<\varepsilon)}{\varepsilon^{-\sigma}}=\lambda_-
\]
together with
\[
a_-=\ess\inf(A)\geq 0\qquad\mbox{and}\qquad a_+=\ess\sup(A)< 1.
\]
Under these conditions, since $\E[\log A]<0$, $\E[\max\{\log B,0\}]<\infty$, 
Theorems \ref{theorem4.1} and \ref{thm7.2.} in this paper together with \cite[Theorem 4.1 and Proposition 5.4]{BKT22} yield
\[
\lim_{t\to\infty}\frac{-\log\P(X>t)}{t^{\rho}} = c_+\qquad\mbox{and}\qquad \lim_{\varepsilon\to0^+}\frac{-\log\P(X<\varepsilon)}{\varepsilon^{-\sigma}}=c_-,
\]
where
\[
c_+ = \lambda_+ \begin{cases}
    \left(1-a_+^{\rho/(\rho-1)}\right)^{\rho-1}, & \mbox{if }\rho>1, \\
    1, & \mbox{if }\rho\in(0,1),
\end{cases} \qquad\mbox{and}\qquad c_- = \lambda_-\left(1- a_-^{\sigma/(1+\sigma)}\right)^{-(1+\sigma)}.
\]
Moreover, if $X_0=0$, then by Theorem \ref{nowe6.1} and \cite[Theorem 6.1]{BKT22}, 
\[
\limsup_{n\to\infty} \frac{X_n}{(\log n)^{1/\rho}}=c_+^{-1/\rho} \qquad \mbox{and}\qquad\liminf_{n\to\infty}  \frac{X_n}{(\log n)^{-1/\sigma}} =c_-^{1/\sigma}.
\]

\subsection{Organization of the paper}
The remainder of the paper is organized as follows. In the next section, we give a short review of the theory of regularly varying functions. Section \ref{sec:LDM} introduces the local dependence measure and its Legendre-type transform, and establishes their key properties. We show that when $A$ and $B$ are independent (or more generally, positively quadrant dependent), the LDM admits an explicit representation. In Section \ref{sec:AX+B}, we derive the asymptotics of $t\mapsto\log \P(AX+B>t)$ under independence between $X$ and $(A,B)$, where $X$ does not necessarily satisfy \eqref{eq:affine}. This is a result that is instrumental in controlling the asymptotics of $\P(X_n>t)$. Section \ref{sec:X=AX+B} discusses the existence, uniqueness, and basic properties of solutions to \eqref{eq:affine}. Section \ref{sec:tailmain} is devoted to our first main result, the precise logarithmic tail asymptotics of the solution to \eqref{eq:affine}. In Section \ref{sec:UpperEnvelope} we prove our second main result: the identification of the upper envelope for the sequence $(X_n)_{n\geq 0}$ with $X_0=0$. Finally, in Section \ref{sec:ex} we describe a family of distributions of pairs $(A,B)$ for which the LDM is computable and provide its general form. We use our earlier results to demonstrate that the lower bound in \cite{BK18} is not optimal in general.

\section{Regular variation}
We write $f(x)\sim g(x)$ if $\lim_{x\to\infty} f(x)/g(x)=1$.
A measurable function $f\colon (0, \infty) \rightarrow (0,\infty)$ is said to be regularly varying of index $\rho\in\R$ if and only if $f(\lambda x)\sim \lambda^\rho f(x)$ for all $\lambda > 0$.    

The class of regularly varying functions of index $\rho \in \R$ is denoted by $\mathcal{R}^{\rho}$. In particular, elements of the class $\mathcal{R}^{0}$ are called slowly varying functions.

We  gather all the necessary properties of regularly varying functions in the lemma below.
\begin{lemma}\label{granica}
Let $f\in\mathcal{R}^\rho$ with $\rho>0$. Then:
\begin{enumerate}
    \item[(i)] $f(x) = x^{\rho}\ell(x)$ for some slowly varying function $\ell\in\mathcal{R}^0$.
    \item[(ii)] $\lim_{x \to\infty}f(x)= \infty$.
    \item[(iii)] For any $C>1$ and $\delta >0$, there exists $K$ such that for $x, y \geq K$, 
\[
\frac{f(y)}{f(x)} \leq C \max \left\{\left(\frac{y}{x}\right)^{\rho+\delta}, \left(\frac{y}{x}\right)^{\rho-\delta}\right\}.
\]
    \item[(iv)] There exists $g\in \mathcal{R}^{1/\rho}$ such that 
    \[
    f(g(x)) \sim g(f(x))\sim x
    \]
    and $g$ is determined uniquely to within asymptotic equivalence $\sim$. 
\item[(v)]  There exists $\tilde{f} \in \mathcal{R}^{\rho}$ such that $\tilde{f}$ continuous, strictly increasing and $\tilde{f}(x) \sim f(x)$.
\end{enumerate}
\end{lemma}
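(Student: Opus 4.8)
The plan is to deduce all five items from two classical tools: Karamata's representation theorem and the uniform convergence theorem for slowly varying functions (see Bingham, Goldie and Teugels, \emph{Regular Variation}, hereafter BGT, Chapter 1). Part (i) is immediate from the definition: setting $\ell(x):=f(x)/x^\rho$, for every $\lambda>0$ one has $\ell(\lambda x)/\ell(x)=\big(f(\lambda x)/f(x)\big)\lambda^{-\rho}\to\lambda^{\rho}\lambda^{-\rho}=1$, so $\ell\in\mathcal{R}^0$. This representation $f(x)=x^\rho\ell(x)$ is then used throughout.

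For (v) I would invoke the representation theorem, writing $f(x)=c(x)\exp\!\left(\int_1^x \eps(t)\,t^{-1}\,\dd t\right)$ with $c(x)\to c\in(0,\infty)$ and $\eps(t)\to\rho$. Replacing $c(x)$ by the constant $c$, and replacing $\eps$ on a bounded initial segment and by $\max\{\eps(t),\rho/2\}$ thereafter, alters neither the limit of $\eps$ nor the asymptotic equivalence class of the resulting function, while forcing the exponential factor to be continuous and strictly increasing; this produces the desired $\tilde f\in\mathcal{R}^\rho$. Part (iii) is precisely Potter's bound (BGT Thm.\ 1.5.6): applying the uniform convergence theorem to $\ell$ on $[1,\infty)$ gives, for any $\delta>0$ and $C>1$, a threshold $K$ with $\ell(y)/\ell(x)\le C\max\{(y/x)^\delta,(y/x)^{-\delta}\}$ whenever $x,y\ge K$, and multiplying by $(y/x)^\rho$ yields the stated inequality. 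Part (ii) follows from the representation theorem as well: for $t$ large $\eps(t)\ge\rho/2>0$, so the integral diverges and $f(x)\to\infty$ (alternatively, from (i) together with the fact that every slowly varying function is $x^{o(1)}$, whence $f(x)\ge x^{\rho/2}$ eventually).

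For (iv) I would take the continuous, strictly increasing $\tilde f\sim f$ furnished by (v), which by (ii) satisfies $\tilde f(x)\to\infty$, and set $g:=\tilde f^{-1}$ on the range of $\tilde f$, extended (say by the generalized inverse) to all of $(0,\infty)$. Then $\tilde f(g(x))\sim x$ and $g(\tilde f(x))=x$ are essentially by construction, and since $\tilde f\sim f$ with both eventually monotone, these upgrade to $f(g(x))\sim g(f(x))\sim x$. To see $g\in\mathcal{R}^{1/\rho}$, start from $\tilde f(\mu x)\sim\mu^\rho \tilde f(x)$ and substitute $x=g(u)$, $u=\tilde f(x)$ to obtain $g(\lambda u)/g(u)\to\lambda^{1/\rho}$ for each $\lambda>0$ (this is BGT Thm.\ 1.5.12). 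Uniqueness up to $\sim$: if $g_1,g_2$ are two such asymptotic inverses, then $f(g_1(x))\sim x\sim f(g_2(x))$, and feeding this through the Potter bound of part (iii) for $f$ forces $g_1(x)/g_2(x)\to 1$.

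The routine parts are (i), (iii) and (ii), which are direct applications of the representation theorem and the uniform convergence theorem. The most delicate bookkeeping is in (v) — checking that the modification of $c$ and $\eps$ in the representation simultaneously preserves the asymptotic equivalence class and achieves strict monotonicity and continuity — and in the uniqueness assertion of (iv), where one must ensure that the Potter estimate is applied on the correct range so that asymptotic inversion is genuinely well-defined up to $\sim$.
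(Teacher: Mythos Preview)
Your proposal is correct and, for (i)--(iv), matches the paper almost exactly: the paper too simply cites BGT for (ii) (Prop.~1.5.1), (iii) (Thm.~1.5.6, Potter bounds), and (iv) (Thm.~1.5.12), and derives (i) from the definition.

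The one genuine difference is in (v). You modify the Karamata \emph{representation} $f(x)=c(x)\exp\bigl(\int_1^x \eps(t)\,t^{-1}\,\dd t\bigr)$, replacing $c(x)$ by its limit and truncating $\eps$ from below. The paper instead uses Karamata's \emph{integral} theorem (BGT Thm.~1.5.11): it picks $X$ large enough that $f$ is locally bounded on $[X,\infty)$, sets $\tilde f(x)=\rho\int_X^x t^{-1}f(t)\,\dd t$ for $x>X$, and extends to $(0,X]$ arbitrarily so as to remain continuous and strictly increasing. This is a little cleaner, because positivity of $f$ gives strict monotonicity of the integral for free, with no need to verify that the modification of $\eps$ on a set of large measure does not disturb the asymptotic equivalence. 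Your route works too, but you should be explicit that $\max\{\eps(t),\rho/2\}$ differs from $\eps(t)$ only on a bounded set (since $\eps(t)\to\rho>\rho/2$), so the exponential factor changes by at most a bounded multiplicative constant.
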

\begin{proof}
    (i) follows directly from the definition of $\mathcal{R}^\rho$. 
(ii) is proved in \cite[Proposition 1.5.1]{BGT89}, 
    (iii) is known as the Potter bounds, \cite[Theorem 1.5.6]{BGT89}. 
    (iv) is proved in \cite[Theorem 1.5.12]{BGT89}.
    For (v), fix $X>0$ so large that $f$ is locally bounded on [X, $\infty$) and define $\tilde{f}(x) = \rho \int_X^x t^{-1}f(t)\dd t$ for $x\in(X,\infty)$. Extend the definition of $\tilde{f}$ to $(0,X]$ in a way that it is continuous and strictly increasing on $(0,\infty)$. By Karamata's Theorem, \cite[Theorem 1.5.11]{BGT89}, we have $\tilde{f}(x)\sim f(x)$. 
\end{proof}

\section{Exponential decay and local dependence measure}\label{sec:LDM}

\begin{defin}
Let $f$ be a function defined on a neighborhood of infinity such that $f(t)\to\infty$ as $t\to\infty$. 
We say that a nonnegative random variable $X$ has an exponential-$f$-decay tail if  
\begin{align*}
\lim_{t\to\infty} \frac{-\log\P(X > t)}{f(t)} = \lambda,
\end{align*}
where $\lambda\in[0,\infty]$. We call such a random variable an $\ED_{f}(\lambda)$-random variable.
\end{defin}

We note that if $X$ is a bounded random variable, then it is $\ED_f(\infty)$ for any $f$. 

\begin{defin}
    Let $(A, B)$ be a pair of nonnegative random variables, and let $f\in\mathcal{R}^\rho$ with $\rho>0$. A function $g\colon [0, \infty) \rightarrow [0, \infty]$ is said to be the local dependence measure of $(A,B)$ (denoted $\LDM_f^\rho$), if the limit
\[
g(y)=\lim_{t\to\infty} \frac{-\log\P(A ty + B > t)}{f(t)},\qquad y\in[0,\infty)
\]
exists.
\end{defin}

In general, for an arbitrary pair $(A,B)$ and a function $f$, the existence of $g$ on $[0,\infty)$ is not guaranteed. Nevertheless,  we will show that $g(y) = 0$ for $y>a_+^{-1}$, where 
\[
a_+ = \ess\sup(A).
\]
Furthermore, if $A$ and $B$ are positively quadrant dependent (PQD) and $g(0)<\infty$, then $g$ exists on $[0,\infty)$ and its explicit form can be derived---it depends only on $a_+$ and $g(0)$ ( see Section \ref{sec:PQD} for details). Another family of distributions for which an explicit expression for $g$ is available is presented in Section \ref{sec:ex}. 

Assume that $\tilde{f} \in \mathcal{R}^{\rho}$ is such that $\tilde{f}$ continuous, strictly increasing and $\tilde{f}(x) \sim f(x)$.
Clearly, we have $\ED_f(\lambda) = \ED_{\tilde{f}}(\lambda)$ and $\LDM_f^\rho = \LDM_{\tilde{f}}^\rho$. In view of Lemma \ref{granica}, without loss of generality, we will assume from now on that every regularly varying function $f$ is continuous and strictly increasing.

\begin{lemma}\label{3}
Assume that $(A,B)$ are nonnegative random variables and $f \in \mathcal{R}^{\rho}$ with $\rho>0$.
Suppose $g(0)$ exists and $a_+>0$. Then:
\begin{enumerate}
\item[(i)] For every $y>a_+^{-1}$,  \[
\lim_{t\to\infty} \frac{-\log\P(A ty + B > t)}{f(t)}=0.
\]
\item[(ii)] For every $y\in[0,a_+^{-1})$,
\[
\liminf_{t\to\infty} \frac{-\log\P(A ty + B > t)}{f(t)}\geq g(0)(1-a_+ y)^\rho.
\]
\item[(iii)] If $y > x \geq 0$ and  both $g(x)$ and $g(y)$ exists,  then $g(y)\leq g(x)$, 
\end{enumerate}
\end{lemma}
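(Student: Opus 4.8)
The plan is to treat the three items in the order $(i)$, $(ii)$, $(iii)$, using elementary probability inequalities together with the regular variation of $f$; the properties of Lemma \ref{granica}, especially the Potter bounds (iii) and the fact that $f(t)\to\infty$ (ii), will do most of the work once the events are compared correctly.

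For (i), fix $y>a_+^{-1}$ and pick $y'$ with $a_+^{-1}<y'<y$. Since $a_+=\ess\sup(A)$, we have $\P(A\geq 1/y')>0$; call this probability $c>0$. On the event $\{A\geq 1/y'\}$ we have $Aty\geq t y/y' > t$ for all $t>0$ (because $y/y'>1$), hence also $Aty+B>t$. Therefore $\P(Aty+B>t)\geq c$ for every $t$, so $-\log\P(Aty+B>t)\leq -\log c$ is bounded, and dividing by $f(t)\to\infty$ gives the limit $0$. I would state this cleanly as: $\liminf$ of the probability is positive, hence $\limsup$ of $-\log(\cdot)/f(t)$ is $0$, and the quantity is nonnegative.

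For (ii), fix $y\in[0,a_+^{-1})$ and let $\delta>0$ be small. Choose $a<a_+$ close enough to $a_+$ that $ay<1$; by definition of $\ess\sup$, $\P(A\leq a)>0$, but I actually want a lower bound on $\P(Aty+B>t)$ in terms of $\P(B>\text{something})$. The key observation is the inclusion
\[
\{A\text{ small}\}\cap\{B>t(1-ay)\}\ \subseteq\ \{Aty+B>t\}
\]
more precisely $\{A\le a,\ B>t(1-ay)\}\subseteq\{Aty+B>t\}$, but to relate this to $g(0)$ I instead compare directly with the event defining $g(0)$, namely $\{B>t\}$, after rescaling $t$. The cleanest route: for any $a>0$ with $ay<1$, we have $\{Aty+B>t\}\supseteq\{A\le a\}\cap\{B>(1-ay)t\}$ is not directly what I want either, since $g(0)=\lim -\log\P(B>t)/f(t)$ requires independence-free handling. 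Let me restructure: write $s=(1-a_+y)t$ (slightly less, using $a_+-\eta$ to stay safe near the essential supremum). For $t$ large and any $y''<y$ with $A\cdot y''\cdot t \le $ appropriate bound, one gets $\P(Aty+B>t)\ge \P(B>t, A\ge 0)$ trivially is too weak; rather $\P(Aty+B>t)\ge \P(Aty\ge \eta t)+\dots$. The honest plan is: pick $0<a<a_+^{-1}\wedge(1/y)$ — but $a_+<\infty$ need not hold, so instead pick a threshold $a'\in(a_+,\infty]$ only used in (i); for (ii) use $\P(Aty+B>t)\ge \P(B>t(1-ay))\,\P(A\le a \mid B>t(1-ay))$ which fails without independence. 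Since the statement does NOT assume independence, the right tool is: $\{Aty+B>t\}\supseteq\{B>t\}$ is true only when... no. I will instead use $\P(Aty+B>t)\geq\P(A ty + B>t,\ A\leq \delta')\geq \P(B>t(1-\delta' y),\ A\le\delta')$ and then, crucially, bound $\P(B>t(1-\delta'y),A\le\delta')\ge \P(B>t(1-\delta'y)) - \P(A>\delta')$; choosing $\delta'$ so small that $\P(A>\delta')<\tfrac12\P(B>t(1-\delta'y))$ eventually — this needs $\P(A>\delta')\to$ comparable... This only works if $a_-=\ess\inf A=0$. Given the generality, the intended argument must instead go through $g(0)$ abstractly: since $g(0)=\lim_{t\to\infty}-\log\P(B>t)/f(t)$, for large $s$, $\P(B>s)=e^{-(g(0)+o(1))f(s)}$, and $\{Aty+B>t\}\supseteq\{B>t\}$ is false, but $\{Aty+B>t\}\supseteq\{A y t + B > t\}$ trivially; the correct containment exploiting $a_+$ is $\{Aty+B>t\}\supseteq\{B>(1-ay)t\}\cap\{A\ge ???\}$. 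I expect the actual proof uses: $\P(A ty+B>t)\ge \P(B>(1-\kappa)t)$ whenever $\kappa> ??? $ — specifically if we also know $a_+y<1$ then on a positive-probability event $A$ is near $a_+$ so $Aty$ near $a_+ y t$, leaving $B>(1-a_+y)t$ suffices. So: $\{A> a_+-\eta\}\cap\{B>(1-(a_+-\eta)y)t\}\subseteq\{Aty+B>t\}$, and $\P$ of the left is $\ge \P(B>(1-(a_+-\eta)y)t)-\P(A\le a_+-\eta)$; this still requires $\P(A\le a_+-\eta)$ small, i.e. $A$ concentrated near $a_+$, which is NOT generally true.

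The main obstacle, therefore, is handling (ii) without independence; I expect the real argument instead only claims the liminf bound and obtains it from a clean containment that I am mis-remembering — likely $\{B>t\}\subseteq\{Aty+B>t\}$ is replaced by the scaling $g(y)\le g(0)$-type monotonicity plus a change of variables $t\mapsto t/(1-a_+y)$ using that $\P(Aty+B>t)\ge\P(A\cdot\tfrac{t}{1-a_+y}\cdot y+B>\tfrac{t}{1-a_+y},\ldots)$. Concretely: for fixed $y<a_+^{-1}$, and for every $\eta>0$, choosing $t'=t/(1-(a_++\eta\text{-truncation})y)$ one compares $\P(Aty+B>t)$ with $\P(B>t')$ via the inequality $Aty + B > t \Leftarrow B> t - Aty \Leftarrow B> t(1-a_+y)$ ON $\{A\le a_+\}=\Omega$ a.s.! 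That is the point: $A\le a_+$ almost surely, so $Aty\le a_+yt$ a.s., hence $\{B>t(1-a_+y)\}\subseteq\{B>t-Aty\}=\{Aty+B>t\}$ almost surely. Thus $\P(Aty+B>t)\ge\P(B>t(1-a_+y))$, and taking $-\log$, dividing by $f(t)$, and using $f(t(1-a_+y))\sim (1-a_+y)^\rho f(t)$ from regular variation gives exactly $\liminf\ge g(0)(1-a_+y)^\rho$. That is the whole argument — clean, no independence needed. For (iii), monotonicity: if $y>x\ge0$ then $Aty\ge Atx$ a.s. (as $A\ge0$), so $\{Atx+B>t\}\subseteq\{Aty+B>t\}$, giving $\P(Aty+B>t)\ge\P(Atx+B>t)$, hence $-\log\P(Aty+B>t)\le-\log\P(Atx+B>t)$; dividing by $f(t)$ and passing to the limit (both exist by hypothesis) yields $g(y)\le g(x)$. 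So after the correction, none of the three parts is genuinely hard — the only subtlety is remembering that $A\le a_+$ a.s. is precisely what powers (i) and (ii), and I would present (iii) first as a warm-up, then (i), then (ii).
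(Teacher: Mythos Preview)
Your arguments for (i) and (iii) are correct and match the paper's proof essentially verbatim. The problem is in (ii): your final inclusion is backwards. From $A\le a_+$ a.s.\ you correctly deduce $Aty\le a_+yt$, hence $t-Aty\ge t(1-a_+y)$. But this means the threshold $t-Aty$ is \emph{larger} than $t(1-a_+y)$, so the event $\{B>t-Aty\}$ is \emph{smaller} than $\{B>t(1-a_+y)\}$. The correct containment is therefore
\[
\{Aty+B>t\}=\{B>t-Aty\}\subseteq\{B>t(1-a_+y)\},
\]
giving $\P(Aty+B>t)\le\P(B>t(1-a_+y))$, not $\ge$. This is exactly what the paper does: the upper bound on the probability becomes, after taking $-\log$ and dividing by $f(t)$, the desired \emph{lower} bound $\liminf\ge g(0)(1-a_+y)^\rho$.

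Your long detour looking for a lower bound on $\P(Aty+B>t)$ was therefore aimed at the wrong inequality from the start; you only recovered the correct conclusion through a compensating sign slip after taking $-\log$. Once you reverse the containment, the argument is one line and identical to the paper's.
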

\begin{proof}
\begin{enumerate} 
        \item[(i)] Assume $y>a_+^{-1}$. Clearly, $\P(A>1/y)>0$. Notice that $\{A>1/y\} = \{A t y>t\}\subset\{A t y+B>t\}$ for $t>0$ and therefore, 
        \begin{multline*}
            0 \leq \liminf_{t\to\infty} \frac{-\log\P(A ty + B > t)}{f(t)}\\ \leq \limsup_{t\to\infty} \frac{-\log\P(A ty + B > t)}{f(t)} 
             \leq \lim_{t\to\infty} \frac{-\log\P(A > 1/y)}{f(t)}=0.
        \end{multline*}
        \item[(ii)] Since $\P(A \leq a_+) = 1$, we have
        \begin{align*}
            \P(Aty + B >t) \leq \P(a_+ty+B>t) = \P(B>t(1-a_+y)) .
        \end{align*}
        Thus, for $y \in [0, 1/a_+)$, 
        \begin{align*}
            \liminf_{t\to\infty} \frac{-\log\P(A ty + B > t)}{f(t)} 
            &\geq  \lim_{t \to\infty} \frac{-\log \P(B>t(1-a_+y))}{f(t)}  \\
            & =
            \lim_{t \to\infty} \frac{-\log \P(B>t(1-a_+y))}{f(t(1-a_+y))}\cdot \frac{f(t(1-a_+y))}{f(t)} \\
            &= g(0)(1-a_+y)^{\rho}.
        \end{align*}
        \item[(iii)] Let $y>x\geq 0$. We note that since $A$ is a nonnegative random variable, for all $t>0$, we have the inclusion $\{ Atx + B > t \} \subset \{ Aty + B > t\}$. By the monotonicity of $\P$ and the assumption that both $g(x)$ and $g(y)$ are well defined we get that $g(y) \leq g(x)$.
\end{enumerate}
\end{proof}
\begin{remark}\label{uwaga_do_3}
Lemma \ref{3} implies several key properties of the local dependence measure $g$: 
\begin{itemize} 
\item On the interval $(a_+^{-1},\infty)$, the limit always exists and equals $0$. 
\item If $g$ is well defined on $[0,a_+^{-1})$, then it is bounded below by $g(0)(1-a_+y)^\rho$ on this interval. 
\item If $g$ exists on the entire $[0,\infty)$, then it is a nonincreasing function. \end{itemize} 
In Section \ref{sec:PQD}, we show that the lower bound is attained for positively quadrant dependent $A$ and $B$. 
\end{remark}

\begin{defin}
For a function $g\colon (0,\infty) \to [0,\infty]$, let $\phi_\rho\colon[0,\infty)\to[0,\infty]$ be a Legendre-type transform defined by 
\[
\phi_{\rho}(\lambda) = \inf_{y > 0} \left\{y^{\rho}\lambda+g(y)\right\},\qquad \lambda\in[0,\infty).
\]
\end{defin}

From this point forward, unless explicitly stated otherwise, we assume that $g$ exists on $[0,\infty)$.

\begin{lemma}\label{lem:phi-prop}\ 
Let $g$ be the $\LDM_f^\rho$ for the nonnegative random variables $(A,B)$ such that $a_+=\ess\sup(A)>0$, where $f \in \mathcal{R}^{\rho}$ and $\rho>0$. Let $\phi_\rho$ be its Legendre-type transform. Then:
\begin{enumerate}
\item[(i)] $\phi_\rho$ is finite, nondecreasing, concave. 
\item[(ii)] $\phi_\rho(0)=\phi_\rho(0^+)=0$.
\item[(iii)] $\phi_\rho$ is continuous.
\item[(iv)] $\sup_{y>0}\left\{g(y)\right\} 
    \geq \phi_{\rho}(\lambda)$.
\item[(v)] If there exists $\lambda > 0$ such that $\phi_{\rho}(\lambda) > \lambda$, then $\phi_{\rho}(\lambda) = \lambda$ for at most one $\lambda > 0$. 
\item[(vi)] With $\gamma=g(0)$ and  $a_+=\ess\sup(A)>0$,
    \begin{align*}
\lambda a^{-\rho}\geq \phi_{\rho}(\lambda) \geq  
\begin{cases}
\left(\gamma^{1/(1-\rho)}+a_+^{\rho/(\rho-1)}\lambda^{1/(1-\rho)}\right)^{1-\rho},&  \mbox{ if } \rho>1,\\
    \min\left\{\gamma, \lambda a_+^{-\rho}\right\},              &  \mbox{ if } 0<\rho \leq 1.
\end{cases}
\end{align*}
\end{enumerate}
\end{lemma}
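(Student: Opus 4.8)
The plan is to exploit throughout that $\phi_\rho(\lambda) = \inf_{y>0}\{y^\rho\lambda + g(y)\}$ is an infimum of a family of functions $\lambda \mapsto y^\rho\lambda + g(y)$ that are affine and nondecreasing in $\lambda$, together with the structural facts about $g$ already available: by Lemma \ref{3} and Remark \ref{uwaga_do_3}, $g$ is nonincreasing on $[0,\infty)$, vanishes on $(a_+^{-1},\infty)$, and satisfies $g(y)\geq g(0)(1-a_+y)^\rho$ on $[0,a_+^{-1})$. For (i): finiteness follows by plugging any fixed $y_0 > a_+^{-1}$ into the infimum, giving $\phi_\rho(\lambda)\leq y_0^\rho\lambda < \infty$; monotonicity is immediate since each affine piece is nondecreasing in $\lambda$ (as $y^\rho>0$); concavity is the standard fact that a pointwise infimum of affine functions is concave. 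For (ii): $\phi_\rho(0)=\inf_{y>0}g(y)=0$ since $g\equiv 0$ on $(a_+^{-1},\infty)$; the right-continuity $\phi_\rho(0^+)=0$ then follows from concavity plus monotonicity plus $\phi_\rho\geq 0$ (a concave nonnegative function on $[0,\infty)$ with value $0$ at $0$ is automatically continuous from the right at $0$, or one argues directly: $0\leq \phi_\rho(\lambda)\leq y_0^\rho\lambda\to 0$). Part (iii) is then free: a finite concave function on $[0,\infty)$ is continuous on the open interval $(0,\infty)$, and continuity at $0$ is (ii).

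For (iv), I would observe that for any fixed $y$, $\phi_\rho(\lambda)\leq y^\rho\lambda + g(y)$, but to get the stated bound $\sup_{y>0}g(y)\geq \phi_\rho(\lambda)$ I instead send the infimizing variable to $0$: since $g$ is nonincreasing, $\sup_{y>0}g(y) = g(0^+) = \lim_{y\to 0^+} g(y)$, and for any $\eps>0$ and small $y$, $\phi_\rho(\lambda)\leq y^\rho\lambda + g(y)\to g(0^+)$ as $y\to 0^+$, so $\phi_\rho(\lambda)\leq g(0^+)\leq \sup_{y>0}g(y)$. (Here one should be slightly careful if $g(0^+)=\infty$, in which case the bound is trivial.) Part (v) is a soft consequence of (i): suppose $\phi_\rho(\lambda_0)>\lambda_0$ for some $\lambda_0>0$. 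The map $\psi(\lambda):=\phi_\rho(\lambda)-\lambda$ is concave (concave minus linear), with $\psi(0)=0$ and $\psi(\lambda_0)>0$. A concave function that is $0$ at the left endpoint and strictly positive at an interior point can return to $0$ at most once thereafter — more precisely, $\{\psi = 0\}\cap(0,\infty)$ has at most one point, because if $\psi$ vanished at two points $\lambda_1<\lambda_2$ in $(0,\infty)$ then by concavity $\psi$ would be $\leq 0$ throughout $[\lambda_2,\infty)$ and $\geq 0$ throughout... — the clean way is: if $\psi(\lambda_1)=\psi(\lambda_2)=0$ with $0<\lambda_1<\lambda_2$, then by concavity on $[0,\lambda_2]$, $\psi(\lambda_1)\geq \frac{\lambda_2-\lambda_1}{\lambda_2}\psi(0)+\frac{\lambda_1}{\lambda_2}\psi(\lambda_2)=0$ with equality forcing $\psi$ affine on $[0,\lambda_2]$, hence $\psi\equiv 0$ there, contradicting $\psi(\lambda_0)>0$ for the appropriate placement of $\lambda_0$. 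One just needs to arrange the argument so $\lambda_0$ is used to rule out the degenerate affine case.

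Part (vi) is the computational heart and the main obstacle. The easy upper bound $\phi_\rho(\lambda)\leq \lambda a_+^{-\rho}$ comes from taking $y=a_+^{-1}$ in the infimum, using $g(a_+^{-1})\leq g(0^+)$... no — better, using $g(a_+^{-1})=0$ (it lies at the boundary; if $g$ is not $0$ exactly there one takes $y\downarrow a_+^{-1}$ and uses continuity/the value $0$ on $(a_+^{-1},\infty)$, so $\phi_\rho(\lambda)\leq \inf_{y>a_+^{-1}}y^\rho\lambda = \lambda a_+^{-\rho}$). For the lower bound I would use $g(y)\geq \gamma(1-a_+y)^\rho$ for $y\in[0,a_+^{-1})$ and $g(y)\geq 0$ for $y\geq a_+^{-1}$, so
\[
\phi_\rho(\lambda)\geq \min\Big\{\inf_{0\leq y< a_+^{-1}}\big(y^\rho\lambda + \gamma(1-a_+y)^\rho\big),\ \inf_{y\geq a_+^{-1}} y^\rho\lambda\Big\}.
\]
The second infimum is $\lambda a_+^{-\rho}$. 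For the first, substitute $u = a_+ y\in[0,1)$ to minimize $a_+^{-\rho}\lambda u^\rho + \gamma(1-u)^\rho$; when $0<\rho\leq 1$ the function $u\mapsto u^\rho$ is concave so $u^\rho + (1-u)^\rho\geq 1$ type reasoning gives that the infimum over $[0,1]$ of a sum of two such concave terms is attained at an endpoint, yielding $\min\{\gamma,\ \lambda a_+^{-\rho}\}$. When $\rho>1$ the terms are convex, so calculus applies: set the derivative to zero, solve for the optimal $u^\ast$, and plug back; the Hölder-type algebra $\min_u(\alpha u^\rho + \beta(1-u)^\rho) = (\alpha^{1/(1-\rho)}+\beta^{1/(1-\rho)})^{1-\rho}$ with $\alpha = a_+^{-\rho}\lambda$, $\beta=\gamma$ produces exactly $(\gamma^{1/(1-\rho)} + a_+^{\rho/(\rho-1)}\lambda^{1/(1-\rho)})^{1-\rho}$, and one checks this is $\leq \lambda a_+^{-\rho}$ so the overall minimum with the second branch is the stated expression. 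The delicate points are handling the boundary value of $g$ at $a_+^{-1}$ cleanly, and, in the $\rho>1$ case, verifying the minimizer $u^\ast$ indeed lies in $(0,1)$ so the interior critical point is legitimate; these I expect to be the only places requiring genuine care.
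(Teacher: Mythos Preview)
Your proposal is correct and matches the paper's approach throughout; for (vi) the paper simply defers to the computation in the proof of Theorem~\ref{thm7.2.}, which is exactly the minimization of $\lambda y^\rho + \gamma(1-a_+y)^\rho$ you outline (done there by derivative analysis rather than your concavity-at-endpoints shortcut for $\rho\le 1$, but with identical content). To close the acknowledged loose end in (v), note that concavity together with $\psi(0)=\psi(\lambda_2)=0$ also forces $\psi\le 0$ on $[\lambda_2,\infty)$, so $\psi\le 0$ everywhere and the contradiction with $\psi(\lambda_0)>0$ holds regardless of where $\lambda_0$ lies.
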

\begin{proof}
\begin{enumerate}
\item[(i)] Finiteness follows from Lemma \ref{3} (i), monotonicity is obvious, $\phi_\rho$ as the point-wise infimum of a family of affine functions is concave.
\item[(ii)] We have $\phi_\rho(0) = \inf_{y > 0} \left\{g(y)\right\}=0$  by Lemma \ref{3} (i). Moreover, by the definition of $\phi_{\rho}$ we get 
$\phi_{\rho}(\lambda) \leq y^{\rho}\lambda + g(y)$ for any $y>0$. Thus, by taking $y$ sufficiently large (recall Lemma \ref{3} (i)) and letting $\lambda\downarrow0^+$, we obtain $\phi_{\rho}(0^+)\leq0$. 
\item[(iii)] From (ii) we get continuity in $0$. Continuity on $(0, \infty)$ is a consequence of the fact that $\phi_{\rho}$ is concave, which was stated in (i).
\item[(iv)] By the definition of $\phi_{\rho}(\lambda)$, we have $\phi_{\rho}(\lambda) \leq y^{\rho}\lambda+g(y)$ for all $y>0$. Letting $y \downarrow 0^{+}$ (noting that $g$ is a nonincreasing function, so its limit as $y\to 0^{+}$ exists, although it may be infinite), we obtain $\phi_{\rho}(\lambda) \leq g(0^{+}) = \sup_{y>0}\{g(y)\}$.
\item[(v)] Define $\psi(\lambda)=\phi_\rho(\lambda)-\lambda$. Then $\psi$ is concave, continuous, and $\psi(0)=0$. Suppose there exist $0<\lambda_1<\lambda_2$ with $\phi_\rho(\lambda_1)=\lambda_1$ and $\phi_\rho(\lambda_2)=\lambda_2$, so that $\psi(0)=\psi(\lambda_1)=\psi(\lambda_2)=0$. By concavity, $\psi\equiv0$ on $[0,\lambda_2]$ and $\psi(\lambda)\leq 0$ for $\lambda>\lambda_2$. Now, if some $\lambda_0$ satisfies $\phi_\rho(\lambda_0)>\lambda_0$ (i.e., $\psi(\lambda_0)>0$), then we obtain a contradiction. Hence, the equation $\phi_\rho(\lambda)=\lambda$ can have at most one solution for $\lambda>0$.
\item[(vi)] The upper and lower bounds on $\phi_\rho(\lambda)$ are based on properties (i) and (ii) from Lemma \ref{3}. The derivation of these bounds follows from the proof of Theorem \ref{thm7.2.}.
\end{enumerate}
\end{proof}

\begin{defin}
    We say that a $\LDM_f^\rho$ function $g$ is admissible if either:
there exists $\lambda>0$ such that $\phi_\rho(\lambda)>\lambda$ or $g(0)=0$ (in which case $\phi_\rho\equiv 0$).
\end{defin}

The notion of admissibility is introduced to exclude those distributions of $(A,B)$ and choices of the function $f$ for which our analytical framework fails (see Lemma \ref{lemma4.3}). To illustrate, consider the following example. Assume that the nonnegative random variables $A$ and $B$ are independent, $a_+=\ess\sup(A)\in(0,1]$ and $f(t)=-\log\P(B>t)$ is regularly varying with index $-\rho<0$. Then, by Theorem \ref{thm7.2.}, $g$ exists and is given by $g(y) = \max\{1-a_+ y,0\}^\rho$ for $y\geq 0$. In this case, one can verify that the admissibility condition holds.

In contrast, consider the same model but with the choice  $f(t)= t^{\sigma}$ for some $\sigma\in(0,\rho)$. In this setting, the $\LDM$ is given by  
\[
g(y)=\lim_{t\to\infty} \frac{-\log\P(A ty + B > t)}{t^\sigma}= \begin{cases}
\infty, & \mbox{ if }y< a_+^{-1},\\
0, & \mbox{ if }y>a_+^{-1},
\end{cases}
\]
for any $\sigma\in(0,\rho)$. A direct calculation shows that the corresponding Legendre-type transform satisfies  $\phi_\sigma(\lambda)= \lambda$ for all $\lambda>0$, thereby violating the condition  $\phi_\sigma(\lambda)>\lambda$.   Consequently, this LDM is non-admissible.

\begin{defin}
For a $\LDM_f^\rho$ function $g$, we define
\begin{align}\label{eq:lambdastar}
\lambda^\ast = \inf_{y\in(0,1)}\left\{\frac{g(y)}{1-y^{\rho}}\right\}.
\end{align}
\end{defin}

\begin{lemma}\label{nowe3.15}\ 
\begin{enumerate}
    \item[(i)]Assume $c \geq 0$. Then, $\phi_{\rho}(c) \geq c$ if and only if $c \leq \lambda^\ast$. 
    \item[(ii)] If $\lambda^\ast<\infty$, then $\phi_\rho(\lambda^\ast)=\lambda^\ast$.
    \item[(iii)]  With $\gamma=g(0)$ and  $a_+=\ess\sup(A)>0$,
    \[
\lambda^\ast \geq 
\begin{cases}
 \gamma\left(1-a_+^{\rho/(\rho-1)}\right)^{\rho - 1},&  \mbox{ if } \rho>1,\\
    \gamma,              &  \mbox{ if } 0<\rho \leq 1.
\end{cases}    \]
\item[(iv)]  $\lambda^\ast=\infty\iff g(0)=\infty$.
\end{enumerate}
\end{lemma}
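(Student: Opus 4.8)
The plan is to treat the four parts roughly in the order (iii)/(iv), then (i), then (ii), exploiting the concavity and continuity of $\phi_\rho$ established in Lemma \ref{lem:phi-prop} and the monotonicity of $g$ from Lemma \ref{3}(iii). For part (iii), I would directly bound the quotient $g(y)/(1-y^\rho)$ from below: since $g$ is nonincreasing, $g(y)\geq g(0)=\gamma$ for all $y\in(0,1)$, which immediately gives $\lambda^\ast\geq \gamma\cdot \inf_{y\in(0,1)}(1-y^\rho)^{-1}=\gamma$ whenever that infimum is $1$, i.e. when $0<\rho\leq 1$ (here $1-y^\rho\leq 1$). For $\rho>1$ one must do better, because $1-y^\rho$ can exceed $1$ for $y$ close to $1$; instead I would use the sharper lower bound $g(y)\geq \gamma(1-a_+y)^\rho$ for $y\in(0,a_+^{-1})$ from Lemma \ref{3}(ii) (recalling $a_+<1$ is not assumed, but $g(y)=0$ for $y>a_+^{-1}$ only forces us to restrict to $(0,\min\{1,a_+^{-1}\})$), and then minimize the explicit function $y\mapsto (1-a_+y)^\rho/(1-y^\rho)$ over $y\in(0,1)$ by calculus; the critical point calculation yields the stated constant $\left(1-a_+^{\rho/(\rho-1)}\right)^{\rho-1}$. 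I expect this optimization to be the main computational obstacle, and I would note (as the lemma's statement of Lemma \ref{lem:phi-prop}(vi) already hints) that the same computation underlies the bound there, so it can be done once and reused.

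For part (iv), the equivalence $\lambda^\ast=\infty\iff g(0)=\infty$ follows from (iii): if $g(0)=\infty$ then the lower bound in (iii) is $\infty$, forcing $\lambda^\ast=\infty$; conversely, if $g(0)=\gamma<\infty$, then picking any fixed $y_0\in(0,1)$ with $g(y_0)\leq \gamma<\infty$ (monotonicity) gives $\lambda^\ast\leq g(y_0)/(1-y_0^\rho)<\infty$.

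For part (i), I would argue via the two inequalities. First suppose $c\leq\lambda^\ast$. Then for every $y\in(0,1)$ we have $c\leq g(y)/(1-y^\rho)$, i.e. $y^\rho c+g(y)\geq c$; for $y\geq 1$ the inequality $y^\rho c+g(y)\geq c$ is trivial since $y^\rho\geq 1$ and $g\geq 0$. Taking the infimum over all $y>0$ gives $\phi_\rho(c)\geq c$. Conversely, suppose $\phi_\rho(c)\geq c$. Then $y^\rho c+g(y)\geq c$ for all $y>0$, so for $y\in(0,1)$, $g(y)\geq c(1-y^\rho)$, hence $g(y)/(1-y^\rho)\geq c$; taking the infimum over $y\in(0,1)$ yields $\lambda^\ast\geq c$. (The case $c=0$ is immediate from $\phi_\rho(0)=0$ in Lemma \ref{lem:phi-prop}(ii).)

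Finally, for part (ii), assume $\lambda^\ast<\infty$. By part (i), $\phi_\rho(\lambda^\ast)\geq \lambda^\ast$. For the reverse inequality I would use continuity and monotonicity of $\phi_\rho$ together with part (i): for any $c>\lambda^\ast$ part (i) gives $\phi_\rho(c)<c$; letting $c\downarrow\lambda^\ast$ and using that $\phi_\rho$ is continuous (Lemma \ref{lem:phi-prop}(iii)) yields $\phi_\rho(\lambda^\ast)=\lim_{c\downarrow\lambda^\ast}\phi_\rho(c)\leq \lim_{c\downarrow\lambda^\ast} c=\lambda^\ast$. Combining the two inequalities gives $\phi_\rho(\lambda^\ast)=\lambda^\ast$. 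The only subtlety here is ensuring $\lambda^\ast>0$ so that ``$c>\lambda^\ast$'' is a meaningful limit from the right within $(0,\infty)$; if $\lambda^\ast=0$ then $g(0)=0$ by a version of (iii)/(iv) in the degenerate direction (admissibility's second branch), and $\phi_\rho\equiv 0$ makes the claim trivial. I expect part (iii)'s optimization to be the real work; parts (i), (ii), (iv) are short once the concavity/continuity toolkit from Lemma \ref{lem:phi-prop} is in hand.
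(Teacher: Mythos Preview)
Your arguments for parts (i), (ii), and (iv) are correct and essentially identical to the paper's: the chain of equivalences for (i), the continuity/limit argument for (ii), and the two-direction check for (iv) all match.

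There is a genuine slip in your treatment of part (iii) for $0<\rho\leq 1$. You write ``since $g$ is nonincreasing, $g(y)\geq g(0)=\gamma$ for all $y\in(0,1)$'', but nonincreasing means exactly the opposite: $g(y)\leq g(0)$ for $y>0$. So the naive monotonicity bound goes the wrong way and cannot give $\lambda^\ast\geq\gamma$. (Relatedly, your remark that ``$1-y^\rho$ can exceed $1$ for $y$ close to $1$'' is also off: for $y\in(0,1)$ one always has $y^\rho\in(0,1)$ and hence $1-y^\rho\in(0,1)$, regardless of $\rho$.) The fix is simply to use the same tool you already propose for $\rho>1$: Lemma \ref{3}(ii) gives $g(y)\geq\gamma(1-a_+y)^\rho$ on $[0,a_+^{-1})$, so
\[
\lambda^\ast \;\geq\; \inf_{y\in(0,1)}\frac{\gamma(1-a_+y)^\rho}{1-y^\rho},
\]
and then one checks (by the same calculus as in Theorem \ref{thm7.2.}) that this infimum equals $\gamma$ when $0<\rho\leq 1$ and $\gamma\bigl(1-a_+^{\rho/(\rho-1)}\bigr)^{\rho-1}$ when $\rho>1$. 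This is precisely what the paper does (tersely, pointing to Lemma \ref{3}(ii)), so once you correct the $\rho\leq 1$ branch your proof aligns with the paper's.
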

\begin{proof}
\begin{enumerate}
\item[(i)] We have 
\begin{align*}
\phi_{\rho}(c) \geq c &\iff \inf_{y>0}\left\{y^{\rho}c + g(y)\right\} \geq c \iff \forall\, y > 0 \,\,\,  y^{\rho}c + g(y) \geq c \\
&\iff \forall\, y\in(0,1)\,\,\, y^{\rho}c + g(y) \geq c  \iff c \leq \inf_{y\in(0,1)}\left\{\frac{g(y)}{1-y^{\rho}}\right\}.
\end{align*}
\item[(ii)] Let $(\lambda_n)_n$ be a sequence such that $\lambda_n \downarrow \lambda^\ast$. Then, by (i) we have $\phi_\rho(\lambda_n)<\lambda_n$ and by continuity of $\phi_\rho$, we obtain $\phi_\rho(\lambda^\ast)\leq \lambda^\ast$. Setting $c=\lambda^\ast$ in (i), we obtain reversed bound.
\item[(iii)] The lower bound on $\lambda^\ast$ quickly follows from the lower bound on $g$ in Lemma \ref{3} (ii). 
\item[(iv)] If $g(0)=\infty$, then by (iii) we obtain $\lambda^\ast\geq \infty$. If $\lambda^\ast=\infty$, then by definition of $\lambda^\ast$, we have for any $y\in(0,1)$, $g(y)\geq (1-y^\rho)\lambda^\ast$. Since $g$ is nonincreasing, $g(0)=\infty$. 
\end{enumerate}
\end{proof}

\subsection{Positive quadrant dependence}\label{sec:PQD}
\begin{defin}
We say that random variables $A$ and $B$ are positively quadrant dependent (PQD for short) if, for all $a,b \in \R$, 
\begin{align*}
\P(A>a, B>b) \geq \P(A > a) \P(B>b). 
\end{align*}
\end{defin}

It turns out that for a PQD pair $(A,B)$, the $\LDM$ always exists $[0,\infty)$ and, moreover, has explicit form. 

Recall that $a_+ = \ess \sup(A) = \inf \left\{x \in \R\colon \P(A>x)=0 \right\}$.
\begin{thm}\label{thm7.2.}
Assume that nonnegative random variables $A$ and $B$ are PQD. Suppose that $\gamma:=g(0)$ exists and is finite for some $f\in\mathcal{R}^\rho$ with $\rho>0$. Then, the local dependence measure $g$ of $(A,B)$ exists on $[0,\infty)$. Suppose $a_+>0$.
\begin{enumerate}
    \item[(i)] For $y \geq 0$,
\begin{align*}
g(y)= \gamma \max\{1-a_+ y,0\}^\rho.
\end{align*} 
    \item[(ii)] If $a_+\in(0,1]$, then for $\lambda \geq 0$, 
\begin{align*}
\phi_{\rho}(\lambda) &= 
\begin{cases}
\left(\gamma^{1/(1-\rho)}+a_+^{\rho/(\rho-1)}\lambda^{1/(1-\rho)}\right)^{1-\rho},&  \mbox{ if } \rho>1,\\
    \min\left\{\gamma, \lambda a_+^{-\rho}\right\},              &  \mbox{ if } 0<\rho \leq 1,
\end{cases}
\intertext{and}
\lambda^\ast &=
\begin{cases}
   \gamma\left(1-a_+^{\rho/(\rho-1)}\right)^{\rho - 1},&  \mbox{ if } \rho>1,\\
    \gamma,              &  \mbox{ if } 0<\rho \leq 1.
\end{cases}
\end{align*}
\end{enumerate}
\end{thm}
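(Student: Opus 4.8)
The plan is to establish part (i) first, then derive (ii) as a routine computation of the Legendre-type transform. For part (i), the upper bound $g(y) \leq \gamma\max\{1-a_+y,0\}^\rho$ for $y \in [0, a_+^{-1})$ is already available from Lemma \ref{3}(ii) (that lemma gives a lower bound on the liminf of $-\log\P(\cdots)/f(t)$ which is exactly an upper bound when we think of it as controlling $g$ — careful: Lemma \ref{3}(ii) gives $\liminf \geq \gamma(1-a_+y)^\rho$; the matching upper bound on the limsup is what PQD will buy us). So the real content is: under PQD, $\limsup_{t\to\infty} \frac{-\log\P(Aty+B>t)}{f(t)} \leq \gamma(1-a_+y)^\rho$ for $y\in[0,a_+^{-1})$, and $g(y)=0$ for $y \geq a_+^{-1}$ (the latter is Lemma \ref{3}(i)). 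To get the upper bound on the limsup, I would use the PQD inequality together with a decomposition of the event. The key idea: for any $\delta \in (0, a_+)$ and any $t$, write
\[
\P(Aty+B>t) \geq \P\big(A > a_+-\delta,\ B > t(1-(a_+-\delta)y)\big) \geq \P(A > a_+-\delta)\,\P\big(B > t(1-(a_+-\delta)y)\big),
\]
where the last step is exactly positive quadrant dependence. Since $\P(A>a_+-\delta)>0$ by definition of the essential supremum, taking $-\log$, dividing by $f(t)$, and using that $f\in\mathcal{R}^\rho$ together with $g(0)=\gamma$ gives
\[
\limsup_{t\to\infty}\frac{-\log\P(Aty+B>t)}{f(t)} \leq \lim_{t\to\infty}\frac{-\log\P(B>t(1-(a_+-\delta)y))}{f(t(1-(a_+-\delta)y))}\cdot\frac{f(t(1-(a_+-\delta)y))}{f(t)} = \gamma(1-(a_+-\delta)y)^\rho.
\]
Letting $\delta \downarrow 0$ yields $\limsup \leq \gamma(1-a_+y)^\rho$, which combined with Lemma \ref{3}(ii) shows the limit exists and equals $\gamma\max\{1-a_+y,0\}^\rho$ on $[0,a_+^{-1})$. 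For $y=a_+^{-1}$ one needs a small separate argument (sandwiching), and for $y>a_+^{-1}$ Lemma \ref{3}(i) applies directly; this establishes (i).

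For part (ii), with $g(y)=\gamma\max\{1-a_+y,0\}^\rho$ and $a_+\in(0,1]$, I would compute $\phi_\rho(\lambda) = \inf_{y>0}\{y^\rho\lambda + g(y)\}$ directly. Since $g$ vanishes for $y\geq a_+^{-1}$, the infimum over $y \geq a_+^{-1}$ of $y^\rho\lambda$ equals $a_+^{-\rho}\lambda$ (attained at $y=a_+^{-1}$), so $\phi_\rho(\lambda) \leq a_+^{-\rho}\lambda$ always. On $(0, a_+^{-1})$ we minimize $h(y) := y^\rho\lambda + \gamma(1-a_+y)^\rho$. For $\rho>1$ this is differentiable and strictly convex in the relevant sense; setting $h'(y)=0$ gives $\rho\lambda y^{\rho-1} = \rho\gamma a_+(1-a_+y)^{\rho-1}$, i.e. $(y/(1-a_+y))^{\rho-1} = \gamma a_+/\lambda$, which solves explicitly for the optimal $y^\ast$, and substituting back yields, after simplification, $\phi_\rho(\lambda) = (\gamma^{1/(1-\rho)} + a_+^{\rho/(\rho-1)}\lambda^{1/(1-\rho)})^{1-\rho}$; one checks this is indeed $\leq a_+^{-\rho}\lambda$ so the interior minimizer governs. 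For $0<\rho\leq 1$, $y\mapsto y^\rho$ and $y\mapsto(1-a_+y)^\rho$ are both concave, so $h$ is concave on $(0,a_+^{-1})$ and its infimum is attained at an endpoint: $h(0^+)=\gamma$ and $h(a_+^{-1})=\lambda a_+^{-\rho}$, giving $\phi_\rho(\lambda)=\min\{\gamma,\lambda a_+^{-\rho}\}$. Finally $\lambda^\ast$ is obtained either from the formula $\lambda^\ast=\inf_{y\in(0,1)}\{g(y)/(1-y^\rho)\}$ (minimize $\gamma(1-a_+y)^\rho/(1-y^\rho)$ over $y\in(0,1)$ — for $\rho\leq 1$ this ratio is minimized at $y\to 0^+$ giving $\gamma$ since $a_+\leq 1$ makes the numerator shrink at least as fast; for $\rho>1$ a calculus computation gives $\gamma(1-a_+^{\rho/(\rho-1)})^{\rho-1}$) or, more cleanly, by solving the fixed-point equation $\phi_\rho(\lambda^\ast)=\lambda^\ast$ from Lemma \ref{nowe3.15}(ii) using the explicit $\phi_\rho$ just derived.

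The main obstacle is the upper bound on $\limsup_{t\to\infty} \frac{-\log\P(Aty+B>t)}{f(t)}$ in part (i): one must be careful that PQD, as stated, is an inequality $\P(A>a,B>b)\geq \P(A>a)\P(B>b)$ involving strict inequalities in the events, so the decomposition $\{A>a_+-\delta\}\cap\{B>t(1-(a_+-\delta)y)\} \subseteq \{Aty+B>t\}$ must be verified (it holds because on the first set $Aty > (a_+-\delta)ty$, so $Aty+B > (a_+-\delta)ty + t(1-(a_+-\delta)y) = t$), and that $\P(A>a_+-\delta)>0$ strictly, which is where $a_+>0$ and the definition of essential supremum enter. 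A secondary technical point is justifying the interchange of limits/infima when passing $\delta\downarrow 0$ and handling the boundary case $y = a_+^{-1}$ by a sandwich argument, and in part (ii) verifying that the interior critical point for $\rho>1$ genuinely lies in $(0,a_+^{-1})$ and delivers the global infimum — these are routine but need the monotonicity and concavity facts recorded in Lemma \ref{lem:phi-prop}.
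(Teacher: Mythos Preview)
Your proposal is correct and follows essentially the same route as the paper's proof. For part (i) you use the identical PQD-based lower bound on $\P(Aty+B>t)$ via the event $\{A>a_+-\delta\}$, and for part (ii) you carry out the same calculus minimization of $h(y)=\lambda y^\rho+\gamma(1-a_+y)^\rho$ (the paper phrases the $\rho\le 1$ case as ``increasing then decreasing'' rather than concave, and handles the boundary $y=a_+^{-1}$ within the same $\delta$-argument rather than by a separate sandwich, but these are cosmetic differences).
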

\begin{proof}
\begin{enumerate}
    \item[(i)] By Lemma \ref{3} (ii), for $y \in [0, a_+^{-1})$ we have 
    \[
\liminf_{t\to\infty} \frac{-\log\P(Aty+B>t)}{f(t)}  \geq \gamma (1-a_+y)^\rho.
    \]
Since we assume that $g(0)$ is finite, the above inequality extends to  $y= a_+^{-1}$. 

We now show that for $y \in [0, a_+^{-1}]$, the quantity $\gamma (1-a_+y)^\rho$ is an upper bound for the superior limit.
Fix $\delta \in (0, a_+)$. Then, if $0\leq y \leq a_+^{-1} < (a_+-\delta)^{-1}$, we obtain
\begin{multline*}
\P(Aty+B>t) \geq \P(Aty+B>t, A >a_+-\delta)  \geq  \P((a_+-\delta)ty+B>t, A>a_+-\delta) \\
\geq \P(A>a_+-\delta)\, \P(B>t(1-(a_+-\delta)y)),
\end{multline*}
where we have used the fact that $A$ and $B$ are PQD. 
Therefore,
\begin{multline*}
\limsup_{t\to\infty} \frac{-\log\P(Aty+B>t)}{f(t)} \leq \lim_{t \to\infty} \frac{-\log \P(A>a_+-\delta) - \log \P(B>t(1-(a_+-\delta)y)) }{f(t)}
\end{multline*}
Since $f(t)\to\infty$ as $t\to\infty$, and from the fact that $\P(A>a_+-\delta) > 0$, we obtain 
\begin{multline*}
\limsup_{t\to\infty} \frac{-\log\P(Aty+B>t)}{f(t)}   \\ \leq \lim_{t \to\infty} \frac{- \log \P(B>t(1-(a_+-\delta)y)) }{f(t(1-(a_+-\delta)y))} \cdot \frac{f(t(1-(a_+-\delta)y))}{f(t)}  = \gamma(1-(a_+-\delta)y)^\rho.
\end{multline*}
By letting $\delta\downarrow0^+$, we conclude that for $0\leq y\leq a_+^{-1}$,
\begin{multline*}
\gamma(1-a_+ y)^\rho \leq \liminf_{t\to\infty} \frac{-\log\P(Aty+B>t)}{f(t)} \\
\leq \limsup_{t\to\infty} \frac{-\log\P(Aty+B>t)}{f(t)} \leq  \gamma(1-a_+y)^{\rho}.
\end{multline*}
Thus, $g$ exists on $[0, a_+^{-1}]$ and is equal to $\gamma(1-a_+y)^{\rho}$ on this interval. Finally, by Remark \ref{uwaga_do_3}, for $y >a_+^{-1}$ we have that $g(y)$ exists and equals $0$. This completes the proof of (i).
\item[(ii)] 
Since $g(y)=0$ for $y\geq a_+^{-1}$, we have
\begin{align*}
    \phi_{\rho}(\lambda) &= \inf_{y>0}\left\{\lambda y^\rho + g(y) \right\} = \inf_{y\in(0,a_+^{-1})}\left\{\lambda y^\rho + g(y) \right\}=\inf_{y\in(0,a_+^{-1})}\left\{h_1(y)\right\},
\end{align*}
where $h_1\colon(0,a_+^{-1})\to\R$ is defined by $h_1(y) = \lambda y^\rho + \gamma (1-a_+y)^\rho$.

If $0<\rho < 1$, then $h_1$ is increasing on $(0, K)$ and decreasing on $(K,a_+^{-1})$, where $K= (a_++(\tfrac{a_+\gamma}{\lambda})^{1/(1-\rho)})^{-1}$. Therefore,
\begin{align*}
    \inf_{(0, a_+^{-1})}\left\{h_1(y)\right\} = \min\left\{ h_1(0^+), h_1((a_+^{-1})^-)\right\} = \min\left\{ \gamma, \lambda a_+^{-\rho}\right\}.
\end{align*}
The case $\rho \in (0, 1)$ easily extends to $\rho = 1$. 

If $\rho>1$, then $h_1$ is decreasing on $(0, K)$ and increasing on $(K, a_+^{-1})$, where $K$ is as before. 
Therefore,
\begin{align*}
    \inf_{(0, a_+^{-1})}\left\{h_1(y)\right\} = h_1(K) = \frac{\lambda \gamma}{\left(\lambda^{\frac{1}{\rho - 1}}+a_+^{\frac{\rho}{\rho - 1}}\gamma^{\frac{1}{\rho -1}}\right)^{\rho-1}} = \left(\gamma^{1/(1-\rho)}+a_+^{\rho/(\rho-1)}\lambda^{1/(1-\rho)}\right)^{1-\rho}.
\end{align*}
Moreover, it is easy to see that $h_1(K)\leq \lambda a_+^{-\rho}$. 

We have
\begin{align*}
    \lambda^\ast = \inf_{y \in (0,1)}\left\{\frac{g(y)}{1-y^\rho}\right\}=\inf_{y \in (0,1)}\left\{h_2(y)\right\},
\end{align*}
where $h_2\colon(0,1) \to\R$ is defined by $h_2(y) = \gamma (1-a_+y)^\rho/(1-y^\rho)$.
If $\rho\in(0,1]$, then $h_2$ is increasing on $(0,1)$ and therefore
\begin{align*}
    \inf_{y \in (0,1)}\left\{h_2(y)\right\} = h_2(0^+) = \gamma.
\end{align*}
If $\rho > 1$, then $h_2$ is increasing on $(a_+^{1/(\rho-1)}, 1)$ and decreasing on $(0, a_+^{1/(\rho-1)})$. Therefore,
\begin{align*}
    \inf_{y \in (0,1)}\left\{h_2(y)\right\} = h_2\left(a_+^{1/(\rho-1)}\right) = \gamma \left(1-a_+^{\rho/(\rho-1)}\right)^{\rho - 1}.
\end{align*}
\end{enumerate}
\end{proof}

\section{\texorpdfstring{Tails of $AX+B$}{Tails of AX+B}}\label{sec:AX+B}
Recall our standing assumption that $\P(A>0)>0$. In this section, $X$ is assumed to be independent of $(A,B)$ but does not necessarily satisfy \eqref{eq:affine}. 

\begin{thm}\label{nowe3.9}
Let $g$ be the $\LDM_f^\rho$ for the nonnegative random variables $(A,B)$, where $f \in \mathcal{R}^{\rho}$ with $\rho>0$. 
 Suppose $X$ is $\ED_{f}(\lambda)$ with $\lambda\in[0,\infty)$. Then $AX+B$ is $\ED_f(\phi_\rho(\lambda))$.
\end{thm}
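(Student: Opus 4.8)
The plan is to sandwich $(-\log\P(AX+B>t))/f(t)$ between $\phi_\rho(\lambda)$ from above and below as $t\to\infty$, so that the limit exists and equals $\phi_\rho(\lambda)$. Before that I would record two preliminary facts: first, $\phi_\rho(\lambda)<\infty$, since by Lemma \ref{3}(i) the $\LDM$ vanishes on $(a_+^{-1},\infty)$ — note $a_+>0$ by the standing assumption $\P(A>0)>0$ — whence $\phi_\rho(\lambda)\le\lambda(a_+^{-1}+1)^\rho$; second, $\P(X>s)>0$ for every $s$, because $\lambda<\infty$ makes $-\log\P(X>s)$ finite for large $s$ and hence for all $s$ by monotonicity. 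I would also note that $(-\log\P(X\ge s))/f(s)\to\lambda$, which follows from the $\ED_f(\lambda)$ property, the regular variation of $f$, and the squeeze $\P(X>s)\le\P(X\ge s)\le\P(X>s-1)$.

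For the upper bound I would fix $y>0$ and use $A\ge0$ together with the independence of $X$ and $(A,B)$ to write
\[
\P(AX+B>t)\ \ge\ \P(X>ty,\,Aty+B>t)\ =\ \P(X>ty)\,\P(Aty+B>t).
\]
Taking $-\log$, dividing by $f(t)$, and letting $t\to\infty$ — using $(-\log\P(X>ty))/f(ty)\to\lambda$, $f(ty)/f(t)\to y^\rho$, and $(-\log\P(Aty+B>t))/f(t)\to g(y)$ — gives $\limsup_t(-\log\P(AX+B>t))/f(t)\le\lambda y^\rho+g(y)$, and then the infimum over $y>0$ produces the bound $\le\phi_\rho(\lambda)$.

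The substantive part is the matching lower bound. If $\lambda=0$ it is immediate ($\phi_\rho(0)=0$ by Lemma \ref{lem:phi-prop}(ii) and $-\log\P\ge0$), so assume $\lambda>0$. Fix $\eps>0$; I would choose $M>0$ with $\lambda M^\rho\ge\phi_\rho(\lambda)$ (possible since $\phi_\rho(\lambda)<\infty$) and a partition $0=y_0<y_1<\dots<y_N=M$ fine enough that $\lambda(y_k^\rho-y_{k-1}^\rho)\le\eps$ for every $k$, by uniform continuity of $y\mapsto\lambda y^\rho$ on $[0,M]$. Then I would split
\[
\{AX+B>t\}\ \subseteq\ \bigcup_{k=1}^N\big(\{ty_{k-1}\le X<ty_k\}\cap\{AX+B>t\}\big)\ \cup\ \{X\ge tM\}.
\]
On the $k$-th event one has $X<ty_k$, so $A\ge0$ gives $Aty_k+B\ge AX+B>t$; hence this event is contained in the product event $\{X\ge ty_{k-1}\}\cap\{Aty_k+B>t\}$, of probability at most $\P(X\ge ty_{k-1})\,\P(Aty_k+B>t)$. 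A union bound then bounds $\P(AX+B>t)$ by $N+1$ times the largest of these $N$ quantities and $\P(X\ge tM)$. Taking $-\log$, dividing by $f(t)$, using $\log(N+1)/f(t)\to0$, and passing to $\liminf_t$ (the $\liminf$ of a minimum being at least the minimum of the $\liminf$s), one gets
\[
\liminf_{t\to\infty}\frac{-\log\P(AX+B>t)}{f(t)}\ \ge\ \min\Big\{\min_{1\le k\le N}\big(\lambda y_{k-1}^\rho+g(y_k)\big),\ \lambda M^\rho\Big\},
\]
after evaluating $(-\log\P(X\ge ty_{k-1}))/f(t)\to\lambda y_{k-1}^\rho$ (with $y_0=0$ contributing $0$), $(-\log\P(Aty_k+B>t))/f(t)\to g(y_k)$, and $(-\log\P(X\ge tM))/f(t)\to\lambda M^\rho$ (all in $[0,\infty]$). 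Since $\lambda y_{k-1}^\rho+g(y_k)\ge\lambda y_k^\rho+g(y_k)-\eps\ge\phi_\rho(\lambda)-\eps$ by the grid choice and $y_k>0$, and $\lambda M^\rho\ge\phi_\rho(\lambda)$, the right-hand side is $\ge\phi_\rho(\lambda)-\eps$; letting $\eps\downarrow0$ finishes.

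I expect the lower bound to be the only real obstacle. Within it, the two delicate points are the truncation of the large-$X$ region, which I would handle by choosing $M$ so that $\lambda M^\rho$ already dominates the finite value $\phi_\rho(\lambda)$, and the index mismatch inside each block — the $X$-factor controlled at level $ty_{k-1}$ but the $(A,B)$-factor at level $ty_k$ — which a sufficiently fine partition renders negligible via uniform continuity of $y\mapsto\lambda y^\rho$. The independence splittings, the union bound, and the interchange of $\liminf$ with $\min$ are routine.
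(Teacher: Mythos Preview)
Your proof is correct and follows essentially the same strategy as the paper's: the upper bound is identical, and the lower bound proceeds by partitioning the range of $X$, bounding each block via independence and the $\LDM$/$\ED_f$ limits, and applying a union bound. Your execution is slightly cleaner in that choosing the partition via uniform continuity of $y\mapsto\lambda y^\rho$ on $[0,M]$ (rather than a uniform mesh on $[b,a]$) lets you avoid the paper's separate treatment of the cases $\rho\ge1$ and $\rho<1$.
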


\begin{proof}
First, we prove that 
\begin{align}\label{eq:upperbound}
\limsup_{t\to\infty} \frac{-\log\P(AX + B > t)}{f(t)} \leq \phi_{\rho}(\lambda).
\end{align}
For any $y>0$ and $t>0$, we have
\begin{align*}
\P(AX+B>t) &\geq \P(AX+B>t, X>ty) \geq \P(Aty+B>t, X > ty) \\
& = \P(Aty+B>t)\,\P(X > ty).
\end{align*}
Thus, 
\begin{align*}
    \limsup_{t \to\infty}{\frac{-\log\P(AX+B>t)}{f(t)}} &\leq \limsup_{t \to\infty}{\frac{-\log\P(Aty+B>t) - \log\P(X > ty)}{f(t)}} \\
    &\leq g(y) + \limsup_{t \to\infty}{\frac{-\log\P(X>ty)}{f(ty)}\cdot \frac{f(ty)}{f(t)}} = g(y) + \lambda y^{\rho}.
\end{align*}
By taking $\inf_{y>0}$ on both sides, we obtain \eqref{eq:upperbound}. 

Next, we establish the lower bound,
\begin{align}\label{eq:lower}
\liminf_{t\to\infty} \frac{-\log\P(AX + B > t)}{f(t)} \geq \phi_{\rho}(\lambda).
\end{align}
If $\lambda=0$, then by Lemma \ref{lem:phi-prop} (ii), we have $\phi_\rho(\lambda)=0$, and the above inequality is trivial. Now suppose $\lambda>0$. Since $\phi_\rho$ is finite, there exists $a > 0$ such that $\lambda a^{\rho} \geq \phi_{\rho}(\lambda)$. We note that 
\begin{align}
\begin{split}
\liminf_{t\to\infty} &\frac{-\log\P(AX + B > t, X > ta)}{f(t)} \geq \liminf_{t\to\infty} \frac{-\log\P(X > ta)}{f(t)} \\
&= \liminf_{t\to\infty} \frac{-\log\P(X > ta)}{f(ta)} \cdot \frac{f(ta)}{f(t)} = \lambda a^{\rho} \geq \phi_{\rho}(\lambda).
\end{split}
\label{11}
\end{align}
Moreover, by Lemma \ref{lem:phi-prop} (iv), we have $\sup_{y>0}\{g(y)\} \geq \phi_{\rho}(\lambda)$. 
Fix $\eta > 0$. There exists $b\in(0,a)$ such that $g(b) \geq \phi_{\rho}(\lambda) - \eta/2$. 
We have
\begin{align}\label{10}
\begin{split}
    \liminf_{t\to\infty} \frac{-\log\P(AX + B > t, X \leq tb)}{f(t)} & \geq
\liminf_{t\to\infty} \frac{-\log\P(Atb + B > t)}{f(t)}  = g(b) \\ & \geq \phi_{\rho}(\lambda) - \frac{\eta}{2}.
\end{split}
\end{align}
Combining \eqref{11} and \eqref{10}, we deduce that there exists $M>0$ such that for all $t>M$,
\begin{align*}
\P(AX+B>t, X>ta) +  \P(AX+B>t, X \leq tb)  \leq 2 \exp(-f(t)(\phi_{\rho}(\lambda) - \eta)).
\end{align*}
Now, fix $y, h > 0$ such that $h<y$. Then,
\begin{align}\label{4}\begin{split}
\P(AX+B>t, t(y-h) < X \leq ty) &\leq \P(Aty+B>t,t(y-h) < X) \\
& =
\P(Aty + B > t)\,\P(X>t(y-h)). \end{split}
\end{align}
Since
\begin{align*}
\lim_{t \rightarrow \infty}{\frac{-\log \P(X > t(y-h))}{f(t)}} = 
\lim_{t \rightarrow \infty}{\frac{-\log \P(X > t(y-h))}{f(t(y-h))}} \cdot
\frac{f(t(y-h))}{f(t)} = \lambda(y-h)^{\rho},
\end{align*}
for sufficiently large $t$, we obtain
\begin{align*}
\P(X>t(y-h)) \leq \exp(-f(t)(\lambda(y-h)^{\rho} - \eta)).
\end{align*}
If $g(y) < \infty$, then, from the definition of $g$, we conclude that 
\begin{align*}
\exists\, M > 0 \mbox{ such that } \forall\, t>M \quad \P(Aty + B>t) \leq \exp(-f(t)(g(y) - \eta)). 
\end{align*}
Thus, from \eqref{4}, we get for sufficiently large $t$,
\begin{align}\label{12}
\begin{split}
\P(AX+B>t, \text{ }t(y-h) < X \leq ty) \leq 
\exp(-f(t)(g(y)+\lambda(y-h)^{\rho}-2\eta)).
\end{split}
\end{align}
If $g(y)=\infty$, then for any $G>0$ there exists $N>0$ such that for all $t>N$, we obtain a bound 
\begin{align}\label{12'}
\P(AX+B>t, \text{ }t(y-h) < X \leq ty) \leq 
\exp(-f(t)(G+\lambda(y-h)^{\rho})).
\end{align}

We now consider two cases:
(i) $\rho \geq 1$ and (ii) $\rho\in(0,1)$. 

(i) By the convexity of $(0,\infty)\ni x\mapsto x^\rho$, we obtain 
\[
\lambda y^{\rho} - \lambda (y-h)^{\rho} \leq \lambda h \rho y^{\rho -1}.
\]
Using \eqref{12}, the definition of $\phi_\rho$, and the above inequality, we conclude that if $g(y)<\infty$, then for all $y>h>0$ and  sufficiently large $t$,
\begin{align*}
\P(AX+B>t, t(y-h) < X \leq ty) &\leq \exp(-f(t)(g(y)+\lambda y^{\rho}-\lambda y^{\rho} + \lambda(y-h)^{\rho}-2\eta)) \\ 
&\leq \exp(-f(t)(\phi_{\rho}(\lambda) - \lambda h \rho y^{\rho -1} - 2\eta)). 
\end{align*}
The same bound holds when $g(y)=\infty$ by choosing $G$ sufficiently large in \eqref{12'}. 

Fix $n\in\N$.
Let $h_0 =0$, $h_k - h_{k-1} = \frac{a-b}{n}=h$ for $k=1,\ldots,n$. Then for sufficiently large $t$, 
\begin{align*}
\P(AX+B>t, tb < X \leq ta) &= \sum_{k=1}^{n}{\P(AX+B>t, t(a-h_k)<X \leq t(a-h_{k-1}))} \\
&\leq
\sum_{k=1}^{n}{\exp(-f(t)(\phi_{\rho}(\lambda)-\lambda h \rho (a-h_{k-1})^{\rho -1}-2\eta))} \\
&\leq
n \exp(-f(t)(\phi_{\rho}(\lambda)-\lambda h \rho a^{\rho -1}-2\eta)). 
\end{align*}
Thus, for sufficiently large $t$,
\begin{align*}
\P(AX+B>t) &= \P(AX+B>t, X\in t(b,a]) + \P(AX+B>t, X\notin t(b,a]) \\
&\leq 
n \exp(-f(t)(\phi_{\rho}(\lambda)-\lambda h \rho a^{\rho -1}-2\eta)) + 2 \exp(-f(t)(\phi_{\rho}(\lambda)-\eta)) \\ 
&\leq 
(n+2){\exp(-f(t)(\phi_{\rho}(\lambda)-\lambda h \rho a^{\rho -1}-2\eta))}. 
\end{align*}
Therefore, for sufficiently large $t$,
\begin{align*}
\frac{-\log \P(AX+B>t)}{f(t)} \geq \frac{-\log(n+2)}{f(t)}+\phi_{\rho}(\lambda)-\lambda h \rho a^{\rho -1} - 2\eta. \\
\end{align*}
Taking $\liminf_{t \rightarrow \infty}$ on both sides, we get
\begin{align*}
\liminf_{t \rightarrow \infty}\frac{-\log \P(AX+B>t)}{f(t)} \geq \phi_{\rho}(\lambda)-\lambda h \rho a^{\rho -1} - 2\eta. 
\end{align*}
By letting $n \uparrow \infty$ (recall that $h = (a-b)/n$) and then $\eta \downarrow 0^{+}$, we obtain \eqref{eq:lower}.

(ii) For $\rho\in(0,1)$, we have $y^{\rho}-(y-h)^{\rho} \leq h^{\rho}$ for $y>h>0$. 
Similarly to case (i), for all $y>h>0$ and sufficiently large $t$, we get
\begin{align*}
\P(AX+B>t, t(y-h)<X \leq ty) & \leq \exp(-f(t)(g(y)+\lambda y^{\rho}-\lambda y^{\rho} + \lambda(y-h)^{\rho}-2\eta)) \\ 
& \leq \exp(-f(t)(\phi_{\rho}(\lambda)-\lambda h^{\rho}-2\eta)) 
\end{align*}
and this bound also holds when $g(y)=\infty$. 
Proceeding as in (i), 
we obtain that for sufficiently large $t$,
\begin{align*}
\P(AX+B>t) \leq (n+2){\exp(-f(t)(\phi_{\rho}(\lambda)-\lambda h ^{\rho} -2\eta))}.
\end{align*}
The remaining part of the proof is analogous.
\end{proof}

\section{\texorpdfstring{The equation $X\stackrel{d}{=}AX+B$}{The equation X=AX+B}}\label{sec:X=AX+B}

The stochastic fixed-point equation \eqref{eq:affine} associated with the affine recursion $X_n=A_n X_{n-1}+B_n$  has been extensively studied in the literature. We summarize key results on the existence and uniqueness of the solution below, \cite{BurBook16}.
\begin{thm}
    Assume that $\P(A\geq 0, B\geq 0)=1$. If 
    \[
    \E[\log A]<0\quad\mbox{ and }\quad\E[\max\{\log B, 0\}]<\infty,
    \]
    then there exists a unique solution $X$ to 
        \[
    X\stackrel{d}{=}AX+B,\qquad (A,B)\mbox{ and }X\mbox{ are independent}.
    \]
Moreover, $X$ is given by the a.s. convergent series representation
    \[
    X \stackrel{d}{=}\sum_{n=1}^\infty B_n \prod_{k=1}^{n-1} A_k,
    \]
    where $(A_n,B_n)_n$ are independent copies of $(A,B)$. 

Additionally, if $X_n=A_n X_{n-1}+B_n$ for $n=1,2,\ldots$, where $X_0$ is independent of $(A_n,B_n)_n$, then $X_n$ converges in distribution to $X$. 
\end{thm}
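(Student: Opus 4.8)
The plan is to follow the classical route (Vervaat, Brandt, Goldie): exhibit the candidate solution as the almost sure limit of the \emph{backward} series, check that its law solves the equation, and then show that every solution must have this law by iterating the fixed-point relation and invoking exchangeability. Throughout I write $\Pi_0=1$, $\Pi_n=\prod_{k=1}^nA_k$, and $R=\sum_{n=1}^\infty B_n\Pi_{n-1}$, where $(A_n,B_n)_n$ are i.i.d.\ copies of $(A,B)$, and recall that $\E[\log A]<0$ forces $\E[(\log A)^+]<\infty$ (the variables take values in $[-\infty,\infty)$, since $A$ and $B$ may vanish with positive probability).

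The first step is a.s.\ convergence of the series defining $R$, which will also give $\Pi_n\to0$ a.s. Kolmogorov's strong law (in the form valid when only the positive part is integrable, the limit then lying in $[-\infty,\infty)$) yields $\tfrac1n\log\Pi_n\to\E[\log A]<0$ a.s., hence $\Pi_n\to0$ a.s. For the remaining factors, $\E[\max\{\log B,0\}]<\infty$ is equivalent, for i.i.d.\ copies, to $\sum_n\P(\log B_n>\eps n)<\infty$ for every $\eps>0$, so Borel--Cantelli gives $\limsup_n\tfrac1n\log B_n\le0$ a.s. Adding the two estimates, $\limsup_n\tfrac1n\log(B_n\Pi_{n-1})\le\E[\log A]<0$ a.s., and the root test delivers a.s.\ (absolute) convergence of $R$.

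To see that the law of $R$ solves the equation, I would peel off the first summand: $R=B_1+A_1R'$ with $R'=\sum_{n\ge2}B_n\prod_{k=2}^{n-1}A_k$; after shifting indices, $R'$ is a measurable function of $(A_k,B_k)_{k\ge2}$ only, hence independent of $(A_1,B_1)$, and $R'\stackrel{d}{=}R$. Thus $R\stackrel{d}{=}AX+B$ with $X\stackrel{d}{=}R$ independent of $(A,B)$, which gives existence and the series representation at once. For uniqueness, let $Y$ carry the law of an arbitrary solution, independent of $(A_n,B_n)_n$, and set $M_n=\sum_{k=1}^nB_k\Pi_{k-1}+\Pi_nY$. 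I would prove $M_n\stackrel{d}{=}Y$ for all $n$ by induction: writing $M_{n+1}=B_1+A_1W$ with $W=\sum_{k=2}^{n+1}B_k\prod_{j=2}^{k-1}A_j+\prod_{j=2}^{n+1}A_j\cdot Y$, the block $W$ has the same law as $M_n$, hence as $Y$ by the inductive hypothesis, and is independent of $(A_1,B_1)$, so the fixed-point property of $Y$ gives $M_{n+1}\stackrel{d}{=}Y$. Letting $n\to\infty$, $M_n\to R$ a.s.\ (the sum converges by the first step and $\Pi_nY\to0$ since $\Pi_n\to0$ and $Y$ is a.s.\ finite), and therefore $Y\stackrel{d}{=}R$.

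Finally, for the recursion $X_n=A_nX_{n-1}+B_n$ with $X_0$ independent of the driving sequence, iterating forward gives $X_n=\Pi_nX_0+\sum_{k=1}^n\bigl(\prod_{j=k+1}^nA_j\bigr)B_k$. This expression need not converge pathwise, but the reversal $\bigl((A_1,B_1),\dots,(A_n,B_n)\bigr)\mapsto\bigl((A_n,B_n),\dots,(A_1,B_1)\bigr)$ preserves the joint law, so $X_n$ has the same distribution as $\Pi_nX_0+\sum_{k=1}^nB_k\Pi_{k-1}$, which converges a.s.\ to $R$; hence $X_n$ converges in distribution to $X\stackrel{d}{=}R$. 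The main obstacle is conceptual rather than a single sharp estimate: the forward recursion is the object one actually cares about but converges only in law, the backward series converges almost surely, and the bridge between them is exactly the distributional symmetry of i.i.d.\ pairs---getting the reindexings straight in both the uniqueness step and the convergence-in-distribution step is where the care lies. A secondary point is the possible vanishing of $A$ and $B$, which makes $\log A,\log B$ take the value $-\infty$; since only one-sided bounds on $\tfrac1n\log(B_n\Pi_{n-1})$ are needed and the strong law survives with a possibly infinite negative limit, this causes no real trouble.
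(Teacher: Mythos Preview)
Your proof is correct and follows the classical Vervaat--Brandt--Goldie route; all four steps (a.s.\ convergence of the backward series via the root test, the peeling argument for existence, the iterated fixed-point argument for uniqueness, and the time-reversal trick for convergence in distribution of the forward recursion) are executed cleanly, including the handling of possible atoms at zero for $A$ and $B$.

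The paper, however, does not prove this theorem at all: it is stated as a summary of known results with a reference to \cite{BurBook16} and no argument is given. So there is nothing to compare your approach to within the paper itself. What you have written is essentially the standard textbook proof one would find in that reference, and it would serve perfectly well as a self-contained substitute for the citation.
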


We say that a real random variable $Y$ is stochastically majorized by $Z$ and we write $Y\leq_{st} Z$ if $\P(Y\leq t)\geq \P(Z\leq t)$ for all $t\in\R$.  

\begin{lemma}
Let $(A_n, B_n)_{n \geq 1}$ be a sequence of independent copies of a generic pair $(A,B)$.  Let $X_0$ be independent of $(A_n, B_n)_{n \geq 1}$ and define  $X_n = A_nX_{n-1}+B_n$. Assume that $A \geq 0$ a.s. 
\begin{enumerate}
    \item[(i)] If $X_1 \geq_{st} X_0$, then $X_n\geq_{st} X_{n-1}$ for all $n\geq 1$. 
    \item[(ii)] If $X_1 \leq_{st} X_0$, then $X_n\leq_{st} X_{n-1}$ for all $n\geq 1$. 
\end{enumerate}
\end{lemma}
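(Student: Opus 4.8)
The plan is to recast both statements as the iteration of a single monotone operator. For a probability measure $\mu$ on $\R$, let $T\mu$ denote the law of $AX+B$ where $X\sim\mu$ is independent of $(A,B)$; then the law of $X_n$ is $T^n\mu_0$, where $\mu_0$ is the law of $X_0$, and the hypotheses of (i) and (ii) read, respectively, $T\mu_0\geq_{st}\mu_0$ and $T\mu_0\leq_{st}\mu_0$. The key observation is that $T$ is monotone for the stochastic order: if $\mu\leq_{st}\nu$, realize the order by a coupling $(Y,Z)$ with $Y\sim\mu$, $Z\sim\nu$, and $Y\le Z$ almost surely, chosen independent of $(A,B)$ (e.g. via the quantile transform applied to a uniform random variable independent of $(A,B)$). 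Since $A\ge 0$ a.s., the affine map $x\mapsto Ax+B$ is almost surely nondecreasing, so $AY+B\le AZ+B$ a.s., whence $T\mu\leq_{st}T\nu$. This is the only place the standing assumption $A\ge0$ is used.

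Granting this, part (i) follows by induction on $n$: the case $n=1$ is the hypothesis $X_1\geq_{st}X_0$, and if $X_n\geq_{st}X_{n-1}$, i.e. $T^n\mu_0\geq_{st}T^{n-1}\mu_0$, then applying the monotone operator $T$ to both sides yields $T^{n+1}\mu_0\geq_{st}T^n\mu_0$, that is, $X_{n+1}\geq_{st}X_n$. Part (ii) is obtained by the same induction with every inequality reversed.

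Equivalently, one may couple whole trajectories: choose $(Y_0,Y_1)$ with $Y_0\sim X_0$, $Y_1\sim X_1$, and $Y_0\le Y_1$ a.s., let $(A_n,B_n)_{n\ge1}$ be i.i.d. copies of $(A,B)$ independent of $(Y_0,Y_1)$, and set $U_0=Y_0$, $V_0=Y_1$, $U_n=A_nU_{n-1}+B_n$, $V_n=A_nV_{n-1}+B_n$. Since each $A_n\ge0$, a pathwise induction gives $U_n\le V_n$ a.s. for all $n$; meanwhile $U_n\sim X_n$ and, because the innovations are i.i.d. and independent of the initial value, $V_n\sim X_{n+1}$. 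Hence $X_{n+1}\geq_{st}X_n$ for every $n\ge0$, which is (i), and (ii) is the same with $Y_1\le Y_0$. I do not expect a genuine obstacle: the entire content is that a single affine step preserves $\leq_{st}$ precisely because $A\ge0$, and the only care required is the bookkeeping keeping the order-coupling independent of $(A,B)$ at each stage, so that the iterates retain the intended laws.
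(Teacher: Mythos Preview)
Your proof is correct and follows essentially the same approach as the paper: induction on $n$, using that the affine map $x\mapsto Ax+B$ with $A\ge 0$ preserves the stochastic order when applied to a variable independent of $(A,B)$. The paper compresses this into a single displayed line, writing $X_{n+1}\stackrel{d}{=}A_{n+1}X_n+B_{n+1}\geq_{st}A_{n+1}X_{n-1}+B_{n+1}\stackrel{d}{=}X_n$ and invoking $A_{n+1}\ge 0$ together with the independence of $X_n$ and $X_{n-1}$ from $(A_{n+1},B_{n+1})$; your operator $T$ and the explicit coupling argument simply spell out why that middle inequality holds.
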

\begin{proof}
(i) We proceed by induction, assume that $X_{n}\geq_{st}X_{n-1}$ for $n\geq 1$. Then, since $A_{n+1}\geq 0$ a.s. and $X_n$ is independent of $(A_{n+1},B_{n+1})$, we obtain
\[
X_{n+1} \stackrel{d}{=}A_{n+1}X_n+B_{n+1}\geq_{st} A_{n+1}X_{n-1}+B_{n+1}\stackrel{d}{=}X_n. 
\]
Point (ii) is proved in the same way. 
\end{proof}
\begin{corol}\label{cor:monotone}
If $X_n$ converges in distribution to $X$, then $X_1\geq_{st} X_0$ implies that $X\geq_{st}X_n$ for all $n\in\N$ and if $X_1\leq_{st}X_0$, then $X\leq_{st} X_n$ for all $n\in\N$. 
\end{corol}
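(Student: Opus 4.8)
The statement to prove is Corollary \ref{cor:monotone}: if $X_n$ converges in distribution to $X$, then $X_1 \geq_{st} X_0$ implies $X \geq_{st} X_n$ for all $n$, and similarly for the reverse inequality.

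The plan is to combine the monotonicity established in the preceding Lemma with the stability of stochastic majorization under weak convergence. First I would invoke the Lemma: under the hypothesis $X_1 \geq_{st} X_0$, the sequence $(X_n)_n$ is stochastically nondecreasing, i.e. $X_m \geq_{st} X_n$ whenever $m \geq n$. This follows by iterating part (i) of the Lemma.

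Next, I would fix $n \in \N$ and a continuity point $t$ of the distribution function of $X$. For every $m \geq n$ we have $\P(X_m \leq t) \leq \P(X_n \leq t)$ by the monotonicity just established. Since $X_m \to X$ in distribution and $t$ is a continuity point of $F_X$, letting $m \to \infty$ gives $\P(X \leq t) = \lim_{m\to\infty}\P(X_m \leq t) \leq \P(X_n \leq t)$. Thus $\P(X \leq t) \leq \P(X_n \leq t)$ at every continuity point $t$ of $F_X$; since the set of discontinuity points is countable and both sides are right-continuous, the inequality extends to all $t \in \R$, which is precisely $X \geq_{st} X_n$. The case $X_1 \leq_{st} X_0$ is entirely symmetric, using part (ii) of the Lemma to get $X_m \leq_{st} X_n$ for $m \geq n$ and passing to the limit in $\P(X_m \leq t) \geq \P(X_n \leq t)$.

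There is no real obstacle here; the only mild subtlety is that weak convergence only controls distribution functions at continuity points, so one must extend the pointwise inequality from continuity points to all of $\R$ by a density/right-continuity argument, but this is standard. A full write-up would be a short paragraph invoking the Lemma and this limiting argument.

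\begin{proof}
By Lemma 5.2 (i), the hypothesis $X_1\geq_{st}X_0$ yields $X_m\geq_{st}X_{m-1}$ for all $m\geq 1$, and hence, by transitivity of $\leq_{st}$, $X_m\geq_{st}X_n$ whenever $m\geq n$; that is, $\P(X_m\leq t)\leq \P(X_n\leq t)$ for all $t\in\R$ and all $m\geq n$. Fix $n\in\N$ and let $t$ be a continuity point of the distribution function of $X$. Since $X_m\to X$ in distribution, $\P(X\leq t)=\lim_{m\to\infty}\P(X_m\leq t)\leq \P(X_n\leq t)$. As the distribution function of $X$ has at most countably many discontinuities and distribution functions are right-continuous, the inequality $\P(X\leq t)\leq \P(X_n\leq t)$ extends to every $t\in\R$, i.e. $X\geq_{st}X_n$. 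The case $X_1\leq_{st}X_0$ follows in the same way from Lemma 5.2 (ii), which gives $\P(X_m\leq t)\geq\P(X_n\leq t)$ for $m\geq n$, and passing to the limit yields $\P(X\leq t)\geq\P(X_n\leq t)$ at continuity points of $F_X$, hence everywhere.
\end{proof}
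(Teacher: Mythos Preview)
Your proof is correct and is exactly the natural argument the paper has in mind: the corollary is stated without proof, as an immediate consequence of the preceding monotonicity lemma together with passage to the weak limit. Your careful handling of continuity points is the standard way to make this rigorous.
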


\begin{lemma} \label{ess_supX}
    If $A\geq 0$ a.s., then 
    \[
   \mathrm{ess}\sup(X)= \mathrm{ess}\sup\left( \frac{B}{1-A}\mid A<1\right).
    \]
\end{lemma}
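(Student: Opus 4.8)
The plan is to prove the two inequalities $\mathrm{ess}\sup(X)\le \mathrm{ess}\sup\!\left(\frac{B}{1-A}\mid A<1\right)=:M$ and $\mathrm{ess}\sup(X)\ge M$ separately, exploiting the a.s. convergent series representation $X\stackrel{d}{=}\sum_{n\ge1}B_n\prod_{k=1}^{n-1}A_k$ from the existence theorem (and, where that representation is not available because $X$ may not exist as an honest random variable, interpreting $\mathrm{ess}\sup(X)$ via the fixed-point equation directly). For the upper bound, I would argue that on the event $\{A<1\}$ one has $B/(1-A)\le M$ a.s., i.e. $B\le (1-A)M$ a.s. on $\{A<1\}$, while on $\{A\ge1\}$ we use $A\ge0$. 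Then, starting from $X\stackrel{d}{=}AX+B$ with $X$ independent of $(A,B)$, I would show that $\mathrm{ess}\sup(X)\le M$ is a fixed point of the monotone map: if $\mathrm{ess}\sup(X)=:L$, then $AX+B\le AL+B$, and on $\{A<1\}$ we get $AL+B\le AL+(1-A)M = M-(L-M)(1-A)\cdot(-1)$... more cleanly, $AL+B\le AL+(1-A)M$, which is $\le \max\{L,M\}$ when $A<1$; combined with the fact that $\mathbb P(A<1)=1$ must hold (else $\mathbb E[\log A]<0$ fails — indeed $\mathbb E[\log A]<0$ forces $\mathbb P(A<1)>0$, and more importantly boundedness of $X$ on the relevant event), one deduces $L\le M$. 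The clean way: iterate. Since $X_n=A_nX_{n-1}+B_n$ with $X_0\equiv 0\le M$, an induction gives $X_n\le M$ a.s. for all $n$ (using $X_{n-1}\le M$, $A_n<1$ a.s. — wait, $A_n<1$ need not hold a.s.).

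Let me restructure: the cleaner route is to handle the upper bound by noting that if $c>M$, then $\mathbb P(B>(1-A)c,\ A<1)=0$, hence $\mathbb P(AX+B>c)\ge \mathbb P(AX+B>c)$... Instead I would directly verify that the constant $M$ satisfies: for $X$ with $\mathrm{ess}\sup(X)\le c$ and any $c\ge M$, one has $\mathrm{ess}\sup(AX+B)\le c$. Indeed $AX+B\le Ac+B$; on $\{A<1\}$, $Ac+B\le Ac+(1-A)M\le Ac+(1-A)c=c$ a.s.; on $\{A\ge 1\}$ we need a separate argument, but here the point is that $\mathbb P(A\ge 1)$ contributes a term $Ac$ which is unbounded unless we know more — this is exactly why the hypothesis (implicitly, through the existence of $X$) forces the tail of $\prod A_k$ to vanish; concretely, from the series representation $X=\sum_n B_n\prod_{k<n}A_k$, each summand on the event that all relevant $A_k<1$ is bounded by $M\prod_{k<n}A_k(1-A_n)$... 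Actually the transparent computation is: $X=\sum_{n\ge1}B_n\prod_{k=1}^{n-1}A_k$ and, restricting to realizations of the $(A_k)$ (which a.s. satisfy $\prod_{k=1}^mA_k\to0$, so in particular $A_k<1$ infinitely often and the product is a.s. summable-friendly), one bounds $B_n\le M(1-A_n)$ off a null set wherever $A_n<1$; telescoping $\sum_n (1-A_n)\prod_{k<n}A_k = 1-\lim_m\prod_{k\le m}A_k=1$ a.s. gives $X\le M$ a.s. This handles $A_n\ge1$ too, since there $B_n\le M(1-A_n)\le 0\le$ nothing — hmm, $B_n\ge 0$ so on $\{A_n\ge 1\}$ the bound $B_n\le M(1-A_n)$ fails. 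The resolution: on $\{A_n\ge1\}$, $B_n$ is not controlled by $M$, but such $n$ occur only finitely often relative to the product decaying — no. The honest fix is that $\mathbb E[\log A]<0$ does allow $\mathbb P(A\ge 1)>0$; however the series still converges and $M<\infty$ forces, I claim, $B=0$ a.s. on $\{A\ge1\}$: indeed if $\mathbb P(B>0, A\ge 1)>0$ then $M=\infty$ by convention (the conditional ess sup over $\{A<1\}$ could still be finite while $X$ is unbounded), so the statement should be read with $M=\infty$ in that case and the inequality $\mathrm{ess}\sup(X)\le M$ is trivial; when $\mathbb P(B>0,A\ge1)=0$ then off a null set $B_n(1-A_n)_+ = B_n$ whenever $B_n>0$, giving the telescoping bound above. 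So I would split into the cases $M=\infty$ (trivial) and $M<\infty$ (telescoping argument), being careful that in the latter case $B_n=0$ a.s. on $\{A_n\ge1\}$.

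For the lower bound $\mathrm{ess}\sup(X)\ge M$, I would again use the series: for any $c<M$, the event $\{B_1>(1-A_1)c,\ A_1<1\}$ has positive probability by definition of the conditional essential supremum; since all terms $B_n\prod_{k<n}A_k$ are nonnegative, on a further positive-probability event where, say, $A_1$ is slightly below $1$ and $B_1$ is close to $M(1-A_1)$ while $A_2$ is close to $1$ as well, the partial sum $B_1+B_1$-type terms... more simply: condition on $A_1<1$ and choose $A_2,\dots,A_N$ all close to $1$ (each such event has positive probability because $\mathrm{ess}\sup(A)$ restricted to $\{A<1\}$ can be approached — here I need $\mathrm{ess}\sup(A\mid A<1)$ close enough; if $a_+>1$ it is even easier). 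The cleanest: take the realization where $X_1=B_1/(1-A_1)$-witnessing values occur and iterate the recursion $X_n=A_nX_{n-1}+B_n$ keeping $X_n$ close to its fixed level; formally, for $c<M$ pick $\delta$ with $\mathbb P(A\in(1-\delta,1),\ B>(1-A)c\cdot\frac{1}{1-\delta})>0$ — one shows by a geometric-series / contraction argument that running the recursion on such events for many steps pushes $X_n$ above $c$ with positive probability, whence $\mathrm{ess}\sup(X_n)\ge c$, and since $X_n\Rightarrow X$ the ess sup can only grow in the limit (using e.g. Corollary \ref{cor:monotone} with $X_0=0$, so $X\ge_{st}X_n$, giving $\mathrm{ess}\sup(X)\ge\mathrm{ess}\sup(X_n)\ge c$). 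Letting $c\uparrow M$ finishes it.

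The main obstacle I anticipate is the bookkeeping around the event $\{A\ge 1\}$ (and the case $a_+\ge 1$) in the upper bound: one must argue cleanly that finiteness of $M$ together with the standing hypotheses forces $B=0$ a.s. on $\{A\ge1\}$, so that the telescoping identity $\sum_{n\ge1}(1-A_n)_+\prod_{k=1}^{n-1}A_k\le 1$ — or rather the a.s. identity obtained by restricting to the full-measure set where the infinite product tends to $0$ — can be used to dominate $X$ by $M$. The lower bound is more routine but still requires care in choosing the positive-probability events on which the recursion is driven up to level $c$, using independence of the $(A_n,B_n)$ and the definition of the conditional essential supremum.
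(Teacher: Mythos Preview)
Your upper-bound argument has a genuine gap: the claim that ``$M<\infty$ forces $B=0$ a.s.\ on $\{A\ge1\}$'' is false, since $M$ is a conditional essential supremum over $\{A<1\}$ and says nothing about $B$ on the complement. For instance, take $(A,B)=(2,1)$ and $(A,B)=(0,0)$ each with probability $1/2$: then $M=0$ while $\P(B>0,\,A\ge1)=1/2$, and one computes $X=2^{T-1}-1$ with $T$ geometric, so $\mathrm{ess}\sup(X)=\infty$. The paper handles the obstacle differently. After first proving $x_+\ge x_0$ via the support description of $X$ from \cite[Proposition~2.5.3]{BurBook16}, it invokes the invariance $a\,\mathrm{supp}(X)+b\subset\mathrm{supp}(X)$ for $(a,b)\in\mathrm{supp}(A,B)$ (\cite[Lemma~2.5.1]{BurBook16}) to deduce $Ax_++B\le x_+$ a.s.; on $\{A\ge1\}$ this gives $B\le x_+(1-A)\le x_0(1-A)$ (using $x_+\ge x_0\ge0$ and $1-A\le0$), while on $\{A<1\}$ the bound $B\le x_0(1-A)$ holds by definition of $x_0$. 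With $B\le x_0(1-A)$ a.s.\ established, your telescoping $\sum_n(1-A_n)\prod_{k<n}A_k=1$ finishes the job exactly as you wrote. Note that the step $Ax_++B\le x_+$ tacitly requires $x_+<\infty$; the example above shows the lemma as stated can fail without an extra hypothesis such as $\P(A<1)=1$, which is in force everywhere the lemma is actually invoked in the paper. So your instinct that $\{A\ge1\}$ is the real obstacle was correct, but the fix is not ``$M<\infty$''---it is ``$x_+<\infty$'', and that is extracted from the support invariance rather than from $M$.

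For the lower bound, your recursion-driving idea can be made to work but is heavier than needed, and the specific event you write down, $\{A\in(1-\delta,1),\,B>(1-A)c/(1-\delta)\}$, can have probability zero (e.g.\ when $A$ is bounded away from $1$). The paper's route is one line: the support description in \cite[Proposition~2.5.3]{BurBook16} already contains $\{b/(1-a):(a,b)\in\mathrm{supp}(A,B),\,a<1\}$, which immediately gives $x_+\ge x_0$.
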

\begin{proof}
Denote $x_+ = \mathrm{ess}\sup(X)$ and 
\[
x_0 = \mathrm{ess}\sup\left( \frac{B}{1-A}\mid A<1\right).
\]
Let $g_{a,b}(t) = a\,t+b$. With 
\[
G(A,B) = \left\{
g_{(a_1,b_1)}\circ\ldots\circ g_{(a_n,b_n)}\colon  (a_i,b_i)\in\mathrm{supp}(A,B), i=1,\ldots,n, n\geq 1
\right\}
\]
by \cite[Proposition 2.5.3]{BurBook16}, we have
\[
\mathrm{supp}(X) = \overline{\left\{ \frac{b}{1-a}\colon g_{a,b}\in G(A,B), a<1 \right\}}.
\]
Thus,
\[
\overline{\left\{
\frac{b}{1-a}\colon (a,b)\in\mathrm{supp}(A,B), a<1
\right\}} \subset \mathrm{supp}(X)
\]
and therefore $x_+\geq x_0$. If $x_0=\infty$, then $x_+=\infty$ and there is nothing to prove. 

Assume that $x_0<\infty$. 
By \cite[Lemma 2.5.1]{BurBook16}, for every $(a, b)\in\mathrm{supp}(A,B)$ we have
\[
a\,\mathrm{supp}(X)+b\subset \mathrm{supp}(X)
\]
and therefore $a x_++b\leq x_+$ for all $(a,b)\in\mathrm{supp}(A,B)$. Thus, $A x_++B\leq x_+$ a.s. and
\[
    B \1_{A\geq 1}\leq x_+ (1-A) \1_{A\geq 1}\leq x_0 (1-A) \1_{A\geq 1}\quad \mbox{a.s.},
\]
where the latter inequality follows from the fact that $x_+\geq x_0$. 

By definition of $x_0$ we have 
\begin{align*}
    B \1_{A<1} &\leq x_0 (1-A) \1_{A<1},\quad \mbox{a.s.}
\end{align*}
and therefore $B \leq x_0 (1-A)$ a.s. 
Thus,
\begin{align*}
    X&\stackrel{d}{=}\sum_{k=1}^\infty A_1\ldots A_{k-1}B_k 
    \leq x_0 \sum_{k=1}^\infty A_1\ldots A_{k-1} (1-A_k) = x_0,\quad\mbox{a.s.},
\end{align*}
which implies that $\P(X\leq x_0)=1$, i.e., $x_0\geq x_+$. 
\end{proof}

We have established above that the right endpoint of the support of $X$ coincides with the right endpoint of the conditional distribution of $B/(1-A)$ given $A<1$. Note that if this endpoint is finite, the asymptotic behavior of $t\mapsto\P(X>t)$ as $t\to\infty$ becomes trivial, and this case should thus be excluded from further analysis. It turns out that suitable conditions involving $g(1)$ or $\lambda^\ast$ are sufficient to ensure this exclusion, but only under the assumption that $\P(A\in[0,1))=1$.  In what follows, we explain that restricting our analysis to such distributions does not lead to any significant loss of generality.

 By \cite[Theorem 4.1]{GG96}, if $\P(A>1)>0$, then the tail of $X$ is at least of power‐law type:
\[
\liminf_{t\to\infty}\frac{\log\P(X>t)}{\log t}>-\infty.
\]
Since this behavior lies outside the light-tail regime we consider, we henceforth restrict to the case where $A\in[0,1]$ a.s. Moreover, when $\P(A=1)>0$, and under suitable nondegeneracy conditions, \cite[Theorem 1.7]{AIR09} shows that the moment generating function $\E[\exp(tX)]$ is finite if and only if
\[
\E[e^{t B}\1_{A=1}]<1.
\]
In addition, \cite[Lemma 5]{DG06} establishes that
\[
\liminf_{t\to\infty} \frac{-\log\P(X>t)}{t} = \sup\{t\in\R\colon \E[e^{t X}]<\infty\} = \sup\{t\in\R\colon \E[e^{t B}\1_{A=1}]<1\}=:t_0.
\]
If $t_0<\infty$, the analysis for the case $\P(A=1)>0$ is complete. Under $\P(A=1)>0$, we have $t_0=\infty$ if and only if $\P(B=0\mid A=1)=1$.  In this degenerate scenario, one can show that
\[
X \stackrel{d}{=}\tilde{A} X+\tilde{B}, \qquad X\mbox{ and }(\tilde{A},\tilde{B})\mbox{ are independent},
\]
where $(\tilde{A},\tilde{B}) \stackrel{d}{=}(A,B)\mid \{A<1\}$. Indeed, for a bounded continuous function $f$ we have
\begin{align*}
\E[f(X)] &= \E[f(AX+B)] = \E[f(AX+B)\1_{A=1}] + \E[f(AX+B)\1_{A<1}] \\
& = \E[f(X)]\,\P(A=1) + \E[f(\tilde{A}X+\tilde{B})]\,\P(A<1),
\end{align*}
which immediately implies that $\E[f(X)]=E[f(\tilde{A}X+\tilde{B})]$. 

Therefore, to avoid these solved cases, we eventually assume that $\P(A\in[0,1))=1$.

\begin{lemma}\label{lem:g(1)}
Let $g$ be a local dependence measure of $(A,B)$ with $\P(A\in[0,1),B\geq0)=1$ for $f\in\mathcal{R}^\rho$, $\rho>0$. Then:
\begin{enumerate}
    \item[(i)] If $g(1)<\infty$, then $\mathrm{ess}\sup\left( \frac{B}{1-A}\mid A<1\right)=\infty$.
    \item[(ii)] If $\lambda^\ast<\infty$, then $g(1)<\infty$. 
\end{enumerate}
\end{lemma}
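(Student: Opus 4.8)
The plan is to handle the two parts separately, taking the second one first since it follows almost immediately from earlier results. For (ii): suppose $\lambda^\ast<\infty$. By the definition \eqref{eq:lambdastar}, $\inf_{y\in(0,1)}\{g(y)/(1-y^\rho)\}<\infty$, so there exists $y_0\in(0,1)$ with $g(y_0)/(1-y_0^\rho)<\infty$, hence $g(y_0)<\infty$. Since $g$ is a local dependence measure it exists on all of $[0,\infty)$, and since $1>y_0$, Lemma \ref{3}(iii) gives $g(1)\leq g(y_0)<\infty$. (Equivalently, one may invoke Lemma \ref{nowe3.15}(iv) to get $g(0)<\infty$ and then use monotonicity of $g$ from Remark \ref{uwaga_do_3}.)

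For (i), I would argue by contraposition: assuming the conditional essential supremum $x_0:=\mathrm{ess}\sup(B/(1-A)\mid A<1)$ is finite, I will show $g(1)=\infty$. Under the standing hypothesis $\P(A\in[0,1),B\geq 0)=1$, the conditioning event $\{A<1\}$ has full probability and $1-A>0$ a.s., so $x_0<\infty$ is equivalent to the pathwise inequality $B\leq x_0(1-A)$ a.s. Then, for any $t>0$,
\[
\P(At+B>t)=\P(B>t(1-A)),
\]
and on the event $\{B>t(1-A)\}$ the inequality $B\leq x_0(1-A)$ forces $t(1-A)<x_0(1-A)$, i.e. $(x_0-t)(1-A)>0$; since $1-A>0$ a.s., this event is null for every $t>x_0$. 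Hence $\P(At+B>t)=0$ for all $t>x_0$, so that $-\log\P(At+B>t)=+\infty$ for such $t$, and therefore
\[
g(1)=\lim_{t\to\infty}\frac{-\log\P(At+B>t)}{f(t)}=+\infty,
\]
which is the contrapositive of (i).

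The argument is short and the only genuine point of care is the passage between the conditional essential supremum and the pathwise bound $B\le x_0(1-A)$, which relies on $\P(A<1)=1$ (making the conditioning trivial) and on $1-A>0$ a.s.; there is no substantial obstacle beyond this bookkeeping.
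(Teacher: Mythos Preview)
Your proof is correct and follows essentially the same approach as the paper. For (ii) the arguments coincide verbatim; for (i) the paper argues directly (from $g(1)<\infty$ it observes $\P(B/(1-A)>t)>0$ for large $t$), while you phrase the same observation as a contrapositive, but the underlying identity $\{At+B>t\}=\{B/(1-A)>t\}$ under $\P(A<1)=1$ is the only content in both.
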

\begin{proof}
\begin{enumerate}
    \item[(i)]First we notice that since $A<1$ a.s., then $\mathrm{ess}\sup\left( \frac{B}{1-A}\mid A<1\right)=\mathrm{ess}\sup\left( \frac{B}{1-A}\right)$. Moreover,
\[
\lim_{t\to\infty} \frac{-\log\P\left(\frac{B}{1-A}>t\right)}{f(t)} =  \lim_{t\to\infty} \frac{-\log\P(At+B>t)}{f(t)} = g(1)<\infty.
\]
Then for sufficiently large $t$ we get that $\P(\frac{B}{1-A}>t)>0$, which proves the assertion.
    \item[(ii)] Condition $\lambda^\ast<\infty$ implies that there exists $y_0\in(0,1)$ such that $g(y_0)<\infty$. Since $g$ is nonincreasing, we have $g(1)\leq g(y_0)<\infty$. 
\end{enumerate}
\end{proof}

We will also need the following result, which was proved in \cite[Lemma 6.10]{BKT22}.
\begin{lemma}\label{6.10.}
Suppose that $X \overset{d}{=} AX+B$, where $X$ and $(A,B)$ are independent. For any bounded, uniformly continuous function $f$ on $\R$ and any increasing positive integer sequence $(n_k)_k$, a.s.,
\begin{align*}
    \limsup_{m \to\infty}\frac{1}{m}\sum_{k=1}^m f(X_{n_k}) \geq \E[f(X)].
\end{align*}
\end{lemma}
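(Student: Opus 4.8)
The plan is to deduce the statement from the ergodic-type behavior of the Markov chain $(X_n)_{n\ge0}$, started from its stationary distribution. The key observation is that, since $X\stackrel{d}{=}AX+B$ with $X$ independent of $(A,B)$, one may realize $(X_n)_n$ by taking $X_0\stackrel{d}{=}X$ and iterating $X_n=A_nX_{n-1}+B_n$ with i.i.d.\ $(A_n,B_n)$; then $X_n\stackrel{d}{=}X$ for every $n$, and the bivariate sequence $(A_n,B_n,X_{n-1})_n$ is stationary. The chain is driven by the i.i.d.\ sequence $(A_n,B_n)_n$, so the shift on the driving noise is ergodic, and hence the stationary process $(X_n)_n$ is itself ergodic. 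First I would record this and invoke Birkhoff's pointwise ergodic theorem: for any integrable functional, the Cesàro averages converge a.s.\ to the expectation under the stationary law. Applied to the bounded functional $f(X_n)$, this gives $\frac1m\sum_{n=1}^m f(X_n)\to\E[f(X)]$ a.s.

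Next I would pass from the full averages to averages along the subsequence $(n_k)_k$. Here I cannot simply restrict an ergodic average, so instead I would use the forward-coupling / backward-iteration representation of the perpetuity together with uniform continuity of $f$. The standard device (this is essentially the argument in \cite{BKT22}): write $X_n = \sum_{j=1}^{n} A_1\cdots A_{j-1}B_j + A_1\cdots A_n X_0$ using one fixed realization, and compare $X_{n_k}$ with an independent copy built from a disjoint block of the noise. More precisely, introduce, for each $k$, the random variable
\[
\widehat X_k = \sum_{j\ge 1} A_{n_k}A_{n_k-1}\cdots A_{n_k-j+1}\, B_{n_k-j+1},
\]
the backward series using the innovations with indices $\le n_k$; this has law $X$ and $|X_{n_k}-\widehat X_k|\to 0$ a.s.\ provided $n_k\to\infty$ (by a.s.\ convergence of the backward series and $A_1\cdots A_n\to 0$). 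By uniform continuity of $f$, $|f(X_{n_k})-f(\widehat X_k)|\to0$ a.s., so it suffices to prove the claim with $\widehat X_k$ in place of $X_{n_k}$. The hard part is that the $\widehat X_k$ are not independent, since the blocks of innovations used to build them overlap when the gaps $n_{k}-n_{k-1}$ are small.

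To handle the dependence, I would truncate the backward series at a fixed depth $L$: set $\widehat X_k^{(L)}=\sum_{j=1}^{L}A_{n_k}\cdots A_{n_k-j+1}B_{n_k-j+1}$, which depends only on the innovations with indices in $(n_k-L,n_k]$. Passing to a sub-subsequence of $(n_k)$ along which the gaps exceed $L$, these truncated variables \emph{are} independent and identically distributed (law of $\widehat X^{(L)}$, a fixed distribution depending on $L$), so by the strong law of large numbers $\frac1m\sum_{k\le m} g(\widehat X_k^{(L)})\to \E[g(\widehat X^{(L)})]$ a.s.\ for bounded continuous $g$. Since $\widehat X^{(L)}\uparrow X$ a.s.\ (monotone convergence of the nonnegative series) and $f$ is bounded and continuous, $\E[f(\widehat X^{(L)})]\to\E[f(X)]$ as $L\to\infty$; combining with the uniform-continuity estimate $|f(\widehat X_k)-f(\widehat X_k^{(L)})|$, which is small with high probability uniformly in $k$ once $L$ is large (because $\widehat X_k-\widehat X_k^{(L)}=A_{n_k}\cdots A_{n_k-L+1}(\cdots)$ is small in probability uniformly in $k$ by stationarity), yields
\[
\limsup_{m\to\infty}\frac1m\sum_{k=1}^{m}f(\widehat X_k)\ \ge\ \E[f(X)]-\eps
\]
for every $\eps>0$ along that sub-subsequence, hence along the original $(n_k)$ since $\limsup$ over a subsequence of a subsequence bounds the $\limsup$ from below is not automatic — so I would instead run the truncation/gap argument directly: choose $L=L(\eps)$, discard from $\{1,\dots,m\}$ the (asymptotically negligible density of) indices $k$ where $n_k-n_{k-1}\le L$ or the truncation error is large, apply the SLLN to the retained block, and let $\eps\downarrow0$. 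I expect this bookkeeping — quantifying "asymptotically negligible fraction of bad indices" and making the truncation uniform in $k$ — to be the only real obstacle; everything else is ergodic theory plus the explicit series representation of the perpetuity.
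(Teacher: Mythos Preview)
The paper does not prove this lemma; it merely quotes it from \cite[Lemma~6.10]{BKT22}. So there is no in-paper argument to compare against, and I assess your proposal on its own merits.

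There is a genuine gap in your plan. After truncating the backward series at depth $L$, you propose to ``discard from $\{1,\dots,m\}$ the (asymptotically negligible density of) indices $k$ where $n_k-n_{k-1}\le L$.'' For an arbitrary increasing sequence $(n_k)_k$ this density need not be negligible: if $n_k=k$ then every gap equals $1$ and \emph{every} index is bad, for any $L\ge 1$. You already noticed that your first fallback---passing to a sub-subsequence with gaps exceeding $L$---is useless, since a lower bound on the $\limsup$ along a thinner subsequence says nothing about the original $\limsup$. (The truncation idea \emph{can} be rescued: partition $\{1,\dots,m\}$ according to the residue of $n_k$ modulo $L$; within each class the windows $(n_k-L,n_k]$ are pairwise disjoint, so the truncated variables are i.i.d.\ and one applies the SLLN classwise. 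But this is not what you wrote, and it still leaves the a.s.\ control of the truncation error $\tfrac{1}{m}\sum_k|f(X_{n_k})-f(\widehat X_k^{(L)})|$ to be carried out.)

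A much shorter route avoids truncation entirely. Couple two chains driven by the same innovations but with different initial values: $X_n-X_n'=\big(\prod_{i=1}^{n}A_i\big)(X_0-X_0')\to0$ a.s., so by uniform continuity of $f$ the quantity $L:=\limsup_{m}\tfrac{1}{m}\sum_{k\le m}f(X_{n_k})$ does not depend on $X_0$; the same computation shows $L$ is unchanged if finitely many $(A_i,B_i)$ are altered. Hence $L$ is measurable with respect to the tail $\sigma$-field of the i.i.d.\ sequence $((A_n,B_n))_n$ and is a.s.\ constant by Kolmogorov's zero--one law. Now run the chain from stationarity, $X_0\stackrel{d}{=}X$: each $X_{n_k}$ has law $X$, and reverse Fatou (boundedness of $f$) gives $\E[L]\ge \limsup_m \E\big[\tfrac{1}{m}\sum_{k\le m}f(X_{n_k})\big]=\E[f(X)]$. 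Since $L$ is a.s.\ equal to its mean, $L\ge \E[f(X)]$ a.s.
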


\section{\texorpdfstring{Tails of $X\stackrel{d}{=}AX+B$}{Tails of X=AX+B}}\label{sec:tailmain}

Throughout this section, we assume that $\P(A\in[0,1))=1$ (and thus $\E[\log A]<0$) and $\E[\max\{\log B,0\}]<\infty$, which implies that there exists a unique solution $X$ to 
\[
X \overset{d}{=} AX+B,\qquad\mbox{$X$ and $(A,B)$  are independent}.
\]
Recall that our standing assumption is that both $A$ and $B$ are nonzero. The main result in this section is as follows.

 \begin{thm}\label{theorem4.1}
Let $g$ be an admissible $\LDM_f^\rho$ for the nonnegative random variables $(A,B)$, where $f \in\ \mathcal{R}^{\rho}$ and $\rho > 0$. Then $X$ is $\ED_{f}(\lambda^\ast)$, where $\lambda^\ast\in[0,\infty]$ is defined in \eqref{eq:lambdastar}.
\end{thm}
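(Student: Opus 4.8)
The plan is to sandwich $X$ between two sequences $(X_n)$ and $(X_n')$ of the form $X_n = A_n X_{n-1} + B_n$, exploiting the monotonicity established in Corollary \ref{cor:monotone}, and then transfer the $\ED_f$-asymptotics through the recursion using Theorem \ref{nowe3.9}. Concretely, since $\P(A\in[0,1))=1$ and $B$ is nonzero, one checks $X_1 = B \geq_{st} 0 = X_0$ when we start the recursion at $X_0 = 0$; by the lemma preceding Corollary \ref{cor:monotone} the sequence $X_n$ is stochastically increasing and $X \geq_{st} X_n$ for every $n$. This yields the lower bound direction: $\P(X>t)\geq \P(X_n>t)$, so $\limsup_{t\to\infty}\frac{-\log\P(X>t)}{f(t)} \leq \liminf_{t\to\infty}\frac{-\log\P(X_n>t)}{f(t)}$ for each fixed $n$. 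Starting from $X_0=0$, which is $\ED_f(\infty)$, one applies Theorem \ref{nowe3.9} inductively: $X_1 = B$ is $\ED_f(\phi_\rho(\infty))$... more carefully, $X_0$ being bounded is $\ED_f(\infty)$, so $X_1$ is $\ED_f(\phi_\rho(\infty)) = \ED_f(g(0^+))$ by a limiting version of Theorem \ref{nowe3.9}, and then $X_n$ is $\ED_f(\phi_\rho^{\circ n}(\infty))$. Since $\phi_\rho$ is continuous, nondecreasing, concave with $\phi_\rho(0)=0$, and (admissibility) either $\phi_\rho\equiv 0$ or $\phi_\rho(\lambda)>\lambda$ for some $\lambda>0$, the iterates $\phi_\rho^{\circ n}(\infty)$ decrease to the largest fixed point, which by Lemma \ref{nowe3.15}(i)–(ii) is exactly $\lambda^\ast$ when $\lambda^\ast<\infty$. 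Letting $n\to\infty$ gives $\limsup_{t\to\infty}\frac{-\log\P(X>t)}{f(t)} \leq \lambda^\ast$.

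For the matching lower bound $\liminf_{t\to\infty}\frac{-\log\P(X>t)}{f(t)} \geq \lambda^\ast$, I would argue that $X$ itself, being a genuine solution of $X\stackrel{d}{=}AX+B$, satisfies a self-consistency that forces its decay exponent $\mu := \liminf \frac{-\log\P(X>t)}{f(t)}$ (and the corresponding $\limsup$) to be a fixed point of $\phi_\rho$, or at least to satisfy $\phi_\rho(\mu)\geq\mu$, hence $\mu\leq\lambda^\ast$ is impossible to improve only in one direction — wait, I need $\mu \geq \lambda^\ast$. The cleaner route: since $X\stackrel{d}{=}AX+B$ with $X$ on the right independent of $(A,B)$, if $X$ were $\ED_f(\mu)$ for some finite $\mu$, Theorem \ref{nowe3.9} applied to $Y=X$ gives that $AX+B\stackrel{d}{=}X$ is $\ED_f(\phi_\rho(\mu))$, whence $\phi_\rho(\mu)=\mu$, and by Lemma \ref{nowe3.15} the only positive fixed point is $\lambda^\ast$ (uniqueness from Lemma \ref{lem:phi-prop}(v) under admissibility), so $\mu=\lambda^\ast$. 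The technical issue is that a priori $X$ need not be $\ED_f$ (the limit might not exist); so instead I would work with $\underline\mu=\liminf$ and $\overline\mu=\limsup$ separately. Using the upper bound $\overline\mu\leq\lambda^\ast$ already proved, it remains to show $\underline\mu\geq\lambda^\ast$. For this I would use the lower-envelope-type comparison from the other side: bound $X \leq_{st} X_n'$ is not available with a convenient finite starting point, so instead one exploits that $\P(X>t) = \P(AX+B>t)$ directly and runs the lower-bound half of the proof of Theorem \ref{nowe3.9} with $Y=X$ and $\lambda$ replaced by $\underline\mu$ along a subsequence realizing the liminf, obtaining $\underline\mu \geq \phi_\rho(\underline\mu)$... but that gives the wrong inequality. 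The correct extraction: along a sequence $t_k\to\infty$ with $\frac{-\log\P(X>t_k)}{f(t_k)}\to\underline\mu$, use the \emph{upper} bound $\P(X>t)\leq$ (bound from $\P(AX+B>t)$ involving $g$ and the tail of $X$) to show $\underline\mu \leq \phi_\rho(\overline\mu)$ and, symmetrically, $\overline\mu\geq\phi_\rho(\underline\mu)$; combined with monotonicity and concavity of $\phi_\rho$ and $\overline\mu\leq\lambda^\ast$, these pin both quantities to $\lambda^\ast$.

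Thus the key steps, in order, are: (1) set up the stochastic monotone approximation $X_n\uparrow_{st} X$ from $X_0=0$ via Corollary \ref{cor:monotone}; (2) iterate Theorem \ref{nowe3.9} (with its natural extension to the $\ED_f(\infty)$ starting point) to get $X_n$ is $\ED_f(\phi_\rho^{\circ n}(\infty))$; (3) analyze the dynamical system $\lambda\mapsto\phi_\rho(\lambda)$ using Lemma \ref{lem:phi-prop} and Lemma \ref{nowe3.15} to show $\phi_\rho^{\circ n}(\infty)\downarrow\lambda^\ast$, giving the upper bound on $-\log\P(X>t)/f(t)$; (4) establish the reverse inequality by a fixed-point argument applied directly to the identity $X\stackrel{d}{=}AX+B$, carefully phrased in terms of $\liminf$ and $\limsup$ to sidestep the possible nonexistence of the limit, and using admissibility (Lemma \ref{lem:phi-prop}(v), Lemma \ref{nowe3.15}(iv)) to rule out spurious fixed points and to handle the boundary cases $\lambda^\ast=0$ ($g(0)=0$) and $\lambda^\ast=\infty$ ($g(0)=\infty$, where $X$ has lighter-than-$f$ tails, i.e. is $\ED_f(\infty)$). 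I expect step (4) — forcing the $\liminf$ up to $\lambda^\ast$ without assuming the limit exists — to be the main obstacle, since it requires running a two-sided version of the estimates in the proof of Theorem \ref{nowe3.9} and combining them with the convergence of $\phi_\rho$-iterates; the case analysis $\rho\in(0,1]$ versus $\rho>1$ from that proof will resurface here, and the degenerate cases $g(0)\in\{0,\infty\}$ must be checked separately.
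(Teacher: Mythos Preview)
Your steps (1)--(3) do give a legitimate alternative route to the inequality $\limsup_{t\to\infty}\frac{-\log\P(X>t)}{f(t)}\le\lambda^\ast$ when $g(0)<\infty$: start at $X_0=0$, observe $X_1=B$ is $\ED_f(g(0))$, iterate Theorem~\ref{nowe3.9}, and use monotonicity and continuity of $\phi_\rho$ together with $\phi_\rho(\lambda^\ast)=\lambda^\ast\le g(0)$ to conclude that $\phi_\rho^{\circ n}(g(0))\downarrow\lambda^\ast$. The paper instead argues directly from the identity $X\stackrel{d}{=}AX+B$ (Lemma~\ref{lemma4.5}) after first establishing an a~priori bound $\limsup<\infty$ by a telescoping argument (Lemma~\ref{lem:s<oo}); your iteration-from-zero bypasses the latter step, which is a nice simplification. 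Note, however, that Theorem~\ref{nowe3.9} is stated only for finite $\lambda$, so you should start the iteration at $X_1$ (exponent $g(0)$), not at $X_0$ (exponent $\infty$); and when $g(0)=\infty$ (equivalently $\lambda^\ast=\infty$) the upper bound is vacuous and your steps (1)--(3) carry no information.

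The real gap is step (4). Running the lower-bound half of Theorem~\ref{nowe3.9} with $\lambda=\underline\mu:=\liminf\frac{-\log\P(X>t)}{f(t)}$ does yield $\phi_\rho(\underline\mu)\le\underline\mu$, but combined with Lemma~\ref{nowe3.15}(i) this only forces $\underline\mu$ to be a fixed point of $\phi_\rho$, i.e.\ $\underline\mu\in\{0,\lambda^\ast\}$ under admissibility. Nothing in your outline rules out $\underline\mu=0$, and the cross-inequalities you propose between $\underline\mu$ and $\overline\mu$ do not close this gap either. The paper's key idea here, which you are missing, is a construction from \emph{above}: for each $\kappa>0$ with $\phi_\rho(\kappa)>\kappa$ one builds an explicit $\ED_f(\kappa)$-random variable $Z_\kappa$ satisfying $Z_\kappa\ge_{st}AZ_\kappa+B$ (Lemma~\ref{lem:Zk}; take $\P(Z_0>t)=e^{-\kappa f(t)}$ and condition on $\{Z_0>M\}$ for large $M$). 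Starting the recursion at $X_0'=Z_\kappa$ gives $X_1'\le_{st}X_0'$ and hence $X\le_{st}Z_\kappa$ by Corollary~\ref{cor:monotone}, so $\liminf\ge\kappa$. Taking the supremum over admissible $\kappa$ and invoking Lemma~\ref{lem:phi-prop}(v) gives $\liminf\ge\lambda^\ast$ directly (Lemma~\ref{lemma4.3}), handling in one stroke the cases $\lambda^\ast<\infty$ and $\lambda^\ast=\infty$. This stochastic-domination-from-above is the ingredient your proposal lacks.
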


The proof of Theorem \ref{theorem4.1} will rely on several lemmas.
\begin{lemma}\label{lem:Zk}
If there exists $\kappa>0$ such that $\kappa<\phi_\rho(\kappa)$, then there exists  a nonnegative $\ED_f(\kappa)$-random variable $Z_\kappa$ such that 
\begin{align}\label{**}
Z_\kappa \geq_{st} AZ_\kappa+B,\qquad Z_\kappa\mbox{ and }(A,B)\mbox{ are independent}.
\end{align}
\end{lemma}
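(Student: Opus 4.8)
The plan is to exhibit $Z_\kappa$ explicitly as a ``thresholded'' exponential‑$f$ tail and to check the two required properties by hand, the stochastic domination being the substantial part. I would first record three consequences of the hypothesis $\kappa<\phi_\rho(\kappa)$. Taking $y=a_+^{-1}$ in the definition of $\phi_\rho$ and using $g(a_+^{-1})=0$ (Lemma~\ref{3}(i) and Remark~\ref{uwaga_do_3}) gives $\phi_\rho(\kappa)\le\kappa a_+^{-\rho}$, whence $a_+=\ess\sup(A)<1$; Lemma~\ref{lem:phi-prop}(iv) gives $\phi_\rho(\kappa)\le\sup_{y>0}g(y)\le g(0)$, whence $g(0)>\kappa$; and $\phi_\rho(\kappa)>\kappa$ gives $\kappa<\lambda^\ast$ by Lemma~\ref{nowe3.15}, so $g(y)>\kappa(1-y^\rho)$ for all $y\in(0,1)$. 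Modifying $f$ near the origin (which alters neither $\ED_f$ nor $\LDM^\rho_f$), I assume $f\colon[0,\infty)\to[0,\infty)$ continuous, strictly increasing, with $f(0)=0$.

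Construction: for a constant $c_0>0$ to be fixed, let $Z_\kappa$ be nonnegative, independent of $(A,B)$, with survival function
\[
\P(Z_\kappa>t)=\min\bigl\{1,\ e^{\kappa f(c_0)-\kappa f(t)}\bigr\}=\begin{cases}1,& t\le c_0,\\[1mm] e^{-\kappa(f(t)-f(c_0))},& t>c_0.\end{cases}
\]
This is a genuine survival function, $Z_\kappa>c_0$ a.s.\ with density $\kappa f'(w)e^{-\kappa(f(w)-f(c_0))}$ on $(c_0,\infty)$, and $\frac{-\log\P(Z_\kappa>t)}{f(t)}\to\kappa$, so $Z_\kappa$ is $\ED_f(\kappa)$ for \emph{every} $c_0$. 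Thus it only remains to choose $c_0$ large enough that $\P(AZ_\kappa+B>t)\le\P(Z_\kappa>t)$ for all $t$; this is trivial for $t\le c_0$, so fix $t>c_0$, where the claim reads $\P(AZ_\kappa+B>t)\le e^{\kappa f(c_0)}e^{-\kappa f(t)}$.

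For $t>c_0$ I would estimate $\P(AZ_\kappa+B>t)=\int_{c_0}^{\infty}\P(Aw+B>t)\,\P(Z_\kappa\in dw)$ using the \emph{uniform} bound $\P(Aw+B>t)\le\P(a_+w+B>t)=\P(B>t-a_+w)$, valid for $w<t/a_+$ since $A\le a_+$ a.s.\ (this also absorbs the contribution of $\{A=0\}$). Splitting the integral at $w=t/a_+$, the outer piece is at most $e^{\kappa f(c_0)}\int_{t/a_+}^{\infty}\kappa f'(w)e^{-\kappa f(w)}\,dw=e^{\kappa f(c_0)}e^{-\kappa f(t/a_+)}$, which is $o\bigl(e^{\kappa f(c_0)}e^{-\kappa f(t)}\bigr)$ because $f(t/a_+)\sim a_+^{-\rho}f(t)$ with $a_+^{-\rho}>1$ (here $a_+<1$ is crucial). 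For the inner piece I would integrate by parts after the substitution $v=t-a_+w$, getting a boundary term $\,\le e^{\kappa f(c_0)}\P(B>t-a_+c_0)$ — which is $o\bigl(e^{-\kappa f(t)}\bigr)$ since $c_0$ is \emph{fixed}, so $t-a_+c_0\sim t$, and $g(0)>\kappa$ — plus $e^{\kappa f(c_0)}\int_0^{t-a_+c_0}e^{-\kappa f((t-v)/a_+)}\,\mu_B(dv)$, which one splits at $v=(1-\eps)t$ for a fixed small $\eps>0$: on $v\le(1-\eps)t$ the integrand is $\le e^{-\kappa f(\eps t/a_+)}=o(e^{-\kappa f(t)})$ while the $\mu_B$–mass is $\le1$, and on $(1-\eps)t<v<t-a_+c_0$ the $\mu_B$–mass is $\le\P(B>(1-\eps)t)\le e^{-(g(0)-o(1))(1-\eps)^\rho f(t)}=o(e^{-\kappa f(t)})$ once $g(0)(1-\eps)^\rho>\kappa$ (possible since $g(0)>\kappa$). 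Collecting terms, there is $T_\ast=T_\ast(c_0)$, growing at most linearly in $c_0$, with $\P(AZ_\kappa+B>t)\le e^{\kappa f(c_0)}e^{-\kappa f(t)}$ for all $t\ge T_\ast$.

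The main obstacle is the \emph{intermediate band} $c_0<t<T_\ast(c_0)$, where neither Theorem~\ref{nowe3.9} nor the regular variation of $f$ and of $-\log\P(B>\cdot)$ is yet effective. Here I would exploit that $c_0$ may be taken arbitrarily large while $T_\ast(c_0)\le Cc_0$ for a fixed constant $C$, so that on the band $\P(Z_\kappa>t)\ge e^{-\kappa(f(Cc_0)-f(c_0))}$, which, although small, decays strictly more slowly than $\P(B>t)$ for $t$ in this range because $g(0)>\kappa$; combined with $a_+<1$ (so that $AZ_\kappa$ cannot by itself bridge the gap to $t$ with more than a tail‑like probability) and with the elementary lower bound $\P(AZ_\kappa+B\le t)\ge\P(AZ_\kappa+B\le c_0)\ge p_\ast$, where $p_\ast>0$ can be taken independent of $c_0$ (choose $A$ small, $Z_\kappa$ near its essential infimum $c_0$, and $B\le c_0/2$; the first two have fixed positive probability for $c_0$ large since $a_+<1$, and $\P(B\le c_0/2)\to1$), one concludes $\P(AZ_\kappa+B>t)\le\P(Z_\kappa>t)$ on the band as well, provided $c_0$ is chosen large enough. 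Fixing such a $c_0$ completes the construction and establishes \eqref{**}.
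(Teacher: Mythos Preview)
Your construction of $Z_\kappa$ is exactly the one the paper uses: if $Z_0$ is defined by $\P(Z_0>t)=e^{-\kappa f(t)}$, then your $Z_\kappa$ is nothing but $Z_0$ conditioned on $\{Z_0>c_0\}$, and the $\ED_f(\kappa)$ verification is fine. The difficulty is entirely in the stochastic domination, and there your argument has a genuine gap.

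The problematic step is the ``intermediate band'' $c_0<t<T_\ast(c_0)$. First, the claim $T_\ast(c_0)\le Cc_0$ is asserted but not established; tracing through your estimates, the boundary term requires $g(0)\,f(t-a_+c_0)\ge\kappa f(t)$ (up to $o(1)$), which for $t=\theta c_0$ forces $\theta\ge a_+/(1-(\kappa/g(0))^{1/\rho})$, so $C$ depends on $a_+$ and $g(0)$ in a way you never pin down. Second, and more seriously, even granting $T_\ast\le Cc_0$, the ingredients you list on the band do not combine to give $\P(AZ_\kappa+B>t)\le\P(Z_\kappa>t)$. The bound $\P(AZ_\kappa+B\le t)\ge p_\ast$ only yields $\P(AZ_\kappa+B>t)\le 1-p_\ast$, which is useless against the target $\P(Z_\kappa>t)\approx e^{-\kappa(C^\rho-1)f(c_0)}$; the latter is exponentially small in $f(c_0)$. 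To close the gap you would need, e.g., $g(0)(1-a_+)^\rho\ge\kappa(C^\rho-1)$ and $a_+^{-\rho}-1\ge C^\rho-1$, neither of which follows from $\kappa<\phi_\rho(\kappa)$ for the $C$ your estimates produce. The circularity is real: $c_0$ must be large to handle the band, but enlarging $c_0$ enlarges the band.

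The paper sidesteps all of this with a two-line device you are missing. It first applies Theorem~\ref{nowe3.9} to the \emph{unthresholded} $Z_0$: since $Z_0$ is $\ED_f(\kappa)$, $AZ_0+B$ is $\ED_f(\phi_\rho(\kappa))$ with $\phi_\rho(\kappa)>\kappa$, so there is a \emph{fixed} $M$ with $\P(AZ_0+B>t)\le\P(Z_0>t)$ for all $t\ge M$. Only then is $Z_\kappa$ defined as $Z_0\mid\{Z_0>M\}$, and one uses the elementary conditioning bound
\[
\P(AZ_\kappa+B>t)=\frac{\P(AZ_0+B>t,\ Z_0>M)}{\P(Z_0>M)}\le\frac{\P(AZ_0+B>t)}{\P(Z_0>M)}\le\frac{\P(Z_0>t)}{\P(Z_0>M)}=\P(Z_\kappa>t)
\]
for $t\ge M$, while $\P(Z_\kappa>t)=1$ for $t<M$. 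There is no intermediate band because the threshold $M$ is determined \emph{before} $Z_\kappa$ is constructed; you are instead trying to choose the threshold and verify the domination simultaneously, which is why the argument loops. Invoking Theorem~\ref{nowe3.9} also makes your hand estimates (splitting at $t/a_+$, integrating by parts, the $(1-\eps)t$ cut) entirely unnecessary.
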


\begin{proof}
Let $Z_0$ be a nonnegative random variable independent of $(A, B)$, with the distribution defined by (recall that, without loss of generality, we assumed that $f$ is strictly increasing and continuous)
\begin{align*}
\P(Z_0 > t) = \exp(-\kappa f(t)),\quad t>0.
\end{align*}
Clearly, $Z_0$ is $\ED_f(\kappa)$.

Thus, by Theorem \ref{nowe3.9}, we have
\begin{align*}
\lim_{t \to\infty} \frac{-\log\P(AZ_0+B>t)}{f(t)} = \phi_{\rho}(\kappa) > \kappa = \lim_{t \to\infty}\frac{-\log \P(Z_0>t)}{f(t)}, 
\end{align*}
which implies that 
there exists $M>0$ such that 
\begin{align}\label{(2.2)}
\forall\, t\geq M \,\,\, \P(Z_0>t) \geq \P(AZ_0+B>t). 
\end{align}
Let $Z_\kappa$ be a random variable, independent of $(A, B)$, with the distribution defined by 
\begin{align*}
\P(Z_\kappa \in \cdot) = \P(Z_0 \in \cdot \mid Z_0 > M). 
\end{align*}
For $t\geq M$, we obtain
\begin{align*}
\P(AZ_\kappa+B > t) & = \P(AZ_0+B >t\mid Z_0 > M)  \leq \frac{\P(AZ_0+B>t)}{\P(Z_0>M)} \\& 
\stackrel{\eqref{(2.2)}}{\leq} \frac{\P(Z_0>t)}{\P(Z_0>M)} = \P(Z_0 > t \mid Z_0>M)  =\P(Z_\kappa>t),
\end{align*}
while for $t< M$, we have $\P(Z_\kappa > t)= 1\geq \P(A Z_\kappa+B>t)$. Thus, 
\eqref{**} holds true.

For $t\geq M$, we have as $t\to\infty,$
\begin{align*}
\frac{-\log \P(Z_\kappa>t)}{f(t)}  = \frac{-\log \P(Z_0 > t \mid Z_0>M)}{f(t)}  = \frac{-\log \P(Z_0>t)+\log \P(Z_0>M)}{f(t)} \to \kappa.
\end{align*}
\end{proof}

\begin{lemma}\label{lemma4.3}
Under the assumptions of Theorem \ref{theorem4.1}, 
\[
\liminf_{t \to\infty} \left(-f(t)^{-1} \log \P(X>t) \right)\geq \lambda^\ast.
\]
\end{lemma}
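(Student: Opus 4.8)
\textbf{Proof plan for Lemma \ref{lemma4.3}.}

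The plan is to compare $X$ with the stochastically majorizing sequence generated by the random variable $Z_\kappa$ constructed in Lemma \ref{lem:Zk}, and then optimize over $\kappa$. First I would dispose of the trivial case: if $g(0)=0$, then by the admissibility definition $\phi_\rho\equiv 0$, so $\lambda^\ast=0$ by Lemma \ref{nowe3.15} (iv) (indeed $\lambda^\ast=\infty\iff g(0)=\infty$ is false here, and one checks $\lambda^\ast=0$), and the claimed inequality $\liminf(-f(t)^{-1}\log\P(X>t))\geq 0$ is automatic. So assume $g(0)>0$; by admissibility there exists $\kappa>0$ with $\phi_\rho(\kappa)>\kappa$, and by Lemma \ref{nowe3.15} (i) such $\kappa$ are exactly those with $\kappa<\lambda^\ast$ (more precisely $\kappa\leq\lambda^\ast$ gives $\phi_\rho(\kappa)\geq\kappa$, and $\phi_\rho(\kappa)>\kappa$ on $(0,\lambda^\ast)$ since equality can hold for at most one point by Lemma \ref{lem:phi-prop} (v), namely $\kappa=\lambda^\ast$). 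Thus for every $\kappa\in(0,\lambda^\ast)$ Lemma \ref{lem:Zk} supplies a nonnegative $\ED_f(\kappa)$-random variable $Z_\kappa$ independent of $(A,B)$ with $Z_\kappa\geq_{st}AZ_\kappa+B$.

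Next I would run the affine recursion started from $Z_\kappa$: set $Z_\kappa^{(0)}=Z_\kappa$ and $Z_\kappa^{(n)}=A_nZ_\kappa^{(n-1)}+B_n$ with $(A_n,B_n)_n$ i.i.d.\ copies of $(A,B)$. Since $Z_\kappa^{(1)}\stackrel{d}{=}AZ_\kappa+B\leq_{st}Z_\kappa=Z_\kappa^{(0)}$, Corollary \ref{cor:monotone} applies (the hypotheses of that corollary are met because $Z_\kappa^{(n)}$ converges in distribution to the unique solution $X$ of \eqref{eq:affine}, by the existence theorem of Section \ref{sec:X=AX+B}, using $\E[\log A]<0$ and $\E[\max\{\log B,0\}]<\infty$): hence $X\leq_{st}Z_\kappa^{(n)}\leq_{st}Z_\kappa$ for every $n$, and in particular $\P(X>t)\leq\P(Z_\kappa>t)$ for all $t$. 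Therefore
\[
\liminf_{t\to\infty}\left(-f(t)^{-1}\log\P(X>t)\right)\geq\liminf_{t\to\infty}\left(-f(t)^{-1}\log\P(Z_\kappa>t)\right)=\kappa,
\]
the last equality being the $\ED_f(\kappa)$ property of $Z_\kappa$. Finally, letting $\kappa\uparrow\lambda^\ast$ yields the desired bound; if $\lambda^\ast=\infty$ (equivalently $g(0)=\infty$ by Lemma \ref{nowe3.15} (iv)) the same argument with $\kappa\to\infty$ shows the liminf is $\infty$.

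I expect the main obstacle to be purely bookkeeping rather than conceptual: one must make sure the chain $Z_\kappa\geq_{st}AZ_\kappa+B$ truly propagates to $X\leq_{st}Z_\kappa$. This requires that the monotone-majorization lemma be applicable, i.e.\ that the recursion driven by independent copies of $(A,B)$ and started at $Z_\kappa$ converges in distribution to $X$ — which holds because $X$ is the unique stationary solution and the convergence-in-distribution statement in the existence theorem is unconditional on the (independent) initial law. A secondary point to handle carefully is the reduction to $\P(A\in[0,1))=1$, which is the standing assumption of this section, so that the existence theorem and Corollary \ref{cor:monotone} apply without extra hypotheses; and one should note that in the degenerate admissible case $g(0)=0$ there is simply nothing to prove since $\lambda^\ast=0$.
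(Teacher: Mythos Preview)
Your proposal is correct and follows essentially the same approach as the paper: construct $Z_\kappa$ via Lemma~\ref{lem:Zk}, start the recursion at $X_0=Z_\kappa$, invoke Corollary~\ref{cor:monotone} to obtain $X\leq_{st}Z_\kappa$, deduce $\liminf_{t\to\infty}(-f(t)^{-1}\log\P(X>t))\geq\kappa$, and then optimize over $\kappa$. The paper phrases the final step as identifying $\sup\{\kappa>0:\phi_\rho(\kappa)>\kappa\}=\lambda^\ast$ via Lemmas~\ref{lem:phi-prop}(v) and~\ref{nowe3.15}, while you argue directly that this set equals $(0,\lambda^\ast)$; the content is identical.
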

\begin{proof}
The inequality holds trivially in the case $\lambda^\ast  = 0$ and in the case where $X$ has a bounded support (so that $\ess\sup\left(\frac{B}{1-A}\mid A<1\right)<\infty$, recall Lemma \ref{ess_supX}), in which we have $\lambda^\ast=\infty$. 

Assume that $\lambda^\ast \in(0,\infty]$ and $\ess\sup\left(\frac{B}{1-A}\mid A<1\right)=\infty$, which, by Lemma \ref{ess_supX}, implies that $\P(X>t)>0$ for any $t\in\R$. 
Moreover, since $\lambda^\ast >0$, we must have $g(0)>0$. The admissibility of $g$ ensures that there exists $\kappa > 0$ such that $\kappa < \phi_\rho(\kappa)$. 
Let $Z_\kappa$ be a random variable whose distribution is constructed in Lemma \ref{lem:Zk}. Let $X_0 = Z_\kappa$. From \eqref{**}, we obtain  $X_1 \leq_{st} X_0$, 
which, by Corollary \ref{cor:monotone},  implies  
\begin{align*}
\forall\, t \in \R \quad \frac{-\log \P(X>t)}{f(t)} \geq \frac{-\log \P(X_0>t)}{f(t)}. 
\end{align*}
Taking $\liminf_{t \to\infty}$ on both sides, we get 
\begin{align*}
\liminf_{t \to\infty} \frac{-\log \P(X>t)}{f(t)} \geq \liminf_{t \to\infty}\frac{-\log \P(X_0>t)}{f(t)} =  \kappa. 
\end{align*}
Therefore, 
\begin{align*}
\liminf_{t \to\infty}\frac{-\log \P(X>t)}{f(t)} \geq \sup \left\{\kappa >0\colon  \phi_\rho(\kappa) > \kappa \right\}.
\end{align*}
By Lemma \ref{nowe3.15}, we have 
\[
\left\{c \geq 0\colon \phi_\rho(c) \geq c \right\} = \begin{cases}
    [0,\lambda^\ast],& \lambda^\ast<\infty,\\
    [0,\infty),& \lambda^\ast=\infty. 
\end{cases}
\]
However, by Lemma \ref{lem:phi-prop} (v), the set $\left\{c\geq 0\colon  \phi_\rho(c) = c \right\}$ contains at most two elements. Therefore, 
\[
\sup\left\{\kappa > 0\colon  \phi_\rho(\kappa) > \kappa \right\} = \sup\left\{c \geq 0\colon \phi_\rho(c) \geq c \right\} =\lambda^\ast. 
\]
\end{proof}

\begin{lemma}\label{lemma4.5}
Under the assumptions of Theorem \ref{theorem4.1}, if additionally 
\[
s = \limsup_{t \to\infty} \left(-f(t)^{-1}\log \P(X>t)\right) < \infty,
\]
then $s \leq \lambda^\ast $.
\end{lemma}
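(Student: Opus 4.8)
The plan is to use the fixed-point property $X\stackrel{d}{=}AX+B$ directly, reusing the elementary lower bound on $\P(AX+B>t)$ from the proof of Theorem~\ref{nowe3.9}. First I would record that the hypothesis $s<\infty$ forces $\P(X>t)>0$ for every $t\in\R$ (otherwise $-f(t)^{-1}\log\P(X>t)\to\infty$ along a tail, contradicting $s<\infty$), so that all the logarithmic manipulations below are meaningful; when $g(y)=\infty$ for some $y$ the corresponding inequality will simply be vacuous, so no case distinction is really needed.

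Next, for any $y>0$ and $t>0$, since $A\geq0$ and $X$ is independent of $(A,B)$, the inclusion $\{Aty+B>t\}\cap\{X>ty\}\subseteq\{AX+B>t\}$ gives, exactly as in the derivation of \eqref{eq:upperbound},
\[
\P(X>t)=\P(AX+B>t)\ \geq\ \P(Aty+B>t)\,\P(X>ty).
\]
Taking $-\log$, dividing by $f(t)$, and writing $-\log\P(X>ty)/f(t)=\bigl(-\log\P(X>ty)/f(ty)\bigr)\cdot\bigl(f(ty)/f(t)\bigr)$, I would pass to $\limsup_{t\to\infty}$. Here $-\log\P(Aty+B>t)/f(t)\to g(y)$ by the definition of $\LDM_f^\rho$; $f(ty)/f(t)\to y^\rho$ since $f\in\mathcal{R}^\rho$; and $\limsup_{t\to\infty}\bigl(-\log\P(X>ty)/f(ty)\bigr)=s$ because $ty\to\infty$ as $t\to\infty$. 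Since $-\log\P(X>ty)\geq0$ and $f(ty)/f(t)\to y^\rho>0$, the $\limsup$ of the product equals $s\,y^\rho$, so superadditivity of $\limsup$ yields
\[
s\ \leq\ g(y)+s\,y^\rho\qquad\text{for all }y>0.
\]

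Finally, restricting to $y\in(0,1)$, where $1-y^\rho>0$, gives $s\leq g(y)/(1-y^\rho)$ (trivially true if $g(y)=\infty$), and taking the infimum over $y\in(0,1)$ yields $s\leq\lambda^\ast$ by the definition \eqref{eq:lambdastar}. I do not anticipate a genuine obstacle here: the only mildly delicate points are the positivity remark above (legitimizing the logarithms) and the elementary arithmetic of $\limsup$ for a product of a nonnegative sequence with a convergent strictly positive one. Note that admissibility of $g$ is not used in this step — it enters only in Lemma~\ref{lemma4.3} and when assembling the two one-sided bounds into Theorem~\ref{theorem4.1}.
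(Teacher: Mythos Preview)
Your argument is correct and follows the same route as the paper's proof: the same lower bound $\P(X>t)\geq\P(Aty+B>t)\,\P(X>ty)$, the same passage to $\limsup$ using $f\in\mathcal{R}^\rho$ to obtain $s\leq g(y)+s\,y^\rho$, and the same infimum over $y\in(0,1)$. Your preliminary remark that $s<\infty$ forces $\P(X>t)>0$ and your observation that admissibility is unused here are welcome clarifications; the only cosmetic slip is that the relevant property of $\limsup$ is subadditivity, not superadditivity.
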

\begin{proof}
For all $t>0$ and $y>0$, we have
\begin{align}\label{(5.2)}
\begin{split}
\P(X>t) &= \P(AX+B>t) \geq \P(AX+B>t, X > ty) \\
& \geq \P(Aty+B>t, X>ty) = \P(Aty+B>t)\P(X>ty). 
\end{split}
\end{align}
Thus, for $y> 0$, 
\begin{align*}
s & 
\leq \limsup_{t \to\infty\ }\frac{-\log \P(Aty+B>t) -\log \P(X>ty)}{f(t)} \\ 
&\leq \limsup_{t \to\infty\ }\frac{-\log \P(Aty+B>t)}{f(t)} + \limsup_{t \to\infty }\frac{-\log \P(X>ty)}{f(ty)}\frac{f(ty)}{f(t)} =
g(y) + s\,y^{\rho}. 
\end{align*}
Now, using the assumption that $s < \infty$, we obtain $s(1-y^{\rho}) \leq g(y)$. 
Therefore
\begin{align*}
s \leq \inf_{y\in(0,1)}\left\{\frac{g(y)}{1-y^{\rho}}\right\} = \lambda^\ast.
\end{align*}
\end{proof}

\begin{lemma}\label{lem:s<oo}
Under the assumptions of Theorem \ref{theorem4.1}, if additionally $\lambda^\ast  < \infty$, then $\limsup_{t \to\infty}\left(-f(t)^{-1}\log \P(X>t)\right) < \infty$. 
\end{lemma}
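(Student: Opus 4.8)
The plan is to produce, from below, a stochastic lower bound for $X$ of the form $X \geq_{st} Z$ for some nonnegative random variable $Z$ with $\ED_f(\kappa)$-behavior for a finite $\kappa$; this immediately yields $\P(X>t)\geq\P(Z>t)$ and hence $\limsup_{t\to\infty}(-f(t)^{-1}\log\P(X>t))\leq \kappa<\infty$. The natural candidate for $\kappa$ is some value strictly larger than $\lambda^\ast$ (which is finite by hypothesis), for which the Legendre-type transform satisfies $\phi_\rho(\kappa)<\kappa$ — such $\kappa$ exists by Lemma \ref{nowe3.15}(i), since $\phi_\rho(c)\geq c$ forces $c\leq\lambda^\ast$, so any $\kappa>\lambda^\ast$ (and finite) gives $\phi_\rho(\kappa)<\kappa$.

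The construction mirrors Lemma \ref{lem:Zk} but with the inequalities reversed. First I would fix $\kappa>\lambda^\ast$ with $\phi_\rho(\kappa)<\kappa$ and let $Z_0$ be the nonnegative random variable independent of $(A,B)$ with $\P(Z_0>t)=\min\{1,\exp(-\kappa f(t))\}$ (or, to be safe about the behavior near $0$, $\P(Z_0>t)=\exp(-\kappa f(t))$ for $t$ past the point where $\kappa f(t)>0$, and $\P(Z_0>t)=1$ below it); then $Z_0$ is $\ED_f(\kappa)$. By Theorem \ref{nowe3.9}, $AZ_0+B$ is $\ED_f(\phi_\rho(\kappa))$ with $\phi_\rho(\kappa)<\kappa$, so there exists $M>0$ such that $\P(AZ_0+B>t)\geq \P(Z_0>t)$ for all $t\geq M$. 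I then want a random variable $Z$ with $Z\geq_{st}AZ+B$ replaced by $Z\leq_{st}AZ+B$; a clean way is to truncate from above: let $\P(Z\in\cdot)=\P(Z_0\in\cdot\mid Z_0\leq M)$, so that $\P(Z>t)=\P(Z_0>t\mid Z_0\leq M)$ for $t<M$ and $\P(Z>t)=0$ for $t\geq M$. Since $Z$ is bounded, $\P(AZ+B>t)\geq 0 = \P(Z>t)$ for $t\geq M$; for $t<M$, using $\P(Z_0\leq M)^{-1}\geq 1$ one checks $\P(AZ+B>t)\geq\P(AZ_0+B>t)\geq \P(Z_0>t)\geq \P(Z_0>t\mid Z_0\leq M)=\P(Z>t)$, where the middle inequality holds for $t$ in the range where $t\geq M$ fails but the $\ED$-comparison still applies — this near-$M$ bookkeeping is the one place requiring a little care, and it can be handled by choosing $M$ large enough and noting $\P(Z_0>t)$ for $t<M$ is either $1$ (if $t$ below the threshold) or at least $\P(Z_0>M)$, which dominates $\P(Z_0>t\mid Z_0\leq M)\cdot\P(Z_0\leq M)$. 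So $Z\leq_{st}AZ+B$.

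Now set $X_0=Z$ in the recursion $X_n=A_nX_{n-1}+B_n$. Because $Z\leq_{st}AZ+B$, we have $X_1\geq_{st}X_0$, and Corollary \ref{cor:monotone} gives $X\geq_{st}X_0=Z$, i.e. $\P(X>t)\geq\P(Z>t)$ for all $t\in\R$. But this $Z$ is bounded by $M$, which only yields $\limsup(-f(t)^{-1}\log\P(X>t))=\infty$ — useless. The fix is to \emph{not} truncate from above but instead to observe that one can iterate the inequality $\P(AZ_0+B>t)\geq\P(Z_0>t)$ directly: define $Y_0=Z_0$ and $Y_n=A_nY_{n-1}+B_n$; then $Y_1\geq_{st}Y_0$ on $[M,\infty)$ and (since adding $B\geq0$ and the monotone structure preserve the ordering on all of $\R$ once it holds on a tail, because $\P(Y_0>t)\le 1=\P(Y_1>t)$ automatically for $t<M'$ with $M'$ the threshold of $Z_0$) one gets $Y_1\geq_{st}Y_0$ outright, hence $Y_n\uparrow_{st}$ and $X=\lim_{st}Y_n\geq_{st}Y_0=Z_0$. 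Therefore $\P(X>t)\geq\P(Z_0>t)=\exp(-\kappa f(t))$ for large $t$, giving $\limsup_{t\to\infty}(-f(t)^{-1}\log\P(X>t))\leq\kappa<\infty$, as desired.

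\textbf{Main obstacle.} The delicate point is establishing $Y_1\geq_{st}Y_0$ on \emph{all} of $\R$ from the $\ED$-comparison, which only gives it on a tail $[M,\infty)$; this requires exploiting that $\P(Y_0>t)\leq 1$ trivially matches $\P(Y_1>t)=1$ wherever $\P(Y_1>t)=1$, so one must check that the "bad" region $t<M$ is entirely contained in $\{t:\P(A Y_0+B>t)=1\}$ — which holds if $B$ is unbounded, but if $B$ is bounded one should instead first replace $Z_0$ by a scaled-down version (multiply by a small constant, noting $\ED_f$ is preserved and the comparison constant $M$ can be pushed wherever needed) so that the threshold of the candidate sits below $M$. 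Equivalently, and more cleanly, one picks $\kappa>\lambda^\ast$, notes $\phi_\rho(\kappa)<\kappa$, and chooses the candidate's distribution as $\P(Z_0>t)=\exp(-\kappa f(t)+c)$ normalized to be a survival function with $Z_0$ supported on a tail $(M_0,\infty)$ where $M_0\geq M$; then $\P(Z_0>t)\in\{1\}$ for $t\le M_0$ and the comparison on $[M_0,\infty)\subset[M,\infty)$ suffices, while for $t<M_0$ we have $\P(A Z_0+B>t)\ge \P(Z_0>M_0)\cdot[\text{something}]$... — in short, conditioning $Z_0$ on $\{Z_0>M\}$ (exactly as in Lemma \ref{lem:Zk}, but now used to get a \emph{lower} bound) resolves everything, since then $\P(Z>t)=1$ for $t\le M$ is impossible and instead $\P(Z>t)=\P(Z_0>t)/\P(Z_0>M)\ge\P(Z_0>t)$, and the recursion ordering holds on all of $\R$ because $B\ge0$ keeps the $A Z+B$ term from dropping below any level that $Z$ itself, being supported on $(M,\infty)$, already clears. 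This is the version I would write up.
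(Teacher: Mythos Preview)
Your approach has a real gap at the step you yourself flag as the obstacle, and the fix you settle on does not work. If $Z$ is $Z_0$ conditioned on $\{Z_0>M\}$, then $\P(Z>t)=1$ for all $t\le M$, so to get $Z\le_{st}AZ+B$ you would need $AZ+B>M$ almost surely. But nothing in the hypotheses prevents $A$ from being $0$ (or arbitrarily small) with positive probability while $B$ takes small values; in that event $AZ+B$ drops below $M$ regardless of how large $Z$ is. Your closing sentence---that ``$B\ge0$ keeps the $AZ+B$ term from dropping below any level that $Z$ itself \ldots\ already clears''---is simply false: $AZ+B$ is not pointwise $\ge Z$. This is exactly why the conditioning trick of Lemma~\ref{lem:Zk} is asymmetric: conditioning on $\{Z_0>M\}$ pushes $Z$ \emph{up}, which helps when you want $Z\ge_{st}AZ+B$ (the $t<M$ case becomes $1\ge\P(AZ+B>t)$, trivially true), but hurts when you want the reverse inequality.

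There is, however, a one-line rescue of your overall strategy. You do not need to manufacture an auxiliary $Z$: taking $X_0=0$ already gives $X_1=B\ge_{st}X_0$, hence $X\ge_{st}X_1=B$ by Corollary~\ref{cor:monotone}, and since $\lambda^\ast<\infty$ forces $g(0)<\infty$ (Lemma~\ref{nowe3.15}(iv)), you get $\limsup_{t\to\infty}(-f(t)^{-1}\log\P(X>t))\le g(0)<\infty$ immediately.

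The paper's own proof takes a different route: it iterates the inequality $\P(X>t)\ge\P(Aty+B>t)\P(X>ty)$ (valid for any $y\in(0,1)$ with $g(y)<\infty$) along the geometric sequence $t,ty,ty^2,\ldots$ until $ty^n$ drops below a fixed level, then controls the telescoping sum $\sum_k f(ty^k)/f(t)$ via Potter bounds. No stochastic-comparison random variable is constructed at all.
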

\begin{proof}
The assumption
$\lambda^\ast < \infty$ implies that there exists $y\in(0,1)$ such that $g(y)< \infty$.  Fix $\eta>0$. By the definition of $g$, we conclude that 
\begin{align*}
\exists\, M > 0\,\,\, \forall\, t \geq M \quad \frac{-\log \P(Aty+B>t)}{f(t)} \leq g(y)+\eta. 
\end{align*}
Using \eqref{(5.2)}, we obtain for $t>0$ and $k=0,1,\ldots$,
\begin{align*}
-\log \P(X>ty^k) +  \log \P(X>ty^{k+1}) & \leq -\log \P(Aty^{k+1}+B>ty^k) \\ 
& \leq f(ty^k)(g(y)+\eta),
\end{align*}
provided $ty^k \geq M$. 
Summing these inequalities for $k=0,\ldots,n \in\mathbb{N}$, we get
\begin{align}\label{(7.2)}
-\log \P(X>t)+\log \P(X>ty^{n+1}) \leq (g(y)+\eta)\sum_{k=0}^{n}f(ty^k), 
\end{align}
provided $ty^k \geq M$ for $k=0,\ldots,n$. 
Set $n=n_t = \left\lfloor \log(\frac{M}{t}) / \log (y) \right\rfloor$. It is easy to verify that
\begin{align}\label{(8.2)}
ty^{n_t} \geq M \geq ty^{n_t+1}.
\end{align}
Thus, \eqref{(7.2)} implies that 
\begin{align*}
\limsup_{t \to\infty}\left\{\frac{-\log \P(X>t)}{f(t)} + \frac{\log \P(X>ty^{n_t+1})}{f(t)}\right\} \leq (g(y)+\eta)\limsup_{t \to\infty}\sum_{k=0}^{n_t}\frac{f(ty^k)}{f(t)}. 
\end{align*}
Using Potter bounds, Lemma \ref{granica} (iv), for $C=2$ and $\delta =\rho/2$, we conclude that for sufficiently large $t$,
\begin{align*}
\frac{f(ty^k)}{f(t)} \leq 2 y^{k\rho/2}.
\end{align*}
Since $y\in(0,1)$, the series $\sum_{k=0}^{\infty}\frac{f(ty^k)}{f(t)}$ is finite as $t\to\infty$. Finally, we observe that
\begin{multline*}
\limsup_{t \to\infty}\frac{-\log \P(X>t)}{f(t)} + \liminf_{t \to\infty}\frac{\log \P(X>ty^{n_t+1})}{f(t)}
\\ \leq \limsup_{t \to\infty}\left\{ \frac{-\log \P(X>t)}{f(t)} + \frac{\log \P(X>ty^{n_t+1})}{f(t)}\right\} < \infty.
\end{multline*}
By \eqref{(8.2)}, we obtain $\P(X>ty^{n_t+1}) \geq \P(X>M)$.
Since $\lambda^\ast < \infty$, by Lemma \ref{lem:g(1)}, we have $\mathrm{ess}\sup\left( \frac{B}{1-A}\mid A<1\right)=\infty$, which, by Lemma \ref{ess_supX}, implies that $\mathrm{ess}\sup(X) = \infty$.
Therefore $\P(X>M)>0$ and  the second term on the left-hand side above is $0$. This concludes the proof.
\end{proof}

\begin{proof}[Proof of Theorem~\ref{theorem4.1}]
If $\lambda^\ast=\infty$, then $X$ is $\ED_f(\lambda^\ast)$ by Lemma \ref{lemma4.3}.  If $\lambda^\ast<\infty$, then Lemma \ref{lemma4.3} gives the lower bound, which, by Lemma \ref{lem:s<oo}, is the same as the upper bound of Lemma \ref{lemma4.5}. 
\end{proof}

\section{\texorpdfstring{Upper envelope for $(X_n)_n$}{Upper envelope}}\label{sec:UpperEnvelope}

Similarly as in the previous section, we assume that $(A,B)$ are nonzero nonnegative random variables such that $\P(A\in[0,1))=1$ and $\E[\max\{\log B,0\}]<\infty$ and that $X$ is the unique solution $X$ to \eqref{eq:affine}. 
Let $(A_n, B_n)_{n \geq 1}$ be a sequence of independent copies of $(A,B)$. We set  $X_0 = 0$ and consider the sequence $X_n = A_nX_{n-1}+B_n$ for $n\geq1$. 

In view of Lemma \ref{granica} (v), without loss of generality, we assume that $f$ is continuous and strictly increasing so that $f^{-1}$ is well defined.
\begin{thm}\label{nowe6.1}
Let $g$ be an admissible $\LDM_f^\rho$ for $(A,B)$, where $f \in\ \mathcal{R}^{\rho}$ and $\rho > 0$.  Assume that $\lambda^\ast \in (0, \infty)$. Then, almost surely 
\begin{align*}
\limsup_{n\to\infty} \frac{X_n}{f^{-1}(\log n)} = (\lambda^\ast )^{-1/\rho}.
\end{align*}
\end{thm}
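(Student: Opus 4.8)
The plan is to prove the two inequalities $\limsup_{n\to\infty} X_n/f^{-1}(\log n) \leq (\lambda^\ast)^{-1/\rho}$ and $\limsup_{n\to\infty} X_n/f^{-1}(\log n) \geq (\lambda^\ast)^{-1/\rho}$ separately, both almost surely. For the upper bound, I would exploit the fact that $(X_n)_n$ is stochastically increasing (since $X_0=0\leq_{st}X_1$, by the monotonicity lemma) and converges in distribution to $X$, so $X_n\leq_{st}X$ for every $n$; hence the tail estimate from Theorem~\ref{theorem4.1}, namely $-\log\P(X>t)\sim \lambda^\ast f(t)$, gives uniform-in-$n$ tail control: for any $\eps>0$, $\P(X_n > t)\leq \P(X>t)\leq \exp(-(\lambda^\ast-\eps)f(t))$ for all large $t$. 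Taking $t = f^{-1}(\log n / (\lambda^\ast - 2\eps))$ and noting $f^{-1}\in\mathcal{R}^{1/\rho}$, one gets $\P(X_n > (1+o(1))(\lambda^\ast)^{-1/\rho}(1-2\eps/\lambda^\ast)^{-1/\rho} f^{-1}(\log n))$ summable along a geometric subsequence $n_k$; a Borel–Cantelli argument on $n_k$ together with monotonicity of $X_n$ in a suitable sense (or a union bound over the block $[n_k,n_{k+1})$ using that the $f^{-1}(\log n)$ scaling barely changes within a block) upgrades this to the full sequence, yielding the upper bound after letting $\eps\downarrow 0$.

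For the lower bound I would use a second-moment / independence-over-blocks strategy. The key observation is that $X_n = \sum_{j=1}^n B_j \prod_{k=j+1}^n A_k \geq B_n$, and more usefully $X_n \geq \sum_{j=m+1}^{n} B_j\prod_{k=j+1}^n A_k$ which, restricted to a single large increment, can be bounded below by a block-exponential event. Concretely, fix $\eps>0$ and let $t_n = (1-\eps)(\lambda^\ast)^{-1/\rho} f^{-1}(\log n)$; I want to show $X_n > t_n$ infinitely often, a.s. One cannot directly use Borel–Cantelli II because the events $\{X_n>t_n\}$ are dependent, so I would pass to a sparse subsequence $n_k$ growing fast enough that the "fresh" parts of $X_{n_k}$ are (approximately) independent: write $X_{n_{k+1}} \geq \sum_{j=n_k+1}^{n_{k+1}} B_j\prod_{\ell=j+1}^{n_{k+1}}A_\ell =: \tilde X_k$, where the $\tilde X_k$ are independent across $k$ (they depend on disjoint blocks of $(A_j,B_j)$), and each $\tilde X_k$ has the same distribution as $X_{n_{k+1}-n_k}$. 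Choosing $n_{k+1}-n_k\to\infty$ slowly relative to $n_{k+1}$, Theorem~\ref{theorem4.1} forces $\P(\tilde X_k > t_{n_{k+1}}) \geq \exp(-(\lambda^\ast+\eps)f(t_{n_{k+1}})) \geq n_{k+1}^{-(1-\eps')}$ for suitable $\eps'$, which is a divergent series; Borel–Cantelli II on the independent events $\{\tilde X_k > t_{n_{k+1}}\}$ then gives $X_{n_{k+1}} > t_{n_{k+1}}$ infinitely often, a.s.

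There is a subtlety: the probabilities $\P(\tilde X_k > t_{n_{k+1}})$ involve $X_{m}$ with $m = n_{k+1}-n_k$ possibly much smaller than $n_{k+1}$, and we only know the tail of the stationary $X$, not of $X_m$ for finite $m$. To handle this I would choose the block lengths so that $m = n_{k+1}-n_k$ itself tends to infinity; then, by the distributional convergence $X_m\Rightarrow X$ combined with a uniform tail-domination argument — or alternatively by invoking Lemma~\ref{6.10.} with the bounded uniformly continuous test function $f_\delta = \min\{1, \max\{0, (\cdot - t + 1)/1\}\}$-type approximations to indicator of $\{X > t_{n_{k+1}}\}$ — one gets that $\P(X_m > t)$ is eventually bounded below by, say, half of $\P(X > 2t)$, which still decays like $\exp(-O(f(t)))$ and suffices. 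Actually the cleanest route for the lower bound may be to bypass subsequences entirely and use Lemma~\ref{6.10.} directly: for a bump function $\psi$ supported above $(1-\eps)(\lambda^\ast)^{-1/\rho}f^{-1}(c)$ one has $\limsup_m \frac 1m\sum_{k\leq m}\psi(X_{n_k})\geq \E[\psi(X)]>0$ along any increasing $(n_k)$, which already shows $X_{n_k}$ exceeds that level for a positive fraction of $k$ — but converting this into the sharp $\log n$ scaling requires letting the level grow with $k$, so a diagonalization over $\eps$ and a Cauchy-type condensation argument is still needed. \textbf{The main obstacle} I anticipate is precisely this conversion: obtaining the optimal constant $(\lambda^\ast)^{-1/\rho}$ (rather than something off by a constant factor) in the lower bound requires carefully matching the block-length growth to the rate at which $\P(X_m>t_n)$ decays — too-fast blocks kill the Borel–Cantelli divergence, too-slow blocks destroy independence — and controlling the finite-$m$ tail $\P(X_m > t)$ uniformly enough to run the argument. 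I would expect to need a lemma, analogous to those in \cite{BKT22}, giving a quantitative lower bound on $\P(X_m > t)$ valid for $t$ in the relevant range $t\asymp f^{-1}(\log n)$ and $m$ up to a polynomial in $n$.
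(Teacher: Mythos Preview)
Your upper bound is essentially the paper's, but you overcomplicate it: since $X_n\leq_{st}X$ and $-\log\P(X>t)\sim\lambda^\ast f(t)$, one has directly $\P(X_n>t_n)\leq n^{-\gamma}$ with $\gamma>1$ for $t_n=f^{-1}((1+\eps)\log n/\lambda^\ast)$, and $\sum_n n^{-\gamma}<\infty$. No subsequence or block argument is needed; straight Borel--Cantelli on the full sequence suffices.

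For the lower bound your instincts are right but the execution diverges from the paper in an instructive way. The paper does \emph{not} use block independence and Borel--Cantelli II. Instead it uses the conditional (L\'evy) Borel--Cantelli lemma: it suffices that $\sum_n \P(X_{k_{n+1}}>t_{k_{n+1}}\mid \mathcal F_{k_n})=\infty$ a.s. The crucial input is exactly the quantitative finite-time bound you flag as the main obstacle, and the paper obtains it by iterating the one-step inequality $\P(X_n>t\mid X_{n-1})\geq \1_{\{X_{n-1}>ty\}}\P(Aty+B>t)$ down a geometric ladder $t,ty_\ast,ty_\ast^2,\ldots$, yielding (their Lemma~\ref{nowe6.7})
\[
\P(X_{k_{n+1}}>t\mid X_{k_n})\geq \1_{(ty_\ast^{k_{n+1}-k_n},\infty)}(X_{k_n})\,\exp\bigl(-(1+\delta)\lambda^\ast f(t)\bigr),
\]
with $y_\ast\in(0,1)$ a near-minimizer of $g(y)/(1-y^\rho)$. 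This is purely a consequence of the $\LDM$ and regular variation of $f$; distributional convergence $X_m\Rightarrow X$ plays no role here, and your proposed route through that convergence would not yield a lower tail bound at growing levels. The sequence $(k_n)$ is then tuned (Lemma~\ref{noweA.1}) so that $t_{k_{n+1}}y_\ast^{k_{n+1}-k_n}$ stays in a fixed bounded interval $[\tilde M y_\ast,c]$; this forces $k_{n+1}-k_n\asymp\log\log k_{n+1}$ and keeps $k_n^\gamma\leq Kn$.

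The remaining issue --- which your proposal does not anticipate correctly --- is that the conditional bound carries the random factor $\1_{\{X_{k_n}>c\}}$. This is where Lemma~\ref{6.10.} actually enters: not to control tails, but to guarantee that $\1_{\{X_{k_n}>c\}}=1$ for a positive density of $n$, via $\limsup_m m^{-1}\sum_{n\leq m}\1_{\{X_{k_n}>c\}}\geq \P(X>2c)>0$. Kronecker's lemma then converts this into divergence of $\sum_n \1_{\{X_{k_n}>c\}}k_{n+1}^{-\gamma}$. Your block-independence scheme with $\tilde X_k$ started from $0$ could in principle be made to work using the same iterated bound (replacing the indicator by $\P(B>ty_\ast^{m-1})$, a fixed positive constant once $m$ is tuned), but the paper's conditional route avoids having to verify independence of $\{\tilde X_k>t_{n_{k+1}}\}$ and packages the recurrence of $\{X_{k_n}>c\}$ more cleanly.
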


All lemmas in this section implicitly make the same assumptions as those in Theorem \ref{nowe6.1}.

\begin{lemma}\label{nowe6.2}
Almost surely, we have
\begin{align*}
\limsup_{n \to\infty} \frac{X_n}{f^{-1}(\log n)} \leq (\lambda^\ast)^{-1/\rho}.
\end{align*}
\end{lemma}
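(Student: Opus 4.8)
The plan is to establish the upper bound via a Borel--Cantelli argument along a suitably chosen (geometrically spaced) subsequence, using the tail estimate for $X$ from Theorem \ref{theorem4.1} together with the stochastic monotonicity of $(X_n)$ when $X_0=0$. First I would note that since $X_0=0 \leq_{st} X_1$ (as $B\geq 0$ a.s.), Lemma (the monotonicity lemma) gives $X_{n-1}\leq_{st} X_n$ for all $n$, and Corollary \ref{cor:monotone} yields $X_n\leq_{st} X$ for every $n$. Hence $\P(X_n>t)\leq\P(X>t)$ for all $t$ and all $n$. By Theorem \ref{theorem4.1}, $X$ is $\ED_f(\lambda^\ast)$, so for any fixed $\eps>0$ there is $t_0$ such that $\P(X>t)\leq\exp(-(\lambda^\ast-\eps)f(t))$ for $t\geq t_0$.

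Next I would pick the scaling. Fix $\eps\in(0,\lambda^\ast)$ and $\delta>0$, and set $t_n = (1+\delta)(\lambda^\ast-\eps)^{-1/\rho}\,f^{-1}(\log n)$. Using that $f\in\mathcal{R}^\rho$ is continuous and strictly increasing, one computes $f(t_n)\sim (1+\delta)^\rho (\lambda^\ast-\eps)^{-1}\log n$ as $n\to\infty$ (here one invokes $f(cx)\sim c^\rho f(x)$ and $f(f^{-1}(u))=u$), so $(\lambda^\ast-\eps)f(t_n)\geq (1+\delta/2)\log n$ for large $n$, hence $\P(X_n>t_n)\leq n^{-(1+\delta/2)}$ for large $n$. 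However, $\sum_n n^{-(1+\delta/2)}<\infty$ only gives the bound along all of $\N$ if we are careful: the event we must control is $\{\limsup_n X_n/f^{-1}(\log n) > (1+\delta)(\lambda^\ast-\eps)^{-1/\rho}\}$. A direct summation over all $n$ works here because $\sum_n \P(X_n>t_n)<\infty$, so by Borel--Cantelli, a.s. $X_n\leq t_n$ for all large $n$, i.e. $\limsup_n X_n/f^{-1}(\log n)\leq (1+\delta)(\lambda^\ast-\eps)^{-1/\rho}$ almost surely. Letting $\delta\downarrow 0$ and $\eps\downarrow 0$ through countable sequences gives $\limsup_n X_n/f^{-1}(\log n)\leq (\lambda^\ast)^{-1/\rho}$ a.s.

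The step I expect to be the main obstacle is verifying the asymptotic $f(t_n)\sim (1+\delta)^\rho(\lambda^\ast-\eps)^{-1}\log n$ with enough uniformity, and more precisely converting this into the clean inequality $(\lambda^\ast-\eps)f(t_n)\geq(1+\delta/2)\log n$ for all large $n$ — this requires combining regular variation of $f$ (equivalently Potter-type bounds, Lemma \ref{granica}(iii)) with the identity $f(f^{-1}(\log n))=\log n$, and one must check that the slowly varying part does not spoil the comparison. A secondary (but routine) point is that summing $\P(X_n>t_n)$ directly over $n\in\N$ suffices and no thinning to a subsequence is actually needed for the upper bound, precisely because the exponent $1+\delta/2>1$; care is only needed that the estimate $\P(X>t)\leq\exp(-(\lambda^\ast-\eps)f(t))$ holds for all $t$ beyond some threshold, which is guaranteed once $\lambda^\ast>0$ so that $t\mapsto\P(X>t)$ does not vanish identically (Lemma \ref{lem:g(1)} and Lemma \ref{ess_supX} ensure $\mathrm{ess}\sup(X)=\infty$ when $\lambda^\ast<\infty$, so the bound is non-vacuous). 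Finally, I would record that the whole argument only used $\P(X_n>t)\leq\P(X>t)$ and the $\ED_f(\lambda^\ast)$ property, so it is genuinely the ``easy half'' of Theorem \ref{nowe6.1}; the matching lower bound (not treated here) is where the real work lies.
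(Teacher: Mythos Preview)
Your proposal is correct and follows essentially the same Borel--Cantelli argument as the paper: stochastic domination $X_n\leq_{st}X$ via Corollary \ref{cor:monotone}, the $\ED_f(\lambda^\ast)$ tail bound from Theorem \ref{theorem4.1}, and summation over all $n\in\N$. The only cosmetic difference is that the paper places the scaling constant \emph{inside} $f^{-1}$ (setting $t_n=f^{-1}((1+\epsilon)\log n/\lambda^\ast)$ so that $f(t_n)$ is exact and regular variation of $f^{-1}$ is invoked only at the end), whereas you scale outside and must convert the asymptotic $f(c\,f^{-1}(\log n))\sim c^\rho\log n$ into an inequality; one small slip is that $(1+\delta)^\rho>1+\delta/2$ can fail when $\rho<1/2$, but replacing $1+\delta/2$ by any fixed number in $(1,(1+\delta)^\rho)$ repairs this immediately.
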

\begin{proof}
Fix $\epsilon > 0$. There exists $\delta \in (0,1)$ such that $\gamma := (1-\delta)(1+\epsilon)>1$. 
By Theorem \ref{theorem4.1}, $X$ is $\ED_{f}(\lambda^\ast)$. Since $\lambda^\ast<\infty$, there exists $M>0$ such that 
\begin{align*}
\forall\,t \geq M \,\,\, \P(X>t) \leq e^{-\lambda^* f(t)(1-\delta)}
\end{align*}
Define 
\[
t_n = f^{-1}\left(\frac{(1+\epsilon)\log n }{\lambda^\ast}\right),\quad n\in\mathbb{N}.
\]
Since $B_1=X_1 \geq_{st} X_0=0$, Corollary \ref{cor:monotone} implies that for all $n \in\N$, it holds that $X \geq_{st} X_n$.
As $t_n\to\infty$, for sufficiently large $n$, we obtain
\begin{align*}
    \P\left(X_n > t_n\right) \leq \P(X>t_n) \leq 
    e^{-(1+\epsilon)(1-\delta)\log n} = n^{-\gamma}.
\end{align*}
Since $\gamma > 1$, we conclude that 
\begin{align*}
    \sum_{n=1}^{\infty} \P\left(X_n > t_n\right) < \infty.
\end{align*}
By the Borel-Cantelli Lemma, we get that almost surely,
    \begin{align*}
\limsup_{n \to\infty} \frac{X_n}{f^{-1}\left(\frac{(1+\epsilon)\log n}{\lambda^\ast }\right)} \leq 1.
\end{align*}
By Lemma \ref{granica} (iv), $f^{-1}$ is regularly varying with index $\frac{1}{\rho}$, and therefore,
\begin{align*}
f^{-1}\left(\frac{(1+\epsilon)\log n }{\lambda^\ast }\right) \sim \left(\frac{1+\epsilon}{\lambda^\ast }\right)^{\frac{1}{\rho}} f^{-1}(\log n).
\end{align*}
Hence, almost surely,
\begin{align*}
    \limsup_{n \to\infty} \frac{X_n}{ f^{-1}(\log n)} \leq \left(\frac{1+\epsilon}{\lambda^\ast }\right)^{\frac{1}{\rho}}.
\end{align*}
By letting $\epsilon \to 0^+$, we obtain the assertion.
\end{proof}

In the following lemma, we assume that $X_0$ is arbitrary but independent of $(A_n,B_n)_{n}$. 
\begin{lemma}\label{nowe6.7}
For any $\delta >0$, there exist $y_\ast \in (0,1)$ and $\tilde{M} > 0$ such that if $ty_\ast^{n-1} \geq \tilde{M}$, then, a.s.,
\begin{align*}
  P(X_n>t \mid X_0) \geq \1_{(ty_*^{n}, \infty)}(X_0) \exp(-(1+\delta)\lambda^\ast f(t)).
\end{align*}

\end{lemma}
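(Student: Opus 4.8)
The plan is to iterate the one-step inequality
\[
\P(X_n > t \mid X_{n-1}) \geq \1_{(ty,\infty)}(X_{n-1})\,\P(A\,ty + B > t)
\]
which follows from $X_n = A_n X_{n-1} + B_n$ with $(A_n,B_n)$ independent of $X_{n-1}$, since on the event $\{X_{n-1} > ty\}$ we have $\{A_n ty + B_n > t\} \subset \{X_n > t\}$. Unwinding this recursion $n$ times with a geometrically shrinking sequence of thresholds $t, ty, ty^2, \dots, ty^n$ gives
\[
\P(X_n > t \mid X_0) \geq \1_{(ty^n,\infty)}(X_0)\prod_{k=0}^{n-1}\P\!\left(A\,ty^{k+1} + B > ty^k\right)
= \1_{(ty^n,\infty)}(X_0)\prod_{k=0}^{n-1}\P\!\left(A\,ty^k y + B > ty^k\right),
\]
and then to bound each factor from below using the local dependence measure: for any $\eta>0$ there is $\tilde M$ such that $-\log\P(A s y + B > s) \leq (g(y)+\eta)f(s)$ for all $s \geq \tilde M$. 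Applying this with $s = ty^k$ (valid as long as $ty^{n-1}\geq \tilde M$, which forces $ty^k \geq \tilde M$ for all $k \leq n-1$) yields
\[
-\log\P(X_n > t\mid X_0) \leq (g(y)+\eta)\sum_{k=0}^{n-1} f(ty^k)
\]
on the event $\{X_0 > ty^n\}$.

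The next step is to control the sum $\sum_{k=0}^{n-1} f(ty^k)$. By the Potter bounds (Lemma \ref{granica} (iii)), for any $\delta'>0$ and $t$ large we have $f(ty^k)/f(t)\leq 2 y^{k(\rho-\delta')}$, so $\sum_{k=0}^{n-1} f(ty^k) \leq f(t)\cdot \tfrac{2}{1-y^{\rho-\delta'}}$. Hence
\[
-\log\P(X_n > t \mid X_0) \leq (g(y)+\eta)\,\frac{2}{1-y^{\rho-\delta'}}\,f(t)
\]
on $\{X_0 > ty^n\}$. It remains to choose $y$ close to $1$, and $\eta,\delta'$ small, so that the prefactor is at most $(1+\delta)\lambda^\ast$. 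Here I use the definition $\lambda^\ast = \inf_{y\in(0,1)}\{g(y)/(1-y^\rho)\}$: pick $y_\ast\in(0,1)$ with $g(y_\ast)/(1-y_\ast^\rho)$ within a small factor of $\lambda^\ast$ (note $g(y_\ast)<\infty$ since $\lambda^\ast<\infty$, by Lemma \ref{lem:g(1)} and monotonicity of $g$), and then since $(1-y_\ast^{\rho-\delta'})\to (1-y_\ast^\rho)$ as $\delta'\to 0$ one can absorb the factor $2/(1-y_\ast^{\rho-\delta'})$... wait, the constant $2$ does not disappear. The cleaner route is to replace the crude Potter bound $f(ty^k)/f(t)\le 2y^{k(\rho-\delta')}$ by the asymptotic statement: since $f\in\mathcal R^\rho$, for each fixed $k$, $f(ty^k)/f(t)\to y^{k\rho}$, and more precisely for any $\theta>0$ there is $T$ such that $f(ty^k)/f(t)\le (1+\theta)y^{k(\rho-\delta')}$ for all $k\ge 0$ and $t\ge T$ (uniform Potter bound with constant $1+\theta$). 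Then $\sum_{k\ge 0} f(ty^k)/f(t)\le (1+\theta)/(1-y^{\rho-\delta'})$, and as $\theta,\delta'\downarrow 0$ this tends to $1/(1-y_\ast^\rho)$. Combining, the prefactor becomes $(g(y_\ast)+\eta)(1+\theta)/(1-y_\ast^{\rho-\delta'})$, which can be made $\le (1+\delta)\lambda^\ast$ by first choosing $y_\ast$ so that $g(y_\ast)/(1-y_\ast^\rho)\le (1+\delta/2)\lambda^\ast$ and then $\eta,\theta,\delta'$ small; enlarging $\tilde M$ to also exceed $T$ finishes the argument, with the final $\1_{(ty_\ast^n,\infty)}(X_0)$ coming from the recursion.

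The main obstacle is the bookkeeping in the last step: one must be careful that the threshold constraint propagates correctly ($ty_\ast^{n-1}\ge\tilde M$ guarantees $ty_\ast^k\ge\tilde M$ for all $k=0,\dots,n-1$ because $y_\ast<1$, so every application of the $\LDM$ estimate is licit), and that the Potter-type bound on the sum is genuinely uniform in both $k$ and $n$ with a constant that can be pushed to $1$. Everything else is a routine unrolling of the recursion together with the defining infimum for $\lambda^\ast$.
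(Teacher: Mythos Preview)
Your proposal is correct and follows essentially the same route as the paper's proof: iterate the one-step bound to get the product $\prod_{k=0}^{n-1}\P(A\,ty_\ast^k y_\ast + B > ty_\ast^k)$ (this is the paper's Lemma \ref{nowe6.3}), apply the definition of the $\LDM$ to each factor, control $\sum_{k=0}^{n-1} f(ty_\ast^k)$ by Potter bounds with constant $1+\theta$ (the paper uses $C=1+\alpha$), sum the geometric series, and finally choose $y_\ast$ near the infimum defining $\lambda^\ast$. The paper packages all the small parameters multiplicatively as $(1+\alpha)^4$ rather than mixing additive and multiplicative errors, but the content is identical; your self-correction replacing the constant $2$ by $1+\theta$ is exactly the point, and your observation that the condition $ty_\ast^{n-1}\ge\tilde M$ (with $\tilde M$ large enough for both the $\LDM$ estimate and the Potter bound) propagates to all $k\le n-1$ is precisely what makes the argument uniform.
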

Before proving the above result, we first present a simple lemma. 

\begin{lemma}\label{nowe6.3}
For all $n \geq 1$, $y>0$ and $t>0$ we have, a.s.,
\begin{align*}
\P(X_n>t \mid X_0) \geq \1_{(y^n t, \infty)}(X_0) \prod_{k=0}^{n-1}\P(t\,y^kAy+B>t\,y^k).
\end{align*}
\end{lemma}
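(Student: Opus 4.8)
The plan is to unroll the recursion $X_n = A_n X_{n-1} + B_n$ step by step, at each stage restricting to the event that the previous coordinate lies above the appropriate threshold $t y^{k}$, and using nonnegativity of the $A_k$ together with independence to peel off one factor of the product at a time. Concretely, I would argue by induction on $n$. For $n=1$ we have $X_1 = A_1 X_0 + B_1$, and on the event $\{X_0 > y t\}$, since $A_1 \geq 0$ a.s.\ and $A_1$ is independent of $X_0$, conditioning on $X_0$ gives
\[
\P(X_1 > t \mid X_0) \geq \1_{(yt,\infty)}(X_0)\,\P(A_1 \cdot (yt) + B_1 > t) = \1_{(yt,\infty)}(X_0)\,\P(t y^0 A y + B > t y^0),
\]
which is the claim for $n=1$ (writing $(A,B)$ for the generic copy $(A_1,B_1)$).

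For the inductive step, suppose the statement holds for $n-1$ with the sequence $(A_2,B_2),\dots,(A_n,B_n)$ in place of $(A_1,B_1),\dots,(A_{n-1},B_{n-1})$ and with $X_1$ in place of $X_0$; that is, a.s.,
\[
\P(X_n > t \mid X_1) \geq \1_{(y^{n-1}t,\infty)}(X_1)\prod_{k=1}^{n-1}\P(t y^{k-1} A y + B > t y^{k-1}),
\]
where I have shifted the product index to account for starting from $X_1$ (the product still has $n-1$ identically-distributed factors). Now I would condition first on $X_0$: on $\{X_0 > y^n t\}$, since $A_1 \geq 0$ and $(A_1,B_1)$ is independent of $(A_k,B_k)_{k\ge2}$ and of $X_0$, we get $X_1 = A_1 X_0 + B_1 \geq A_1 y^n t + B_1$, so
\[
\P(X_1 > y^{n-1} t \mid X_0) \geq \1_{(y^n t,\infty)}(X_0)\,\P(A_1 y^n t + B_1 > y^{n-1} t) = \1_{(y^n t,\infty)}(X_0)\,\P(t y^{n-1} A y + B > t y^{n-1}).
\]
Combining this with the tower property $\P(X_n>t\mid X_0) = \E[\,\P(X_n>t\mid X_0,X_1)\mid X_0\,]$, the Markov structure (so that conditionally on $X_1$ the future is independent of $X_0$), and the induction hypothesis applied to $X_1$, yields the full product over $k=0,\dots,n-1$ after reindexing; the indicator $\1_{(y^n t,\infty)}(X_0)$ survives because it is exactly the event needed to launch the chain.

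The only genuinely delicate point is bookkeeping with the conditional expectations: one must be careful that conditioning on $X_1$ and then averaging over $X_1$ given $X_0$ is legitimate, i.e.\ that $(A_k,B_k)_{k\ge 2}$ is independent of $(X_0, A_1, B_1)$ so that the conditional law of $X_n$ given $(X_0,X_1)$ depends only on $X_1$, and that the indicator produced at the first step and the indicator produced by the inductive hypothesis are compatible (the latter asks $X_1 > y^{n-1}t$, which is precisely the event lower-bounded in the first step on $\{X_0 > y^n t\}$). Everything else is a routine application of monotonicity of probability, nonnegativity of $A$, and Fubini/independence; no hard analysis is involved. Alternatively, and perhaps more cleanly, one can avoid the induction entirely by writing $X_n = B_n + A_n B_{n-1} + \dots + (A_n\cdots A_2) B_1 + (A_n \cdots A_1) X_0 \geq (A_n\cdots A_1)X_0$ on the relevant event and then, peeling the innermost term, bounding $X_n \geq A_n(\cdots(A_2(A_1 X_0 + B_1)+B_2)\cdots)+B_n$ and intersecting with the nested events $\{X_0 > y^n t\} \subseteq \{A_1 X_0 + B_1 > y^{n-1}t\}\cap\cdots$, each of which has the stated probability by independence; but the inductive formulation above is the most transparent to write out.
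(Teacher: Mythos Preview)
Your proof is correct and uses essentially the same idea as the paper: peel off one step of the recursion at a time using nonnegativity of $A$ and independence of $(A_k,B_k)$ from the past. The only cosmetic difference is the direction of the iteration: the paper peels from the top, bounding $\P(X_n>t\mid X_0)\ge \P(tAy+B>t)\,\P(X_{n-1}>ty\mid X_0)$ and then iterating with $t$ replaced by $ty$, whereas your induction peels from the bottom, first handling $X_0\to X_1$ and then invoking the hypothesis on the shifted chain; both unwind to the same product.
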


\begin{proof}
Notice that 
\begin{align*}
    \P(X_n>t \mid X_0)
    & \geq \P(A_nX_{n-1}+B_n>t, X_{n-1} > t y \mid X_0) \\
    & \geq \P(t\, A_n y + B_n > t, X_{n-1} > t y \mid X_0) \\
    &=     \P(t\,Ay+B>t)\P(X_{n-1}>t y \mid X_0).
\end{align*}
We obtain the assertion by iterating the above inequality. 
\end{proof}

Now we are ready to present the proof of Lemma \ref{nowe6.7}.
\begin{proof}[Proof of Lemma \ref{nowe6.7}]
    Fix $\alpha>0$ and let $y_* \in (0,1)$ be such that
    \begin{align}\label{8.9.1}
        \lambda^\ast  \leq \frac{g(y_*)}{1-y_*^\rho} \leq \lambda^\ast (1+\alpha).
    \end{align}
    Such $y_*$ exists as a consequence of the definition of $\lambda^\ast = \inf_{y\in(0,1)}\left\{g(y)/(1-y^{\rho})\right\}$ and the fact that $\lambda^\ast>0$. Clearly $g(y^\ast)>0$. 
    By Lemma \ref{nowe6.3}, we have, a.s., 
    \begin{align*}
        \P(X_n>t \mid X_0) \geq \1_{(y_*^n t, \infty)}(X_0) \prod_{k=0}^{n-1}\P\left(ty_*^kAy+B>ty_*^k\right). 
    \end{align*}
    By the definition of $g$, there exists $M>0$ such that 
    \begin{align*}
    \forall \,t \geq M \,\,\, \P(Aty_* + B>t) \geq \exp(-(1+\alpha)g(y_*)f(t)).
    \end{align*}
    Thus, if $t y_\ast^k\geq M$ for $k=0,\ldots,n-1$,  (which is equivalent to $ty_*^{n-1} \geq M$), we obtain 
    \begin{align}\label{8.9.5}
    \P(X_n>t \mid X_0) \geq \1_{(y_*^n t, \infty)}(X_0) \exp\left(-(1+\alpha)g(y_*)\sum_{k=0}^{n-1}f(ty_*^k)\right).
    \end{align}
    Fix $\eta$ in $(0, \rho)$. 
    By Lemma \ref{granica} (iii), there exists $M_1 > 0$ such that for all $z \in (0,1]$ and for all $t>0$ such that $tz \geq M_1$, it holds that 
    \begin{align*}
        \frac{f(tz)}{f(t)} \leq (1+\alpha)z^{\rho - \eta}
    \end{align*}
    Assume that $ty_{\ast}^{n-1} \geq M_1$. Then for $k = 0, ..., n-1$, we have $ty_{\ast}^k \geq ty_{\ast}^{n-1} \geq M_1$. Therefore, 
    \begin{align}\label{eq:M1}
         f(ty_{\ast}^k) \leq (1+\alpha)y_{\ast}^{k(\rho - \eta)} f(t),\qquad k = 0, ..., n-1.
\end{align}
    Notice that for all $n \in \N$ and sufficiently small $\eta$, we have
    \begin{align}\label{eq:eta}
       \frac{1-y_*^{n(\rho - \eta)}}{1-y_*^{\rho - \eta}} \leq \frac{1}{1-y_*^{\rho - \eta}} \leq \frac{1+\alpha}{1-y_\ast^\rho},
    \end{align}
    where the first inequality above follows from the fact that $y_\ast\in(0,1)$ and the latter is true for sufficiently small $\eta$. 
    Take $\eta\in(0,\rho)$ satisfying the condition above.
    
    Define $\tilde{M} = \max \left\{M_1,M\right\}$ and 
    assume $ty_*^{n-1} \geq \tilde{M}$. 
    Then, by \eqref{8.9.5}, \eqref{eq:M1}, \eqref{eq:eta} and \eqref{8.9.1}, we obtain 
    \begin{align*}
        \P(X_n>t \mid X_0) &\stackrel{\eqref{eq:M1}}{\geq} \1_{(y_*^n t, \infty)}(X_0) \exp\left(-(1+\alpha)^2 g(y_*)\sum_{k=0}^{n-1}y_*^{k(\rho - \eta)}f(t)\right) \\
        &= 
        \1_{(y_*^n t, \infty)}(X_0) \exp\left(-(1+\alpha)^2g(y_*) \frac{1-y_*^{n(\rho - \eta)}}{1-y_*^{\rho - \eta}}f(t)\right) \\ 
        & \stackrel{\eqref{eq:eta}}{\geq}
        \1_{(y_*^n t, \infty)}(X_0) \exp\left(-(1+\alpha)^3g(y_*)\frac{1}{1-y_*^\rho}f(t)\right) \\
        & \stackrel{\eqref{8.9.1}}{\geq} \1_{(y_*^n t, \infty)}(X_0) \exp\left(-(1+\alpha)^4\lambda^\ast f(t)\right).
    \end{align*}
    Since, for all $\delta > 0$, it holds that $1+\delta$ is of the form $(1+\alpha)^4$ for some $\alpha > 0$, this concludes the proof.
    \end{proof}

\begin{lemma}\label{noweA.1}
Let $\tilde{M}, \lambda^\ast , \epsilon > 0$ and $y_* \in (0,1)$. Suppose that $f$ is regularly varying with index $\rho > 0$. There exists a strictly increasing sequence $(k_n)_{n=1}^\infty$ of positive integers and a positive constant $c>\tilde{M}y_*$ such that, for $n \geq 1$,
\begin{align}\label{eq:kn}
c \geq f^{-1}\left(\frac{\log k_{n+1}}{\lambda^\ast (1+\epsilon)}\right)y_\ast^{k_{n+1}-k_n} &> \tilde{M} y_\ast.
\end{align}
Moreover, for any $\gamma \in (0,1)$, there exists $K>0$ such that for all $n \geq 1$,
\begin{align}\label{eq:knbound}
    k_n^\gamma \leq Kn.
\end{align}
\end{lemma}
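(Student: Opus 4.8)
The plan is to construct $(k_n)$ greedily. Write $\psi(x):=f^{-1}\!\big(x/(\lambda^\ast(1+\epsilon))\big)$, so that $\psi\in\mathcal R^{1/\rho}$ is continuous, strictly increasing and $\psi(x)\to\infty$; for $k_n$ given and for integers $m>k_n$ set
\[
P_n(m):=\psi(\log m)\,y_\ast^{\,m-k_n},
\]
and take $k_{n+1}$ to be the \emph{largest} integer $m$ with $P_n(m)>\tilde M y_\ast$. The key observation is that $P_n(m+1)/P_n(m)=\big(\psi(\log(m+1))/\psi(\log m)\big)\,y_\ast$, and since $\log(m+1)/\log m\to1$, the Potter bounds (Lemma~\ref{granica}(iii), applied with a constant $C\in(1,1/y_\ast)$) yield $N_0\in\N$ and $\theta\in(0,1)$ with
\[
\theta\;\le\;\frac{\psi(\log(m+1))}{\psi(\log m)}\,y_\ast\;<\;1\qquad\text{for all integers }m\ge N_0 .
\]
Thus, once $k_n\ge N_0$, the map $m\mapsto P_n(m)$ on $\{k_n+1,k_n+2,\dots\}$ is strictly decreasing with consecutive ratios in $[\theta,1)$, and $P_n(m)\to0$ because $y_\ast^{\,m-k_n}$ decays geometrically while $\psi(\log m)$ grows only poly-logarithmically.

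Set $c:=\tilde M y_\ast/\theta$, so $c>\tilde M y_\ast$, and pick $k_1\ge N_0$ large enough that $\psi(\log(k_1+1))>\tilde M$; then define $k_{n+1}:=\max\{m>k_n:\ P_n(m)>\tilde M y_\ast\}$. One checks inductively that $\psi(\log(k_n+1))>\tilde M$: from $P_{n-1}(k_n)>\tilde M y_\ast$ and $k_n-k_{n-1}\ge1$ one gets $\psi(\log k_n)>\tilde M y_\ast/y_\ast^{\,k_n-k_{n-1}}\ge\tilde M$, and strict monotonicity of $\psi$ then gives $\psi(\log(k_n+1))>\tilde M$. Hence $P_n(k_n+1)=\psi(\log(k_n+1))\,y_\ast>\tilde M y_\ast$, so the set defining $k_{n+1}$ is nonempty, and it is finite because $P_n(m)\to0$; by construction $P_n(k_{n+1})>\tilde M y_\ast$, while $P_n(k_{n+1})\le\theta^{-1}P_n(k_{n+1}+1)\le\theta^{-1}\tilde M y_\ast=c$. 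This is precisely \eqref{eq:kn}, and $k_{n+1}>k_n\ge N_0$ preserves the induction hypothesis, so $(k_n)$ is well defined, strictly increasing, and $c>\tilde M y_\ast$ as required.

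For \eqref{eq:knbound} I first convert \eqref{eq:kn} into a bound on the increments: its left inequality reads $\psi(\log k_{n+1})\,y_\ast^{\,k_{n+1}-k_n-1}>\tilde M$, whence $k_{n+1}-k_n\le 1+\big(\log\psi(\log k_{n+1})-\log\tilde M\big)/\log(1/y_\ast)$; since $\psi\in\mathcal R^{1/\rho}$ gives $\psi(x)\le x^{1/\rho+1}$ for large $x$, there is a constant $C_1$ with $k_{n+1}-k_n\le C_1\log\log k_{n+1}$ for all large $k_{n+1}$, and the finitely many remaining increments are bounded. Summing and using $k_{j+1}\le k_n$ for $j\le n-1$ gives $k_n\le Bn\log\log k_n$ for all large $n$ and some constant $B>0$. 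Now fix $\gamma\in(0,1)$ and choose $\delta\in(0,1-\gamma)$; since $\log\log t=o(t^{\delta})$ we have $\log\log k_n\le k_n^{\delta}$ for large $n$, so $k_n^{1-\delta}\le Bn$, i.e.\ $k_n\le(Bn)^{1/(1-\delta)}$, for large $n$. Then $k_n^{\gamma}\le(Bn)^{\gamma/(1-\delta)}$ with $\gamma/(1-\delta)<1$, so $k_n^{\gamma}/n\to0$; taking $K:=\sup_{n\ge1}k_n^{\gamma}/n<\infty$ gives $k_n^{\gamma}\le Kn$ for all $n\ge1$.

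The main obstacle is the calibration in the first part: one must choose $c$ in the correct relation to the uniform lower bound $\theta$ for the per-step ratio of $P_n$, so that the strictly decreasing sequence $\big(P_n(m)\big)_{m>k_n}$ cannot step over the window $(\tilde M y_\ast,c]$, and one must carry through the construction the fact that this window remains reachable at every stage — i.e.\ that $\psi(\log(k_n+1))>\tilde M$ for every $n$ — which is exactly what forces $k_1$ to be taken large and then, via strict monotonicity of $\psi$, propagates along the induction. The bootstrap in the second part is routine, though it is worth noting that a fixed polynomial bound $k_n=O(n^{p})$ with $p>1$ would not suffice for all $\gamma\in(0,1)$, so one genuinely needs that the increments grow only like $\log\log k_n$.
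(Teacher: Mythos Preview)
Your proof is correct. The paper does not give a self-contained argument here: it rewrites \eqref{eq:kn} via the substitution $H(t)=f(1/t)$ (so $f^{-1}(t)=1/H^{-1}(t)$, turning regular variation at $\infty$ into regular variation at $0$) and then cites \cite[Lemma~A.1]{BKT22} for the existence of the required sequence. Your approach instead constructs $(k_n)$ directly by a greedy rule, choosing $k_{n+1}$ as the last integer at which the monotone sequence $m\mapsto\psi(\log m)\,y_\ast^{m-k_n}$ still exceeds $\tilde M y_\ast$; the Potter bounds then ensure each step shrinks this quantity by a factor in $[\theta,1)$, so it cannot overshoot the window $(\tilde M y_\ast,\tilde M y_\ast/\theta]$. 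This yields \eqref{eq:kn} with $c=\tilde M y_\ast/\theta$, and your bootstrap $k_{n+1}-k_n=O(\log\log k_{n+1})\Rightarrow k_n=O(n\log\log k_n)\Rightarrow k_n^{1-\delta}=O(n)$ cleanly gives \eqref{eq:knbound} for every $\gamma<1$. The paper's route is shorter on the page but depends on an external lemma; your argument is self-contained and makes transparent why the increments are $O(\log\log k_n)$, which is exactly the growth rate needed so that \eqref{eq:knbound} holds for \emph{all} $\gamma\in(0,1)$ simultaneously. One small presentational point: the lower bound $\theta$ really comes from applying the Potter inequality in the reverse direction (or, equivalently, from $\psi(\log(m+1))/\psi(\log m)\to1$), not from the same instance with $C\in(1,1/y_\ast)$; this does not affect the validity of the argument.
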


\begin{proof}
Denote $H(t) = f(1/t)$. By definition, $f$ is regularly varying (at $\infty$) with index $\rho>0$ if and only if $H$ is regularly varying at $0$ with index $-\rho$, \cite[page 8]{BGT89}.
Since $f^{-1}(t)=1/H^{-1}(t)$, \eqref{eq:kn} can be rewritten as 
\begin{align*}
H^{-1}\left(\frac{\log k_{n+1}}{\lambda^\ast (1+\epsilon)}\right)\left(\frac{1}{y_\ast}\right)^{k_{n+1}-k_n-1} < \frac{1}{\tilde{M}}
\quad\mbox{and}\quad 
H^{-1}\left(\frac{\log k_{n+1}}{\lambda^\ast (1+\epsilon)}\right)\left(\frac{1}{y_*}\right)^{k_{n+1}-k_n} \geq \frac{1}{c},
\end{align*}
where $c^{-1}<(\tilde{M}y_*)^{-1}$. The existence of a sequence $(k_n)_{n=1}^\infty$ satisfying the two above conditions and \eqref{eq:knbound} was established in \cite[Lemma A.1]{BKT22}.  
\end{proof}

We will use the following version of the Borel–Cantelli lemma, which can be found in \cite[Theorem 5.1.2]{BC12}.
\begin{lemma}\label{lem:borel} 
    Suppose that $(\mathcal{F}_n)_{n\geq 0}$ is a filtration such that $\mathcal{F}_0=\{\emptyset,\Omega\}$. Assume that $A_n\in\mathcal{F}_n$ for all $n\geq 0$. Then,
    \[
    \limsup_{n\to\infty}A_n = \left\{ \sum_{n=1}^\infty \P(A_n\mid \mathcal{F}_{n-1}) =\infty\right\}. 
    \]
\end{lemma}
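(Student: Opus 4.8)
The plan is to obtain this conditional (Lévy-type) Borel--Cantelli lemma from a martingale convergence argument. Write $Y_n=\P(A_n\mid\mathcal F_{n-1})$ for $n\ge 1$, and set $S_n=\sum_{k=1}^n\1_{A_k}$, $T_n=\sum_{k=1}^n Y_k$, and $M_n=S_n-T_n$. Since $\mathcal F_0$ is trivial we have $M_0=0$, and $(M_n)_{n\ge 0}$ is an $(\mathcal F_n)$-martingale with increments $|\Delta M_k|=|\1_{A_k}-Y_k|\le 1$ and predictable quadratic variation $\langle M\rangle_n=\sum_{k=1}^n\E[(\Delta M_k)^2\mid\mathcal F_{k-1}]=\sum_{k=1}^n(Y_k-Y_k^2)\le T_n$. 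Because $\limsup_{n\to\infty}A_n=\{S_\infty=\infty\}$ by definition, the assertion reduces to the a.s.\ identity $\{S_\infty=\infty\}=\{T_\infty=\infty\}$.

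First I would treat the inclusion $\{T_\infty<\infty\}\subseteq\{S_\infty<\infty\}$. On $\{T_\infty<\infty\}$ one has $\langle M\rangle_\infty<\infty$, so by the standard convergence theorem for martingales with a.s.\ finite predictable quadratic variation, $M_n$ converges to a finite limit a.s.\ on this event; hence $S_n=M_n+T_n$ converges there, giving $S_\infty<\infty$. For the reverse inclusion I would work on $\{T_\infty=\infty\}$, where $T_n\uparrow\infty$. There consider the martingale transform $N_n=\sum_{k=1}^n\Delta M_k/T_k$ (note $T_k$ is $\mathcal F_{k-1}$-measurable); its predictable quadratic variation is $\sum_{k=1}^n(Y_k-Y_k^2)/T_k^2\le\sum_{k\ge 1}(T_k-T_{k-1})/T_k^2$, which is finite on $\{T_\infty=\infty\}$ by comparison with $\int^\infty x^{-2}\,dx$. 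Thus $N_n$ converges a.s.\ on $\{T_\infty=\infty\}$, and Kronecker's lemma applied with the nondecreasing normalizer $T_n\uparrow\infty$ yields $M_n/T_n\to 0$; consequently $S_n/T_n=1+M_n/T_n\to 1$ and, since $T_n\to\infty$, also $S_n\to\infty$. Combining the two inclusions gives $\{S_\infty=\infty\}=\{T_\infty=\infty\}$ up to a null set, which is the claim.

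The only delicate points are bookkeeping: one must check that $Y_n$ is $\mathcal F_{n-1}$-measurable (so that $(M_n)$ is genuinely adapted, $(T_n)$ predictable, and the transform in the second step legitimate), and that the elementary estimates — summability of $\sum(T_k-T_{k-1})/T_k^2$ on $\{T_\infty=\infty\}$ and the invocation of Kronecker's lemma — are carried out on the correct events. I do not expect a substantive obstacle here. As an alternative to the second inclusion, one can avoid Kronecker's lemma by using the dichotomy that a martingale with bounded increments a.s.\ either converges to a finite limit or has $\limsup_n M_n=+\infty$, since in either case $S_n=M_n+T_n\to\infty$ on $\{T_\infty=\infty\}$.
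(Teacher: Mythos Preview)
The paper does not give its own proof of this lemma; it simply cites it as \cite[Theorem 5.1.2]{BC12}. Your martingale argument is the standard proof of the conditional (L\'evy) Borel--Cantelli lemma and is essentially correct: the decomposition $S_n=M_n+T_n$ with $(M_n)$ a bounded-increment martingale, the use of $\langle M\rangle_\infty\le T_\infty$ to get convergence of $M_n$ on $\{T_\infty<\infty\}$, and the Kronecker-lemma step (or the bounded-increment dichotomy) on $\{T_\infty=\infty\}$ are all sound.

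One small technical point worth cleaning up: your transform $N_n=\sum_{k=1}^n\Delta M_k/T_k$ is undefined when $T_k=0$ for some initial indices, and the telescoping bound $\sum_k(T_k-T_{k-1})/T_k^2\le 1/T_{k_0}$ needs a positive starting value. The usual remedy is to replace $T_k$ by $1+T_k$ throughout (or to begin the sum at the first index with $T_k>0$, which exists on $\{T_\infty=\infty\}$); after this cosmetic change the predictable-quadratic-variation estimate and Kronecker's lemma go through exactly as you describe.
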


\begin{lemma}[Kronecker's Lemma]\label{lem:kron}
Assume that $a_n \uparrow \infty$. If $\sum_{n=1}^\infty x_n/a_n$ converges, then $\lim_{n \to\infty} a_n^{-1}\sum_{m=1}^{n}x_m =0$. \\Equivalently, if $\lim_{n \to\infty} a_n^{-1}\sum_{m=1}^{n}x_m \neq 0$, then  $\sum_{n=1}^\infty x_n/a_n$ does not converge.
\end{lemma}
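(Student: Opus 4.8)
The plan is to deduce the statement from Abel's summation-by-parts identity combined with a Toeplitz-type averaging limit; since both the hypothesis and the conclusion involve only the numbers $x_n$ weighted by the $a_n$, summation by parts is the natural bridge between them.

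First I would set $b_0=0$ and $b_n=\sum_{k=1}^n x_k/a_k$ for $n\ge 1$, so that by hypothesis $b_n\to b$ for some finite $b$, and $x_k=a_k(b_k-b_{k-1})$. Summation by parts then gives
\[
\sum_{m=1}^n x_m=\sum_{m=1}^n a_m(b_m-b_{m-1})=a_n b_n-\sum_{m=1}^{n-1}(a_{m+1}-a_m)b_m,
\]
the boundary term at $m=0$ vanishing because $b_0=0$. Dividing by $a_n>0$,
\[
\frac{1}{a_n}\sum_{m=1}^n x_m=b_n-\frac{1}{a_n}\sum_{m=1}^{n-1}(a_{m+1}-a_m)b_m.
\]

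The one substantive point is to show that the second term on the right tends to $b$ as $n\to\infty$. Here I would observe that the weights $w_{n,m}:=(a_{m+1}-a_m)/a_n$, $1\le m\le n-1$, are nonnegative (because $(a_n)$ is nondecreasing), satisfy $\sum_{m=1}^{n-1}w_{n,m}=(a_n-a_1)/a_n\to 1$, and obey $w_{n,m}\to 0$ as $n\to\infty$ for each fixed $m$ (since $a_n\uparrow\infty$); a weighted average with these properties, applied to the convergent sequence $(b_m)$, converges to its limit $b$. Concretely: given $\eps>0$, pick $N$ with $|b_m-b|<\eps$ for $m\ge N$ and split $\sum_m w_{n,m}(b_m-b)$ at $m=N$, so that the head is a fixed finite quantity divided by $a_n\to\infty$ and the tail is bounded by $\eps\sum_m w_{n,m}\le\eps$, while $b\bigl(\sum_m w_{n,m}-1\bigr)\to 0$. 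Combining, $a_n^{-1}\sum_{m=1}^n x_m\to b-b=0$, which is the assertion; the second formulation in the lemma is just its contrapositive.

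I do not anticipate a real obstacle: the argument is entirely elementary, and the only step requiring a line or two of care is the Toeplitz averaging limit above, which one could instead invoke directly from the Silverman--Toeplitz theorem.
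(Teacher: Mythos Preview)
Your proof is correct and is the standard Abel-summation argument for Kronecker's Lemma. The paper does not actually supply a proof of this lemma---it simply states it as a classical fact and uses it---so there is nothing to compare against; your write-up would serve perfectly well as a self-contained justification.
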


\begin{lemma}\label{nowe6.11}
 Almost surely, we have
\begin{align*}
\limsup_{n\to\infty} \frac{X_n}{f^{-1}(\log n)} \geq (\lambda^\ast)^{-1/\rho}.
\end{align*}
\end{lemma}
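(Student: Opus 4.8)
The plan is to prove the matching lower bound for the $\limsup$ by a second-moment/conditional Borel–Cantelli argument, following the structure of the corresponding lower-envelope proof in \cite{BKT22}. Fix $\epsilon>0$; it suffices to show that a.s. $X_n > f^{-1}\!\left(\tfrac{\log n}{\lambda^\ast(1+\epsilon)}\right)$ infinitely often, since by regular variation of $f^{-1}$ (Lemma \ref{granica}(iv)) the right-hand side is asymptotically $\left((1+\epsilon)\lambda^\ast\right)^{-1/\rho} f^{-1}(\log n)$, and then letting $\epsilon\downarrow 0$ gives the claim. The idea is not to test at every index $n$ — consecutive $X_n$ are too strongly dependent and the events would not be ``spread out'' enough — but to pass to a sparse subsequence $(k_n)_n$ along which the relevant events become conditionally nearly independent, yet still occur infinitely often.

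First I would set $t_m = f^{-1}\!\left(\tfrac{\log m}{\lambda^\ast(1+\epsilon')}\right)$ for a suitable $\epsilon'<\epsilon$, and apply Lemma \ref{noweA.1} (with the $y_\ast\in(0,1)$ and $\tilde M$ coming from Lemma \ref{nowe6.7} applied with a small $\delta$) to obtain the sparse sequence $(k_n)_n$ satisfying \eqref{eq:kn} and the polynomial growth bound \eqref{eq:knbound}. Consider the events $A_n = \{X_{k_n} > t_{k_n}\}$, and let $\mathcal F_n = \sigma((A_j,B_j): j\le k_n)$, so $A_n\in\mathcal F_n$. The key estimate is a lower bound on $\P(A_{n+1}\mid \mathcal F_n)$: run the recursion from time $k_n$ to time $k_{n+1}$, a block of length $k_{n+1}-k_n$, starting from the $\mathcal F_n$-measurable value $X_{k_n}$. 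By Lemma \ref{nowe6.7} (in its stated conditional form, applied over this block of length $k_{n+1}-k_n$ with initial value $X_{k_n}$), provided $t_{k_{n+1}} y_\ast^{k_{n+1}-k_n-1} \ge \tilde M$ — which is exactly guaranteed by \eqref{eq:kn} — we get
\[
\P(A_{n+1}\mid \mathcal F_n) \ \ge\ \1_{\{X_{k_n} > t_{k_{n+1}} y_\ast^{\,k_{n+1}-k_n}\}}\,\exp\!\bigl(-(1+\delta)\lambda^\ast f(t_{k_{n+1}})\bigr).
\]
Now $\exp(-(1+\delta)\lambda^\ast f(t_{k_{n+1}})) = k_{n+1}^{-(1+\delta)/(1+\epsilon')}$, and choosing $\delta,\epsilon'$ so that $(1+\delta)/(1+\epsilon')<1$ (possible since we only need $<1$, and also $<$ whatever exponent makes the sum with \eqref{eq:knbound} diverge) makes $\sum_n k_{n+1}^{-(1+\delta)/(1+\epsilon')}=\infty$ after invoking \eqref{eq:knbound}. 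The remaining issue is the indicator $\1_{\{X_{k_n} > t_{k_{n+1}} y_\ast^{\,k_{n+1}-k_n}\}}$: since $c \ge t_{k_{n+1}} y_\ast^{k_{n+1}-k_n}$ is \emph{not} what \eqref{eq:kn} says (it bounds $t_{k_{n+1}}y_\ast^{k_{n+1}-k_n}$ between $\tilde M y_\ast$ and $c$), the threshold $t_{k_{n+1}}y_\ast^{k_{n+1}-k_n}$ stays bounded by the constant $c$; hence on the event $\{X_{k_n} > c\}$ — and more to the point, it suffices to have $X_{k_n}$ exceed a fixed constant infinitely often, which will be handled next — the indicator is $1$.

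To make the conditional Borel–Cantelli lemma (Lemma \ref{lem:borel}) bite, I would argue that $\{X_{k_n} > c\}$ happens for infinitely many $n$ a.s.: since $\operatorname{ess\,sup}(X)=\infty$ (this follows from $\lambda^\ast<\infty$ via Lemma \ref{lem:g(1)} and Lemma \ref{ess_supX}), and $X_m\to X$ in distribution, one has $\liminf_m \P(X_m>c) = \P(X>c)>0$; a soft argument — e.g. via Lemma \ref{6.10.} applied to a bounded continuous function approximating $\1_{(c,\infty)}$ from below, along the subsequence $(k_n)$ — shows $\frac1m\sum_{j=1}^m \1_{\{X_{k_j}>c\}}$ has positive $\limsup$ a.s., so $\{X_{k_n}>c\}$ i.o.\ a.s. Then, restricting to that full-measure event, $\sum_n \P(A_{n+1}\mid\mathcal F_n) \ge \sum_{n:\,X_{k_n}>c} k_{n+1}^{-(1+\delta)/(1+\epsilon')}$, which diverges because it omits only finitely... no — more care is needed: the sum is over the \emph{random} set $\{n: X_{k_n}>c\}$, which is infinite but could be sparse. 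The cleaner route, and the one I would actually follow, is the standard one from \cite{BKT22}: combine the indicator into the argument by noting $\{X_{k_{n}}>c\}\supseteq A_n$ once $t_{k_n}\ge c$, i.e.\ for $n$ large $A_n$ itself forces $X_{k_n}>c$, so $\1_{\{X_{k_n}>t_{k_{n+1}}y_\ast^{k_{n+1}-k_n}\}} \ge \1_{A_n}$ for large $n$; thus $\P(A_{n+1}\mid\mathcal F_n)\ge \1_{A_n}\,k_{n+1}^{-(1+\delta)/(1+\epsilon')}$. Feeding this into Lemma \ref{lem:borel}: if $A_n$ held only finitely often, then $\limsup A_n$ would be a null event, yet on $\limsup A_n$ the series $\sum\P(A_{n+1}\mid\mathcal F_n)$ diverges on the complement of finitely many $A_n$ failing... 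Here I would instead invoke the Kronecker/Lévy form directly as in \cite{BKT22}: set $a_n = \sum_{j\le n} k_{j+1}^{-(1+\delta)/(1+\epsilon')}\uparrow\infty$ and use Lemma \ref{lem:kron} together with Lemma \ref{lem:borel} to conclude $\1_{A_n}=1$ i.o.\ a.s.

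The main obstacle, and where the real work lies, is precisely the coupling of the indicator constraint with the divergence of the conditional-probability series: Lemma \ref{nowe6.7} only gives a useful lower bound on $\P(X_{k_{n+1}}>t\mid X_{k_n})$ \emph{on the event that $X_{k_n}$ already exceeds a threshold of order $t\,y_\ast^{k_{n+1}-k_n}$}, so the argument has a bootstrapping flavour — one needs $X_{k_n}$ large to conclude $X_{k_{n+1}}$ large — and the sequence $(k_n)$ from Lemma \ref{noweA.1} is engineered exactly so that this threshold stays bounded by the constant $c$ while $\sum_n \exp(-(1+\delta)\lambda^\ast f(t_{k_{n+1}}))$ still diverges (using \eqref{eq:knbound}). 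Assembling these two facts so that the conditional Borel–Cantelli lemma applies — i.e.\ showing $\sum_n \1_{A_n} k_{n+1}^{-(1+\delta)/(1+\epsilon')} = \infty$ a.s., equivalently that $A_n$ does not eventually stop — is the crux; everything else (the regular-variation bookkeeping converting $t_{k_n}$-thresholds back to $f^{-1}(\log n)$, and letting $\delta,\epsilon'\downarrow 0$) is routine. Once $\limsup_n X_{k_n}/f^{-1}(\log k_n) \ge ((1+\epsilon)\lambda^\ast)^{-1/\rho}$ is established along the subsequence, it is a fortiori true along all $n$, and letting $\epsilon\downarrow0$ finishes the proof; together with Lemma \ref{nowe6.2} this yields Theorem \ref{nowe6.1}.
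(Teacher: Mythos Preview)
Your overall architecture is exactly the paper's: sparse subsequence $(k_n)$ from Lemma \ref{noweA.1}, conditional lower bound from Lemma \ref{nowe6.7} yielding $\P(X_{k_{n+1}}>t_{k_{n+1}}\mid X_{k_n})\ge \1_{(c,\infty)}(X_{k_n})\,k_{n+1}^{-\gamma}$ with $\gamma=(1+\delta)/(1+\epsilon)<1$, then conditional Borel--Cantelli (Lemma \ref{lem:borel}). You also correctly isolate the crux: why does $\sum_n \1_{(c,\infty)}(X_{k_n})\,k_{n+1}^{-\gamma}$ diverge a.s., given that the random index set $\{n:X_{k_n}>c\}$ might be sparse?

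However, your resolution of this crux is where the gap lies. The bootstrap attempt via $\1_{A_n}$ is circular (you need $A_n$ i.o.\ to make the series diverge, which is precisely the conclusion sought), and your stated Kronecker application with $a_n=\sum_{j\le n}k_{j+1}^{-\gamma}$ does not produce the required divergence. The paper's actual argument applies Kronecker's Lemma with $a_n=k_{n+1}^{\gamma}$ and $x_n=\1_{(c,\infty)}(X_{k_n})$: the contrapositive says that if $\limsup_m k_{m+1}^{-\gamma}\sum_{n=1}^m \1_{(c,\infty)}(X_{k_n})>0$ then $\sum_n x_n/a_n=\sum_n \1_{(c,\infty)}(X_{k_n})\,k_{n+1}^{-\gamma}$ diverges. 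This is where the bound \eqref{eq:knbound} enters in a way you never quite use: $k_{m+1}^{\gamma}\le K(m+1)$ converts $k_{m+1}^{-\gamma}\sum_{n\le m}\1_{(c,\infty)}(X_{k_n})$ into $\ge K^{-1}(m+1)^{-1}\sum_{n\le m}\1_{(c,\infty)}(X_{k_n})$, whose $\limsup$ is at least $K^{-1}\P(X>2c)>0$ by the Lemma \ref{6.10.} step you already described. So the ``positive-density'' information from Lemma \ref{6.10.} is exactly strong enough, once fed through the \emph{right} Kronecker pairing, to force divergence of the conditional-probability series; your proposal has all the pieces on the table but wires this particular step incorrectly. (Also, the extra parameter $\epsilon'$ is unnecessary; the paper works directly with $\epsilon$.)
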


\begin{proof}
Fix $\epsilon>0$ and define 
\[
t_n = f^{-1}\left(\frac{\log n}{\lambda^\ast (1+\epsilon)}\right). 
\]
Let $(\mathcal{F}_n)_{n\geq 0}$ denote the natural filtration of the sequence $(X_n)_{n\geq 0}$. By Lemma \ref{lem:borel}, it suffices to show that for a strictly increasing sequence of positive integers $(k_n)_{n\geq 1}$,  we have, a.s.,
\[
\sum_{n=1}^\infty \P(X_{k_n} > t_{k_n}\mid \mathcal{F}_{k_{n-1}}) = \infty, 
\]
which implies that $\P(\limsup_{n\to\infty} A_{k_n} )=1$. Since 
$\limsup_{n\to\infty} A_n \supset \limsup_{n\to\infty} A_{k_n}$, this would prove the assertion.  

 Since $(X_{k_n})_{n\geq 1}$ is a Markov chain, we have
\[
\P(X_{k_{n+1}}>t_{k_{n+1}}\mid \mathcal{F}_{k_{n}}) = \P(X_{k_{n+1}}>t_{k_{n+1}}\mid X_{k_{n}}).   
\]
By Lemma \ref{nowe6.7}, applied to the sequence $(Y_n)_{n\in\mathbb{N}\cup\{0\}}$ defined by $Y_{i}=X_{k_n+i}$, we obtain that for any $\delta>0$, there exist $y_\ast \in (0,1)$ and $\tilde{M}>0$ such that 
    \begin{align} \label{8.11.1}
        \P(X_{k_{n+1}} > t \mid X_{k_n}) \geq \1_{(ty_*^{k_{n+1}-k_n}, \infty)}(X_{k_n})\exp(-(1+\delta)\lambda^\ast f(t)),
    \end{align}
    provided $ty_*^{k_{n+1}-k_n-1} \geq \tilde{M}$.

We use the sequence $(k_n)_{n\geq 1}$ from Lemma \ref{noweA.1}. The condition $t_{k_{n+1}}y_*^{k_{n+1}-k_n-1} \geq \tilde{M}$ is satisfied as a consequence of the lower bound in \eqref{eq:kn}. Therefore, using \eqref{8.11.1} and the upper bound in \eqref{eq:kn}, we obtain, a.s.,
\begin{align*}
\P(X_{k_{n+1}} > t_{k_{n+1}}\mid X_{k_n}) &\geq \1_{(c, \infty)}(X_{k_n})\exp\left(-\tfrac{1+\delta}{1+\epsilon}\log k_{n+1}\right) = \1_{(c, \infty)}(X_{k_n})\frac{1}{k_{n+1}^\gamma},
\end{align*}
where $\gamma := (1+\delta)/(1+\epsilon)$. By decreasing $\delta$ if necessary, we ensure that $\gamma < 1$. Our goal is to show that $\sum_{n\geq 1}  \1_{(c, \infty)}(X_{k_n})k_{n+1}^{-\gamma}$ diverges a.s. By Lemma \ref{lem:kron}, applied to $a_n = k_{n+1}^\gamma$ and $x_n =\1_{(c, \infty)}(X_{k_n})$, to meet this goal, it suffices to show that, a.s.,
\begin{align}\label{eq:goal}
 \limsup_{m \to\infty}\frac{1}{k_{m+1}^\gamma}\sum_{n=1}^m \1_{(c, \infty)}(X_{k_n})>0. 
\end{align}
From \eqref{eq:knbound}, we have $k_n^\gamma \leq Kn$ for some $K>0$ and all $n \geq 1$. Therefore, a.s.,
\begin{align*}
    \limsup_{m \to\infty}\frac{1}{k_{m+1}^\gamma}\sum_{n=1}^m \1_{(c, \infty)}(X_{k_n}) \geq
    \limsup_{m \to\infty}\frac{K^{-1}}{m+1}\sum_{n=1}^m \1_{(c, \infty)}(X_{k_n}).
\end{align*}
Let $f_c\colon \R \rightarrow \R$ be defined by $f_c(x) =(x-c)/c \1_{[c,2c]}(x)+\1_{(2c,\infty)}(x)$.
Since $f_c$ is bounded and uniformly continuous on $\R$, and since $\1_{(c,\infty)}\geq f_c\geq \1_{(2c,\infty)}$, by Lemma \ref{6.10.} we have
\begin{align*}
    \limsup_{m \to\infty}\frac{1}{m+1}\sum_{n=1}^m \1_{(c, \infty)}(X_{k_n}) & \geq \limsup_{m \to\infty}\frac{1}{m+1}\sum_{n=1}^m f_c(X_{k_n})    \geq \E[f_c(X)] \geq \P(X>2c).
\end{align*}
Since $\P(X>t)>0$ for any $t\in\R$ (recall that under $\lambda^\ast<\infty$, by Lemmas \ref{ess_supX} and \ref{lem:g(1)} , we have $\ess\sup(X)=\infty$), \eqref{eq:goal} follows.
    Therefore, by Lemma \ref{lem:borel}, for every $\epsilon>0$, we have almost surely
    \begin{align*}
\limsup_{n \to\infty} \frac{X_n}{f^{-1}(\frac{\log n}{\lambda^\ast (1+\epsilon)})} \geq 1.
\end{align*}
Proceeding as in the proof of Lemma \ref{nowe6.2}, we obtain
\begin{align*}
    \limsup_{n \to\infty} \frac{X_n}{ f^{-1}(\log n)} \geq \left(\frac{1}{\lambda^\ast (1+\epsilon)}\right)^{\frac{1}{\rho}}.
\end{align*}
The proof is completed by letting $\epsilon \to 0^+$.
\end{proof}

\begin{proof}[Proof of Theorem \ref{nowe6.1}] 
This follows directly from Lemmas \ref{nowe6.2} and \ref{nowe6.11}.
\end{proof}

\section{Example}\label{sec:ex}
In this section, we present a family of distributions of $(A,B)$, which admit an explicit $\LDM$, but is not PQD. Thus, allowing us to illustrate our results on explicit example outside PQD pairs. This family is indexed by a continuous, nondecreasing function $\alpha\colon [0,1)\mapsto [1,\infty)$.

Let $\rho>0$, and consider a random vector $(A,B)$
whose law consists of a unique atom and an absolutely continuous component. Specifically, assume that
$\P(A=0, B=1) = 1 - e^{-1}$,
and that 
\[
\P(A>a, B>b) = \exp\left(-\alpha(a) \beta(b)\right),\qquad a\in[0,1), b\geq 1,
\]
with 
\[
\beta(b)=b^\rho \quad \text{and} \quad \ess\sup(A)=1.
\]
Since $e^{-1}=\P(A>0, B>1)=\exp(-\alpha(0)\beta(1))$, we deduce that $\alpha(0)=1$. In addition, one may verify that $\alpha(1^-)=\infty$, that $\alpha(a) = -\log\P(A>a)$ for $a\geq 0$ and that $\log\P(B>b) = - b^\rho$ for $b\geq 1$. 

Our goal is to compute the $\LDM^\rho_f$ function for $(A,B)$ with $f(t)=-\log\P(B>t)=t^\rho$. 
 Note that $A$ and $B$ are not positively quadrant dependent, so that the results of Section \ref{sec:PQD} do not apply.
 
 By Lemma \ref{3} (i), we have $g(y)=0$ for $y>1$. Fix $y\in[0,1)$ and consider $t>1/(1-y)$. Then,
\begin{align*}
  \P(Aty+B>t) &= \int_0^1 \int_{t(1-y a)}^\infty \frac{\partial^2 \exp(-\alpha(a)\beta(b))}{\partial a\,\partial b}  \,\dd b\,\dd a \\
  & = - \int_0^1 \frac{\partial \exp(-\alpha(a)\beta(b))}{\partial a}\big|_{b=t(1-y a)}  \dd a 
  \\
  & =t^\rho \int_{0}^{1} (1-y a)^\rho e^{-t^\rho (1-y a)^\rho\alpha(a)} \alpha'(a) \dd a \\
&  = t^\rho \int_{1}^{\infty} (1-y \alpha^{-1}(u))^\rho e^{-t^\rho (1-y \alpha^{-1}(u))^\rho u} \dd u,
\end{align*}
where the substitution $u=\alpha(a)$ was used (with $\alpha^{-1}$ denoting the generalized inverse of $\alpha$).  Since $(1-y)^\rho\leq (1-y \alpha^{-1}(u))^\rho \leq 1$, we obtain 
\[
(1-y)^\rho t^\rho \int_{1}^{\infty}  e^{-t^\rho (1-y \alpha^{-1}(u))^\rho u} \dd u \leq  \P(Aty+B>t) \leq t^\rho \int_{1}^{\infty}  e^{-t^\rho (1-y \alpha^{-1}(u))^\rho u} \dd u.
\]
By the Laplace method, it follows that $g$ exists and equals for $y\in(0,1)$,
\[
g(y) = \lim_{t\to\infty} \frac{-\log\P(Aty+B>t)}{t^\rho} = \inf_{u\in[1,\infty)} \{ (1-y \alpha^{-1}(u))^\rho u \} = \inf_{a\in[0,1)}\left\{ (1-y a)^\gamma  \alpha(a) \right\}.
\]
We obtain $g(0)=\alpha(0)=1$ and
\begin{multline*}
\phi_{\rho}(\lambda) = \inf_{y>0} \left\{y^{\rho}\lambda+g(y)\right\}\\
=
\begin{cases}
    \min\{1,\lambda\}, & \rho\in(0,1], \\
    \min\left\{\inf_{a\in[0,1)}\left\{\left(\alpha(a)^{1/(1-\rho)}+a^{\rho/(\rho-1)} \lambda^{1/(1-\rho)}\right)^{1-\rho}\right\},\lambda\right\}, & \rho>1.
\end{cases}
\end{multline*}
Moreover, one can easily show that
\begin{multline*}
\lambda^\ast = \inf_{y\in(0,1)}\left\{\frac{g(y)}{1-y^{\rho}}\right\} = \inf_{a\in[0,1)} \inf_{y\in(0,1)}\left\{\frac{\alpha(a)(1-y a)^\rho }{1-y^{\rho}}\right\} \\
= \begin{cases}
 \inf_{a\in[0,1)}\left\{\alpha(a)\right\}=1, & \rho\in(0,1],\\
 \inf_{a\in[0,1)} \left\{
 \alpha(a) (1-a^{\rho/(\rho-1)})^{\rho-1}
 \right\}, & \rho>1.
\end{cases}
\end{multline*}

Consider the case $\rho>1$. Then, if $X$ is a solution to \eqref{eq:affine}, by Theorem \ref{theorem4.1}, we obtain
\begin{align}\label{eq:our}
\lim_{t\to\infty} \frac{\log\P(X>t)}{t^\rho} = -  \inf_{a\in[0,1)} \left\{
 \alpha(a) \left(1-a^{\rho/(\rho-1)}\right)^{\rho-1}
 \right\}.
\end{align}
We are going to relate this result to the findings of \cite{BK18}. In \cite[Theorem 5.1]{BK18}, it was shown that
\begin{align}\label{eq:prev}
\liminf_{t\to\infty} \frac{\log\P(X>t)}{h(t)} \geq -
\left( s \left(1-\left(1-\frac1{s}\right)^{\gamma/(\gamma-1)}\right)\right)^{\gamma-1},
\end{align}
where 
\begin{align*}
h(t) = \inf_{s\geq 1}\left\{
-s\log\P\left( A>1-\frac1s, B>\frac{t}{s}\right)
\right\},
\end{align*}
$\gamma$ is the index of regular variation of $h$ and $s=\lim_{t\to\infty}\sigma(t)$, where $\sigma$ is any function satisfying 
\[
h(t) = -\sigma(t)\log\P\left( A>1-\frac{1}{\sigma(t)}, B>\frac{t}{\sigma(t)}\right)+o(1),\qquad t\to\infty. 
\]
In our setting, for $t>1$ one can write
\begin{align*}
h(t) =  \inf_{s\in[1,t]}\left\{ s\,\alpha\left(1-\frac1s\right)\beta\left(\frac ts\right)
\right\} = t^\rho  \inf_{s\in[1,t]}\left\{ s^{1-\rho}\, \alpha\left(1-\frac1s\right)
\right\}.
\end{align*}
Now, we consider two cases: 
\begin{itemize}
    \item[(a)] $\alpha(a) = (1-a)^{1-\rho}$ with $\rho>1$, 
    \item[(b)] $\alpha(a) = \exp(a/(1-a))$ with $\rho>2$.
\end{itemize}
In case (a) one immediately obtains $h(t) = t^\rho$ for $t>1$. In this situation, one can take $\sigma(t)=\min\{s,t\}$ for any $s\geq 1$. Consequently, $\gamma=\rho$ and \eqref{eq:prev} becomes (after taking $\sup_{s\geq 1}$ of both sides)
\[
\liminf_{t\to\infty} \frac{\log\P(X>t)}{t^\rho} \geq - \inf_{s\geq 1} \left\{ 
\left[s \left(1-\left(1-\frac1{s}\right)^{\rho/(\rho-1)}\right)\right]^{\rho-1}\right\}.
\]
A direct calculation shows that this lower bound agrees with the expression in \eqref{eq:our}.

Case (b). 
Then,
\[
h(t) = t^\rho \inf_{s\in[1,t]}\left\{ s^{1-\rho} e^{s-1} \right\}.
\]
For $t>\rho-1>1$,  the infimum is attained at $s=\rho-1$, yielding 
\[
h(t) = t^\rho (\rho-1)^{1-\rho}e^{\rho-2}.
\]
Again, one  obtains $\gamma=\rho$, and by taking $\sigma(t)=\rho-1=s$, \eqref{eq:prev} gives
\begin{align*}
\liminf_{t\to\infty} \frac{\log\P(X>t)}{t^\rho} & = \liminf_{t\to\infty} \frac{\log\P(X>t)}{h(t)}\cdot\frac{h(t)}{t^\rho}  \\
&\geq -\left( (\rho-1) \left(1-\left(1-\frac1{\rho-1}\right)^{\rho/(\rho-1)}\right)\right)^{\rho-1}\cdot
(\rho-1)^{1-\rho}e^{\rho-2} \\
&=-e^{a/(1-a)} \left(1-a^{\rho/(\rho-1)}\right)^{\rho-1}\big|_{a=1-(\rho-1)^{-1}}.
\end{align*}
On the other hand, by \eqref{eq:our}, we have
\begin{align*}
\lim_{t\to\infty} \frac{\log \P(X>t)}{t^\rho} &= -\inf_{a\in[0,1)}\left\{ e^{a/(1-a)} \left(1-a^{\rho/(\rho-1)}\right)^{\rho-1}\right\} \\
& >- e^{a/(1-a)} \left(1-a^{\rho/(\rho-1)}\right)^{\rho-1}\big|_{a=1-(\rho-1)^{-1}}.
\end{align*}
Thus, the lower bound provided by Theorem 5.1 in \cite{BK18} is, in general, not optimal.

\bibliographystyle{plain}
\bibliography{Bibl}

\end{document}